\numberwithin{equation}{section}
\newtheorem{lemma}{Lemma}[section]
\newtheorem{theorem}[lemma]{Theorem}
\newtheorem{proposition}[lemma]{Proposition}
\newtheorem{remark}{Remark}
\newtheorem{hypothesis}{Hypothesis}
\newcommand{\thishypname}{}
\newtheorem*{generichypothesis}{\thishypname}
\newcommand{\eps}{\varepsilon}
\newcommand{\R}{\mathbb{R}}
\newcommand{\C}{\mathbb{C}}
\newcommand{\dd}{\, {\rm d}}
\newcommand{\cM}{\mathcal{M}}
\newcommand{\cP}{\mathcal{P}}
\newcommand{\cL}{\mathcal{L}} 
\newcommand{\wv}{\lfloor v \rceil} 
\newcommand{\wdot}{\lfloor \cdot \rceil} 
\newcommand{\cS}{\mathcal{S}} 
\newcommand{\cE}{\mathcal{E}} 
\newcommand{\cN}{\mathcal{N}}
\newcommand{\pa}{\partial}
\newcommand{\im}{\operatorname{Im}}
\newcommand{\re}{\operatorname{Re}}
\newcommand{\cC}{\mathcal{C}} 
\newcommand{\ve}{\varepsilon}
\title{Quantitative fluid approximation in fractional regimes of transport equations with more invariants}
\author{\'Emeric Bouin}
\email{bouin@ceremade.dauphine.fr}
\address{CEREMADE - Universit\'e Paris-Dauphine, PSL Research University, UMR CNRS 7534, Place du Mar\'echal de Lattre de Tassigny, 75775 Paris Cedex 16, France.}
\author{Laura Kanzler}
\email{laura.kanzler@dauphine.psl.eu}
\address{CEREMADE - Universit\'e Paris-Dauphine, PSL Research University, UMR CNRS 7534, Place du Mar\'echal de Lattre de Tassigny, 75775 Paris Cedex 16, France.}
\author{Cl\'ement Mouhot}
\email{c.mouhot@dpmms.cam.ac.uk}
\address{University of Cambridge, Wilberforce Road, Cambridge CB3 0WA, United Kingdom}
\date{\today}
\subjclass[2010]{60J60,35Q84,82C40,35B27,60K50,60G52,76P05}
\keywords{transport process; kinetic theory; anomalous diffusion;
  scattering operator; Fokker-Planck operator; Lévy-Fokker-Planck
  operator; spectral theory}
\begin{document}
\begin{abstract}
We present an extension of results in a previous paper by the first and the last author [PMP, 2022] about macroscopic limits of linear kinetic equations in (potentially) \emph{fractional regimes}. More precisely, we develop a unified framework inspired by Ellis and Pinsky [J. Math. Pures Appl., 1975] for operators that preserve mass, momentum and energy, and have microscopic \emph{equilibrium with heavy tails} (typically polynomial). This paper also generalizes one of Hittmeir and Merino [KRM, 2016] in a related framework. The main difficulty, that leads to our main contribution, is the understanding of the spectrum of the generator in the Fourier space, which is significantly complicated by the lack of spectral gap and the fat tails of the equilibrium. Indeed, the scaling of the eigenelements in the suitable macroscopic rescaling is subtle to handle. In particular, our study uncovered an interesting difference in scaling in the fractional regime, where the transversal wave eigenvalues converge faster to zero than the Boussinesq and acoustic wave eigenvalues.
\end{abstract}

\maketitle

\tableofcontents

\section{Introduction and main results}\label{s:intro}

In mathematical physics, the study of \emph{linear transport equations} serves as a powerful tool to describe and gain understanding of physical processes where particles are transported within a medium, while they undergo scattering, random absorption and emission. It's investigation was initiated by pioneering works in the 19th century by Maxwell and his mean-free path argument \cite{maxwell1860process} and by Maxwell and Boltzmann and their kinetic theory of gases \cite{boltzmann1872, maxwell1867iv}. More recently, important applications can be found in \emph{radiative transfer theory} \cite{pomraning1973}, \emph{nuclear reactor theory} \cite{MR0113336}, and \emph{semi-conductor theory} \cite{MR1063852}. The main mathematical object of study in \emph{transport theory} is the linear equation
	\begin{equation} \label{eq:kinetic}
		\partial_t f + v\cdot \nabla_x f = \cL f
	\end{equation}
	on the time dependent density of particles $f=~f(t,x,v) \ge 0$
	over $(x,v) \in \R^3 \times \R^3$, for $t \ge 0$. Although the methods and proofs presented in this article also apply to general dimensions $d>1$, we chose to set $d=3$ throughout the whole article to ease readability. The left-hand-side accounts for free motion and the right-hand-side accounts for the interaction with a background, for instance scatterers, with an operator $\cL$ that acts only on the kinetic variable $v$. The macroscopic moments are defined by 
	\begin{equation*}
		\rho_f = \int_{\R^3} f(v) \, \dd v, \qquad m_f = \int_{\R^3} v f(v) \dd v, \qquad \theta_f := \int_{\R^3}  \frac{|v|^2-3}{3}  f(v) \, \dd v.
	\end{equation*}
	In a regime, where relevant time and space scales of observation are much larger than the mean free time and mean free path of a particle, it is thus natural to search for a simplified regime than given by equation \eqref{eq:kinetic}. After an appropriate rescaling in space and time it is well known that a system of diffusive equations can be obtained in the limit, which was initiated by \emph{Hilbert's 6-th problem} \cite{H}, which he theorised abstractly with the introduction of Hilbertexpansions \cite{H2}, and followed by numerous contributions, see e.g. \cite{S-R, GS-R, BU, BLP, Ellis-Pinsky} and references therein.
	
	In this work, we aim to investigate the limit behaviour of a solution to equation \eqref{eq:kinetic} for vanishing mean free path and carefully chosen time scale. Since we expect the solution to become proportional to the equilibrium $\cM$ in the large space-time scaling, we first rewrite the equation in terms of $h:=\frac{f}{\cM}$:
	\begin{equation}\label{eq:hkinetic}
		\pa_t h + v \cdot \nabla_x h  = Lh, 
	\end{equation}
	with
	\begin{equation}\label{d:L}
		L h (v):= \cM^{-1}(v) \cL (\cM h)(v).
	\end{equation}
	Throughout this article we use the following notations regarding the functional spaces we are working in:
	\begin{itemize}
		\item We consider the complex Hilbert spaces $L_v^2\left(\cM\right):=L^2\left(\R^3;\cM \dd v\right)$ and $L_{x,v}^2\left(\cM\right):=L^2\left(\R^6;\cM \dd x \, \dd v\right)$.
		\item For a $h \in L^2(\cM)$, we denote $\| h \|_k:=\left\|(1+|\cdot|^2)^{\frac{k}{2}}h\right\|_{L^2(\cM)}$, where we precise the integration variables when needed. We omit the index $k$, when $k=0$.
		\item We use the notation $\wv := \sqrt{1+|v|^2}$ for describing the polynomial weights.
		\item For $h_1, h_2 \in L^2(\cM)$, we define the weighted inner product 
			$$
				\langle h_1,\, h_2 \rangle_{k}:= \int_{\R^3} h_1 \overline{h_2} \wv^{k} \cM \dd v\,.
			$$
			When $k=0$ the index will be omitted as well.
		\item Last, we want to remind the definition of the \emph{fractional Laplace} operator using the Fourier Transform $\mathcal{F}$
		 	$$
				\mathcal{F}\left(-\Delta^{\frac{\zeta}{2}}h\right)(\xi) = |\xi|^{\frac{\zeta}{2}}\mathcal{F}(h)(\xi)\,.
			$$	
	\end{itemize}
	We make the following assumptions for some constants $\alpha, \beta \in \R$ fulfilling the conditions
	\begin{align}\label{c:constants}
	\alpha>5\,, \,\, \alpha + \beta >4\,, \,\, \beta >-1\,, \,\, \text{and } \lambda >0\,.
	\end{align}
	
	\begin{hypothesis}[Equilibria]
		\label{hyp:equilibira}
		The microscopic equilibrium $\cM$ of \eqref{eq:kinetic} takes one of the following two forms.
		\begin{itemize}
			\item[(i)] Either it is given by
			\begin{equation}
				\label{eq:Mpoly}
				\cM(v) = c_{\alpha,\beta} \wv^{-(3+\alpha)} \quad \text{ with }
				c_{\alpha,\beta} := \left( \int_{\R^3} \wv^{-3-\alpha-\beta} \dd v
				\right)^{-1}\,.
			\end{equation}
		\end{itemize}
		\begin{itemize}
			\item[(ii)] Or it is a smooth positive radially symmetric
			function decaying faster than any polynomial. The latter case
			is denoted by `$\alpha = +\infty$' in the sequel.
		\end{itemize}
	\end{hypothesis}
	
	We assume the following normalisations
		\begin{equation}\label{eq:Mnorm}
			\int_{\R^3} \wv^{-\beta} \cM \, \dd v = 1, \quad \int_{\R^3} v_i^2  \wv^{-\beta}  \cM \, \dd v = 1, \quad \int_{\R^3} v_i^2 |v|^2  \wv^{-\beta}  \cM \, \dd v = 5, \, \quad i \in \{1,2,3\},
		\end{equation}
	which mean no loss of generality under assumption that $\alpha+\beta>4$. The results in this article are presented assuming that the equilibrium $\cM$ is given by the exact formula \eqref{eq:Mpoly}, which eases notation and calculations. However, we assume that the results remain true for more general equilibria with the signifiant polynomial decay at infinity.
		
	\begin{hypothesis}[Weighted coercivity]
		\label{hyp:coercivity}
		The operator $L$ (defined in \eqref{d:L}) is linear, only acting in the velocity variable $v$ and commuting with rotations in $v$. Moreover, it is closely and densely on $\operatorname{Dom}(L) \subset L_v^2(\cM)$ and satisfies $L\left(w(v)\right) = L^*\left(w(v)\right)=0$ for $w(v) \in \operatorname{span}\left\{1,v,|v|^2\right\}$.  The weighted operator $\tilde{L} (\cdot) := \wdot^{\frac{\beta}{2}} L (\wdot^{\frac{\beta}{2}} \cdot)$ is closed densely defined on $\operatorname{Dom}(\tilde{L}) \subset L_v^2(\cM)$, with the spectral gap estimate
		\begin{equation*}
			- \operatorname{Re} \langle \tilde{L} g,\, g \rangle \geq \lambda \|g\|^2, \quad \text{for all} \quad g \in \operatorname{Dom}(\tilde{L}), \, g \perp \wdot ^{-\frac{\beta}{2}}\operatorname{span}\left\{1,v,|v|^2\right\}.
		\end{equation*}
	\end{hypothesis}
	
	By $\cN(L)$ we denote the null-space of the operator $L$ in $L_{x,v}^2\left(\cM\right)$, which is spanned by $\{1,v,|v|^2\}$. It can be seen easily that due to the normalization conditions \eqref{eq:Mnorm} the projector
	\begin{align}
		\cP F : = \rho_{\wv^{-\beta}, F} + v \cdot m_{\wv^{-\beta}, F} + \frac{|v|^2-3}{2} \theta_{\wv^{-\beta}, F},
	\end{align}
	defines the $L^2\left(\wv^{-\beta} \cM\right)$-orthogonal projection onto $\cN(L)$, where we denote
	\begin{align*}
		 &\rho_{\wv^{-\beta}, F}:= \int_{\R^3} F \wv^{-\beta} \cM \, \dd v, \quad m_{\wv^{-\beta}, F} :=\int_{\R^3} v F \wv^{-\beta} \cM \, \dd v, \\
		 &\theta_{\wv^{-\beta}, F}:= \int_{\R^3} \frac{|v|^2-3}{3} F \wv^{-\beta}  \cM \, \dd v\,.
	\end{align*}
	
	\begin{hypothesis}[Amplitude Estimates]
		\label{hyp:amp}
		Moreover, the amplitude of collisions at large velocities can be controlled in the following way. We denote by $0\leq \chi^1 \leq 1$ a smooth function, such that $\chi^1 \equiv 1$ in $B_1(0)$,  $\chi^1 \equiv 0$ outside $B_2(0)$ and by $0\leq \chi^2 \leq 1$ a smooth function such that $\chi^2 \equiv 0$ in $B_1(0)$ and outside $B_4(0)$,  as well as $\chi^2 \equiv 1$ in $B_3(0)\setminus B_2(0)$. Then the following holds
		\begin{align}\label{amplitudes1}
			&\left\| L(\chi_R^1) \right\|_{L^2(\wv^{\beta} \cM)} \lesssim R^{-\frac{\alpha+\beta}{2}}\,, \\
			&\left\| L(v \chi_R^1) \right\|_{L^2(\wv^{\beta} \cM)} \lesssim R^{1-\frac{\alpha+\beta}{2}}\,, \\
			&\left\| L\left((|v|^2 -3)\chi_R^1\right) \right\|_{L^2(\wv^{\beta} \cM)} \lesssim R^{2-\frac{\alpha+\beta}{2}}\,,
		\end{align}
		where we want to point out, that  $v \in \R^3$ is a velocity vector and the corresponding estimate has to be interpreted component-wise. Moreover, for any function $r_k : \R^3 \to \R^{d}$, such that $|r_k(v)| \leq c_k |v|^k$, for $k \in \mathbb{N}$, $c_k >0$ one can estimate
		\begin{align}\label{amplitudes2}
			&\left\| L(r_k(v)\wv^{\beta}\chi^2_R) \right\|_{L^2(\wv^{\beta} \cM)} \lesssim \begin{cases} &\ln{\left(R\right)}^{\frac{1}{2}}\,, \quad  k = \frac{\alpha-\beta}{2} \\ &R^{k-\frac{\alpha-\beta}{2}}\,, \quad  \text{else}\,,\end{cases}\,,
		\end{align}
		 where we used the notation $\chi_R^{1,2}(\cdot):=\chi^{1,2}\left(\cdot/R\right)$, for a constant $R\geq1$.
	\end{hypothesis}
	
	\begin{hypothesis}[Decomposition]\label{hyp:decomp}
		We assume that $\tilde{L}(\cdot) := \wv^{\frac{\beta}{2}}L\left(\wv^{\frac{\beta}{2}} \cdot \right)$ can be written as 
		\begin{align}
			\tilde{L} = \mathcal{K} - 1\,,
		\end{align}
		where $\mathcal{K}$ is a compact operator in $L^2(\cM)$. 
	\end{hypothesis}
	\begin{remark}
		In general it can be assumed that $\tilde{L} = \mathcal{K} - \bar{\nu} $, where $\bar{\nu} \,: \R^3 \to \R_+$ is a positive, smooth and bounded function, which we simplified to the identity for the ease of notation.
	\end{remark}
	
	Performing now for an $\ve \ll 1$ the rescaling $x \mapsto \ve^{-1} x$, in space and $t \mapsto \gamma(\ve)^{-1} t$ in time as well as
	\begin{align*}
		f_\ve(t,x,v) = f\left(\frac{t}{\gamma(\ve)}, \frac{x}{\ve},v\right), \quad h_\ve(t,x,v) = h\left(\frac{t}{\gamma(\ve)}, \frac{x}{\ve},v\right),
	\end{align*}
	equation \eqref{eq:kinetic} reads as 
	\begin{equation}\label{eq:kineticeps}
			\gamma(\ve) \partial_t f_\ve + v\cdot \nabla_x f_\ve = \cL f_\ve,
	\end{equation}
	with initial conditions to be \emph{well-prepared}, such to fulfil
	\begin{equation}\label{eq:iceps}
		f_{\ve,I}(x,v) = \ve^{-3}f_I\left(\frac{x}{\ve},v\right) \in L_{x,v}^2(\cM^{-1}), \quad \nabla \cdot \int_{\R^3} v f_{\ve,I} \, \dd v = 0,
	\end{equation}
	and such that the fluid limit holds at $t=0$, i.e. $f_{\ve,I}(x,v) \sim v \cdot m_{f_{\ve,I}(x,v)} +\frac{|v|^2-3}{3}\theta_{f_{\ve,I}(x,v)}$. The corresponding rescaled equation in terms of $h$ \eqref{eq:hkinetic} has the following form after rescaling
	\begin{equation}\label{eq:hkineticeps}
		\gamma(\ve)\pa_t h_\ve + \ve v \cdot \nabla_x h_\ve = L h_\ve.
	\end{equation}
	Under these conditions we are able to state our main theorem:
	
	\begin{theorem}[Macroscopic limit]\label{t:main}
		Let Hypothesis \ref{hyp:equilibira}- \ref{hyp:coercivity} - \ref{hyp:amp} be fulfilled and $\alpha + \beta >4$, $\alpha>5$ and $\beta > -1$. And let the initial conditions be well-prepared and fulfil \eqref{eq:iceps}. 
		\begin{itemize}
		\item If $\alpha < 6 + \beta$, then a solution $f_\ve$ to \eqref{eq:kineticeps}-\eqref{eq:iceps} converges towards
		\begin{align*}
			f_\ve \longrightarrow \cM \left(v \cdot m + \frac{|v|^2 - 5}{2} \theta \right), \quad \text{as} \quad \ve \to 0\,, \quad \text{weak in} \quad L^{2}\left([0,T];H^{-\zeta}_xL^2_v(\wv^{-\beta}\cM)\right)\,,
		\end{align*}
		where the macroscopic moments are determined by
		\begin{align*}
			&\rho + \theta = 0, \quad \nabla \cdot m = 0\,, \\ 
			&\pa_t \theta = \kappa_1 \Delta^{\frac{\zeta}{2}} \theta\,, \\
			&\pa_t m = \nabla p\,, \\
			&m(x) = m_I(x), \quad \theta(0,x) = \theta_I(x),
		\end{align*}
		with
		\begin{align*}
			\zeta := \frac{-4+\alpha + \beta}{1+\beta},
		\end{align*} 
		pressure $p$ and diffusion constant $\kappa_1 >0$. 
		\item If $\alpha > 6 + \beta$, then a solution $f_\ve$ to \eqref{eq:kineticeps}-\eqref{eq:iceps} converges towards
		\begin{align*}
			f_\ve \longrightarrow \cM \left( v \cdot m + \frac{|v|^2 - 5}{2} \theta \right), \quad \text{as} \quad \ve \to 0\,, \quad \text{weak in} \quad L^{2}\left([0,T];H^{-2}_xL^2_v(\wv^{-\beta}\cM)\right)
		\end{align*}
		where the macroscopic quantities are determined by
		\begin{align*}
			&\rho + \theta = 0, \quad \nabla \cdot m = 0, \\
			&\pa_t m = \nabla \cdot (A \nabla m) + \nabla p, \\
			&\pa_t \theta = \kappa_2 \Delta \theta, \\
			&m(0,x) = m_I(x), \quad \theta(0,x) = \theta_I(x),
		\end{align*}
		with pressure $p$, diffusion coefficient $\kappa_2>0$ and diffusion matrix $A$.
		\end{itemize}
	\end{theorem}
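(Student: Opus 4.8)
The plan is to work in Fourier space in the position variable $x$. Writing $\widehat{\cdot}$ for that transform, equation \eqref{eq:hkineticeps} becomes, for each frequency $\xi\in\R^3$,
\begin{equation*}
	\gamma(\ve)\,\pa_t\widehat{h}_\ve(t,\xi,\cdot) = \cL_{\ve\xi}\,\widehat{h}_\ve(t,\xi,\cdot), \qquad \cL_\eta := L - i\,(v\cdot\eta),
\end{equation*}
so everything reduces to understanding the rescaled semigroup $e^{t\gamma(\ve)^{-1}\cL_{\ve\xi}}$ as $\ve\to 0$. First I would record the uniform a priori bounds: testing \eqref{eq:hkineticeps} against $h_\ve\cM$ and using Hypothesis \ref{hyp:coercivity} through the substitution $h=\wv^{-\beta/2}g$ gives $\|h_\ve\|_{L^\infty_t L^2_{x,v}(\cM)}\lesssim\|h_{\ve,I}\|_{L^2_{x,v}(\cM)}$ together with the microscopic dissipation estimate $\|(\Id-\cP)h_\ve\|_{L^2_{t,x,v}(\wv^{-\beta}\cM)}\lesssim\gamma(\ve)^{1/2}$; this fixes the weighted space $L^2_v(\wv^{-\beta}\cM)$ in the statement and gives the compactness needed to extract a limit of $\cP h_\ve$.

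The heart of the proof is the spectral description of $\cL_\eta$ near $\eta=0$. Using Hypothesis \ref{hyp:decomp} ($\tilde L=\cK-1$ with $\cK$ compact), the explicit resolvent of the multiplication operator $-1-i(v\cdot\eta)$, and an analytic Fredholm argument, I would show that for $|\eta|\le\eta_0$ the spectrum of $\cL_\eta$ in the half-plane $\{\Re z>-\lambda/2\}$ consists of exactly five eigenvalues $\mu_1(\eta),\dots,\mu_5(\eta)$ bifurcating from the fivefold eigenvalue $0$ of $L=\cL_0$ on $\cN(L)=\operatorname{span}\{1,v_1,v_2,v_3,|v|^2\}$, the rest of the spectrum staying in $\{\Re z\le-\lambda/2\}$ by Hypothesis \ref{hyp:coercivity}. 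Since $L$ commutes with rotations I would then block-diagonalise along $\hat\eta:=\eta/|\eta|$: the kernel splits into a longitudinal part $\operatorname{span}\{1,v\cdot\hat\eta,|v|^2\}$, producing the two acoustic modes $\mu_\pm(\eta)=\pm i c_s|\eta|+\dots$ (with $c_s=\sqrt{5/3}$ obtained by diagonalising $\cP(-iv\cdot\hat\eta)\cP$, whose kernel is the incompressible subspace $\{\rho+\theta=0,\ \nabla\cdot m=0\}$) and one Boussinesq mode $\mu_0(\eta)$ with eigenvector $\propto|v|^2-5$, and a transversal part $\operatorname{span}\{v\cdot e:e\perp\hat\eta\}$ producing the shear modes $\mu_\perp(\eta)$; on the latter the first-order term vanishes by parity, so $\mu_0,\mu_\perp=O(|\eta|^2)$ to leading order.

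The hard part, and the source of the dichotomy, will be the next-order expansion of $\mu_0$ and $\mu_\perp$. Formally these dissipative parts are the quadratic forms $\langle(v\cdot\hat\eta)\,L^{-1}(\Id-\cP)(v\cdot\hat\eta)\psi,\psi\rangle_{\wv^{-\beta}\cM}$ evaluated on the Boussinesq, resp. shear, eigenvector $\psi\in\cN(L)$. Because $L=\wv^{-\beta/2}(\cK-1)\wv^{-\beta/2}$ has effective collision frequency $\wv^{-\beta}$, the function $L^{-1}(\Id-\cP)(v\cdot\hat\eta)\psi$ grows polynomially in $v$, and the integrals converge against the tail $\cM\sim\wv^{-(3+\alpha)}$ exactly when $\alpha>6+\beta$; in that case one obtains classical second-order expansions $\mu_0\sim-\kappa_2|\eta|^2$, $\mu_\perp\sim-\nu|\eta|^2$ with $\nu$ carrying the diffusion matrix $A$. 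When $\alpha<6+\beta$ the Boussinesq quadratic form diverges and one must instead truncate the velocity integral at the mean-free-path scale $|v|\sim|\eta|^{-1/(1+\beta)}$ (free transport balanced against the frequency $\wv^{-\beta}$); the surviving tail contribution then produces the anomalous law $\mu_0(\eta)\sim-\kappa_1|\eta|^\zeta$ with $\zeta=\tfrac{\alpha+\beta-4}{1+\beta}$, and the quantitative control of the truncation errors and of $L$ acting on truncated fluid moments is precisely what Hypothesis \ref{hyp:amp} is tailored for. The structural point I would emphasise is that the shear eigenvector $v\cdot e$ is one polynomial degree less singular than the Boussinesq eigenvector $|v|^2-5$, so $\mu_\perp$ is of strictly higher order than $\mu_0$ in the anomalous regime: the transversal waves decay faster than the Boussinesq and acoustic ones, which is why the momentum equation loses its viscous term when $\alpha<6+\beta$. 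I expect this eigenvalue expansion — rigorously establishing $\zeta$ and the degree count giving $\mu_\perp=o(\mu_0)$ — to be the main obstacle.

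It then remains to pass to the limit. I would set $\gamma(\ve)=\ve^\zeta$ when $\alpha<6+\beta$ and $\gamma(\ve)=\ve^2$ when $\alpha>6+\beta$, and decompose
\begin{equation*}
	\widehat{h}_\ve(t,\xi,\cdot) = \sum_{j=1}^5 e^{t\gamma(\ve)^{-1}\mu_j(\ve\xi)}\,\Pi_j(\ve\xi)\widehat{h}_{\ve,I}(\xi,\cdot) + e^{t\gamma(\ve)^{-1}\cL_{\ve\xi}}\,\widehat{h}_{\ve,I}^{\,\flat}(\xi,\cdot),
\end{equation*}
with $\Pi_j(\eta)$ the spectral projections and $\flat$ the complementary part, which decays like $e^{-t\gamma(\ve)^{-1}\lambda/2}\to0$. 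Since $\alpha>5$ forces $\zeta>1$, the acoustic phases $\gamma(\ve)^{-1}\Im\mu_\pm(\ve\xi)\sim\ve^{1-\zeta}c_s|\xi|\to\infty$, so a non-stationary phase (averaging) argument — consistent with the well-preparedness \eqref{eq:iceps} — shows the acoustic components oscillate away and only enforce the constraints $\rho+\theta=0$ and $\nabla\cdot m=0$. The Boussinesq mode gives $\gamma(\ve)^{-1}\mu_0(\ve\xi)\to-\kappa_1|\xi|^\zeta$ (resp. $-\kappa_2|\xi|^2$), i.e. $\pa_t\theta=\kappa_1\Delta^{\zeta/2}\theta$ (resp. $\kappa_2\Delta\theta$), and the transversal mode gives $\gamma(\ve)^{-1}\mu_\perp(\ve\xi)\to0$ (resp. $-\nu|\xi|^2$), i.e. $\pa_t m=\nabla p$ (resp. $\pa_t m=\nabla\cdot(A\nabla m)+\nabla p$) with $\nabla\cdot m=0$. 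Using that $\Pi_j(\ve\xi)$ converge, along each fixed direction, to the projections onto the acoustic, Boussinesq and shear subspaces of $\cN(L)$, and reading off the moments of the limiting profile $\cM(v\cdot m+\tfrac{|v|^2-5}{2}\theta)$, one identifies all constants and the pressure and closes the system; the stated initial data $m(0,x)=m_I(x)$, $\theta(0,x)=\theta_I(x)$ come from the well-preparedness, and translating back via $f_\ve=\cM h_\ve$ gives the asserted weak convergence in $L^2([0,T];H^{-\zeta}_x L^2_v(\wv^{-\beta}\cM))$, the negative $x$-regularity absorbing the acoustic oscillations and the velocity weight being the one furnished by the coercivity estimate.
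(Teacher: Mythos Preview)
Your proposal follows the paper's strategy closely: Fourier in $x$, construction of five fluid-mode eigenvalues near zero via the decomposition $\tilde L=\cK-1$ and rotation invariance (splitting into a longitudinal $3\times3$ block and a transversal scalar block), the integrability dichotomy $\alpha\gtrless 6+\beta$ for the second-order coefficient, and the key structural point that the shear eigenvector $v\cdot e$ sits one polynomial degree below the Boussinesq one $|v|^2-5$, so that $\mu_\perp=o(\mu_0)$ in the fractional regime and the viscous term drops from the momentum equation. Two execution differences are worth flagging. The paper does not pose the standard problem $\cL_\eta\phi=\mu\phi$ but the \emph{weighted} dual problem $L^*\phi_\eta+i\eta(v\cdot\sigma)\phi_\eta=-\mu(\eta)\wv^{-\beta}\phi_\eta$; the weight $\wv^{-\beta}$ on the right is what turns Hypothesis~\ref{hyp:coercivity} into a usable gap and makes $\cP$ the natural orthogonal projection, and it is through this formulation that the reduction to the finite-dimensional determinant conditions \eqref{evslong}--\eqref{evstrans} goes. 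And for the limit the paper does not decompose the semigroup along spectral projections with an acoustic averaging argument; instead it tests the equation against each $\phi_{\eta,l}\cM$, obtains a closed $5\times5$ system \eqref{syst:mac} for $(\rho,\theta,\sigma\cdot m,\overrightarrow C^1\cdot m,\overrightarrow C^2\cdot m)$, inverts the leading matrix $M_1(0)$ explicitly, and reads off incompressibility and Boussinesq from the direct moment identities \eqref{c:incomp}--\eqref{c:Bouss} rather than from oscillation. Your semigroup route is a legitimate alternative, closer in spirit to Ellis--Pinsky and Gervais--Lods; the moment-testing route trades control of the full projections $\Pi_j(\eta)$ for having to invert $M_1(0)$, and makes the block structure of the limit more transparent.

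One genuine omission: in the fractional regime you will not be able to identify $\kappa_1$ (nor even confirm $\mu_0(\eta)\sim\eta^\zeta$ rather than $o(\eta^\zeta)$) from Hypotheses~\ref{hyp:equilibira}--\ref{hyp:amp} alone. Your truncation at $|v|\sim\eta^{-1/(1+\beta)}$ together with Hypothesis~\ref{hyp:amp} controls the errors coming from $L$ acting on cut-off kernel elements, but to pass to the limit in the remaining bulk integral one needs a priori pointwise control of the rescaled eigenfunctions $\Phi_\eta(u):=\phi_\eta(\eta^{-1/(1+\beta)}u)$ and their convergence to a nontrivial profile $\Phi$. The paper isolates this as a separate assumption (Hypothesis~\ref{hyp:fluidm}), and it is precisely here that the rigorous argument is most delicate; you should expect to need such an input on the eigenfunctions themselves, not only on $L$ applied to truncated moments.
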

	
	Hydrodynamic limits of kinetic equations have been matter of interest since Hilbert posed his famous 6-th problem \cite{H} in 1900. In several contributions it was shown that in the regime of a small mean-free path and appropriate rescaling a diffusive system of equations for the macroscopic quantities can be obtained. See therefore early works by Grad \cite{g} and Ellis et al. \cite{Ellis-Pinsky} for a derivation of the Navier-Stokes equation from the linearized Boltzmann equation assuming smooth initial data. While in the first mentioned work the method of Chapman–Enskog-Hilbert expansion [cite something] was used, in the latter the method was based on spectral analysis of the operator in Fourier-space. In further contributions by Golse et al. in \cite{GS-R}, see also Saint-Raymond's book \cite{S-R}, a Navier–Stokes–Fourier system was rigorously obtained from a hydrodynamical limit taking into account the non-linearity of the Boltzmann-equation. In more recent works Gervais et al. \cite{GL} a general concept proving convergence to the Navier–Stokes–Fourier system was developed and is applicable to several equations. Such diffusive approximations, as the ones in the works listed above, require the equilibrium distributions to have finite variance, which is given in the classical case of a Gaußian equilibrium. 
	
	In contrast to this, in several contexts as wave turbulence \cite{N, MMT, BPFS}, economy \cite{DT}, astrophysical plasmas \cite{ST} and bacterial movement \cite{ODA, AS}, equilibrium distributions appear, which have algebraic decay ("heavy tails") and might violate the finite variance condition, that is crucial in the established approaches. For such types of equilibrium distributions the diffusive time scale is too long. For the linear Boltzmann equation, which is just conserving mass, Mellet et al. \cite{M3} showed that under a shorter time-rescaling a fractional diffusion equation can be derived in the macroscopic limit. While their methods rely on the Laplace-Fourier transform, similar results have been achieved from a probabilistic point of view by Jara et al. \cite{MR2588245}. Further contributions can be found in Ben Abdallah et al. \cite{BMM}, where the authors were able to achieve a fractional diffusion limit by an Hilbert expansion approach. Spectral approaches were used recently by Dechicha et al. \cite{DM} for the kinetic Fokker–Planck operator as well as in the work by Bouin et al. \cite{BM}, where the latter contribution describes an unified spectral method for a huge class of linear kinetic equations having one macroscopically conserved quantity.
	
	The case of an equation with an heavy-tailed equilibrium and with several macroscopic invariants was so far just investigated by Hittmeir et. al \cite{MR3422647} in the very specific scenario of a linear BGK-type operator. The authors used the by Mellet in \cite{Mel} introduced moments method. In this work we propose to extend and generalize these results by performing a fine spectral analysis of the generator in the Fourier space, inspired by the early works \cite{Ellis-Pinsky} and the more recent contributions \cite{BM}. 
	
	This article is organized as follows. In Section \ref{s:strategy} the strategy and main steps of the proof are explained by stating the important intermediate results. This specially involves the construction of the "fluid modes" and their corresponding eigenvalues by solving a corresponding eigenvalue-problem related to the equation as well as investigating their behaviour in the limit. Section \ref{s:fluidmodes} is dedicated to the proof of Proposition \ref{p:fluidmodes}, giving existence of the aforementioned fluid modes. Section \ref{s:scaling} is then dedicated to the investigation of the limit behaviour of the eigenvalues, since their rates will be crucial for the final limit argument which is given in Section \ref{s:convergence}.

\section{Strategy of the proof Theorem \ref{t:main}}\label{s:strategy}

The first result presented in this work, on the basis of the four hypotheses stated above, is a quantitative construction of braches of "fluid eigenmodes" with their corresponding eigenvalues in the regime of small scaling parameter. These eigenvalues are branches from the known zero-eigenvalue of $\tilde{L}$ for the lightly perturbed operator $\tilde{L} + i \eta \wv^{-\beta} \left(\sigma \cdot v\right)$ for small $\eta$. It can be proven that there exist exactly five solutions to this eigenvalue problem (in general always two more than the dimension). Our proof is strongly inspired on the work by Ellis and Pinsky \cite{Ellis-Pinsky}, which results in an inversion argument on the finite dimensional subspace spanned by the kernel of $\tilde{L}$, while extending it to the setting where involved integrals might be infinite due to the heavy tail of the equilibrium. 

\begin{proposition}[Construction of the fluid modes]\label{p:fluidmodes}
	
	Given Hypotheses \ref{hyp:equilibira}-\ref{hyp:coercivity}-\ref{hyp:decomp} there are $\bar{\eta} >0$ and $0<\bar{r}<\lambda$ s.t. for any $\eta \in (0,\bar{\eta})$ and any $\sigma \in \mathbb{S}^{2}$ there are exactly 5 solutions \newline $\left\{\left(\mu(\eta)_l,\,\phi_{\eta,l}\right)\right\}_{l \in \pm,0,\dots,3} \in \C \times L_v^2\left(\wv^{-\beta} \cM\right)$ to
	\begin{equation*}
	L^* \phi_{\eta} + i \eta (v \cdot \sigma) \phi_\eta = -\mu(\eta)    \wv^{-\beta} \phi_{\eta}\,,
	\end{equation*}
	where for all $l \in \{\pm,0,1,2\}$ it holds $\mu(\eta)_l \in B(0,\bar{r})$  and 
	\begin{equation*}
	\begin{split}
		&1= \left\|\cP(\phi_{\eta,l}) \right\|_{-\beta}\,, \quad \text{and} \quad 0 = \left \langle \phi_{\eta,l},\, \phi_{\eta,k}  \right \rangle_{-\beta} \,, \quad l \neq k \,.
	\end{split}
	\end{equation*}
	Moreover, we have the following asymptotics 
	\begin{align*}
		\left\| \phi_{\eta,l}-\cP(\phi_{\eta,l})\right\|_{-\beta} \lesssim \sqrt{\re(\mu(\eta)_l)}\,.
	\end{align*}
	\end{proposition}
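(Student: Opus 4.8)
The plan is a Lyapunov--Schmidt reduction of the perturbed spectral problem around the five--dimensional kernel, in the spirit of Ellis--Pinsky \cite{Ellis-Pinsky}, the genuinely new ingredient being the control of the unbounded drift $v\cdot\sigma$ against the algebraic tails of $\cM$. We first conjugate to the weighted operator: with $\tilde\phi:=\wdot^{-\beta/2}\phi$, which is an isometry $L_v^2(\wdot^{-\beta}\cM)\to L_v^2(\cM)$, and writing $L^{*}=\wdot^{-\beta/2}\tilde L^{*}(\wdot^{-\beta/2}\,\cdot\,)$ with $\tilde L^{*}(\cdot)=\wdot^{\beta/2}L^{*}(\wdot^{\beta/2}\,\cdot\,)$ the $L_v^2(\cM)$-adjoint of $\tilde L$, the eigenvalue equation becomes
\begin{equation*}
\tilde L^{*}\tilde\phi+i\eta\,\wdot^{\beta}(v\cdot\sigma)\,\tilde\phi=-\mu\,\tilde\phi \qquad\text{in } L_v^2(\cM).
\end{equation*}
Here $\cN(\tilde L^{*})=\wdot^{-\beta/2}\spa\{1,v,|v|^2\}$ has dimension $5$; by Hypotheses \ref{hyp:coercivity}--\ref{hyp:decomp}, $\tilde L^{*}$ is bounded on $L_v^2(\cM)$, leaves $\cN(\tilde L^{*})^{\perp}$ invariant, and satisfies $-\re\langle\tilde L^{*}g,g\rangle\ge\lambda\|g\|^2$ there, so it is boundedly invertible on $\cN(\tilde L^{*})^{\perp}$ with inverse of norm $\le 1/\lambda$. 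Denoting by $\Pi$ the $L_v^2(\cM)$-orthogonal projection onto $\cN(\tilde L^{*})$ and writing $\tilde\phi=p+q$, $p=\Pi\tilde\phi$, $q=(\Id-\Pi)\tilde\phi$, and using $\tilde L^{*}p=0$, $\Pi\tilde L^{*}=0$ and $\tilde L^{*}\big(\cN(\tilde L^{*})^{\perp}\big)\subset\cN(\tilde L^{*})^{\perp}$, the equation is equivalent to the system
\begin{align*}
\mu\,p &= -i\eta\,\Pi\big[\wdot^{\beta}(v\cdot\sigma)(p+q)\big],\\
\big(\tilde L^{*}+\mu+i\eta(\Id-\Pi)\wdot^{\beta}(v\cdot\sigma)\big)\,q &= -i\eta\,(\Id-\Pi)\,\wdot^{\beta}(v\cdot\sigma)\,p .
\end{align*}

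The core of the proof --- and the main obstacle --- is to solve the second equation for $q=q[p;\mu,\eta]\in\cN(\tilde L^{*})^{\perp}$. Since $\cM$ decays only polynomially, $\wdot^{\beta}(v\cdot\sigma)p$ need not belong to $L_v^2(\cM)$ for $p\in\cN(\tilde L^{*})$ when $\alpha+\beta$ is close to $4$, so $i\eta\,\wdot^{\beta}(v\cdot\sigma)$ cannot be treated as a small bounded perturbation of $\tilde L^{*}$. The plan is to carry out the whole inversion with the kernel functions $1,v,|v|^2$ replaced by their truncations $\chi^1_R\{1,v,|v|^2\}$, which are compactly supported and hence have all moments finite: one inverts $\tilde L^{*}$ on $\cN(\tilde L^{*})^{\perp}$ on the bulk contribution, and on the remainder uses $L(1)=L(v)=L(|v|^2)=0$ to rewrite $\tilde L^{*}$ applied to a tail through $L$ applied to truncated kernel elements, which is controlled by the amplitude bounds of Hypothesis \ref{hyp:amp} --- these are $R$-decaying precisely because $\alpha+\beta>4$ --- and one finally optimises $R$ against $\eta$. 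One also checks that the finite--dimensional operator $\Pi\big[\wdot^{\beta}(v\cdot\sigma)(\cdot)\big]$ is well defined on $\cN(\tilde L^{*})$: its matrix entries are integrals $\int(v\cdot\sigma)\,P(v)\,\cM\,\dd v$ with $\deg P\le 4$, convergent because $\alpha>5$ and with vanishing odd part by the radial symmetry of $\cM$. This yields, for $|\mu|$ and $\eta$ small, a unique $q[p;\mu,\eta]$, linear in $p$, analytic in $\mu$, with $q[p;\mu,0]=0$ and $\|q[p;\mu,\eta]\|\lesssim\eta\,\|p\|$; the terms $\mu q$ and $i\eta(\Id-\Pi)\wdot^{\beta}(v\cdot\sigma)q$ are then genuinely small and absorbed by a Neumann series / fixed point. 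This weighted inversion is exactly the point at which the heavy tails depart from the Gaussian setting of \cite{Ellis-Pinsky}.

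Substituting $q[p;\mu,\eta]$ into the first equation turns it into a $5\times5$ analytic matrix eigenvalue problem $M(\mu,\eta)\hat p=\mu\hat p$ in a fixed basis of $\cN(\tilde L^{*})$, with $M(\mu,0)=0$ and $\|M(\mu,\eta)\|\le C\eta$ uniformly for $\mu\in\overline{B(0,\bar r)}$. On $|\mu|=\bar r$ one has $\det\big(M(\mu,\eta)-\mu\,\Id\big)=(-\mu)^5\det\big(\Id-\mu^{-1}M(\mu,\eta)\big)$ with $\|\mu^{-1}M(\mu,\eta)\|\le C\eta/\bar r<1$; so, fixing $\bar r\in(0,\lambda)$ and then $\bar\eta$ small, Rouch\'e's theorem gives exactly five zeros $\mu(\eta)_{\pm},\mu(\eta)_0,\mu(\eta)_1,\mu(\eta)_2$ in $B(0,\bar r)$ (two more than the dimension $d=3$), all tending to $0$ as $\eta\to0$. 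As the Lyapunov--Schmidt reduction is an equivalence, these are exactly the five solutions of the original problem with $\mu\in B(0,\bar r)$, with eigenvectors $\hat p_l$ and $\phi_{\eta,l}=\wdot^{\beta/2}\big(p_l+q[p_l;\mu(\eta)_l,\eta]\big)\in L_v^2(\wdot^{-\beta}\cM)$. Rotational invariance about $\sigma$ block--diagonalises $M$ along $\cN(\tilde L^{*})=\wdot^{-\beta/2}\spa\{1,\,v\cdot\sigma,\,|v|^2\}\oplus\wdot^{-\beta/2}\spa\{\sigma\}^{\perp}$ into a longitudinal (acoustic / Boussinesq) $3$-block and a transversal $2$-block --- which also isolates the two families of eigenvalues whose distinct scalings are studied in Section \ref{s:scaling} --- so the eigenvectors may be chosen inside each block. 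Finally, at $\eta=0$ the $p_l$ span $\cN(\tilde L^{*})$ and $\{1,v_i,|v|^2-3\}$ are $\langle\cdot,\cdot\rangle_{-\beta}$-orthogonal, so a Gram--Schmidt procedure inside $\cN(\tilde L^{*})$, valid for $\eta$ small (and performed inside an invariant block if two of the $\mu(\eta)_l$ collide), rescales the $\phi_{\eta,l}$ to satisfy $\|\cP(\phi_{\eta,l})\|_{-\beta}=1$ and $\langle\phi_{\eta,l},\phi_{\eta,k}\rangle_{-\beta}=0$ for $l\neq k$.

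It remains to establish $\|\phi_{\eta,l}-\cP(\phi_{\eta,l})\|_{-\beta}\lesssim\sqrt{\re\mu(\eta)_l}$. Pairing the conjugated equation with $\tilde\phi_l$ in $L_v^2(\cM)$ and taking real parts, the drift term disappears because $\wdot^{\beta}(v\cdot\sigma)$ is a real multiplication operator, so that $\langle\wdot^{\beta}(v\cdot\sigma)\tilde\phi_l,\tilde\phi_l\rangle\in\R$; using $\tilde L^{*}p_l=0$ and $\langle\tilde L^{*}q_l,p_l\rangle=\langle q_l,\tilde L p_l\rangle=0$ this leaves
\begin{equation*}
\re\mu(\eta)_l\,\|\tilde\phi_l\|^2=-\re\langle\tilde L^{*}q_l,q_l\rangle\ge\lambda\,\|q_l\|^2 .
\end{equation*}
Since $\|\tilde\phi_l\|^2=\|p_l\|^2+\|q_l\|^2$ with $\|p_l\|=\|\cP(\phi_{\eta,l})\|_{-\beta}=1$ and $\re\mu(\eta)_l\le\bar r<\lambda$, this rearranges to $\|q_l\|^2\le\re\mu(\eta)_l/(\lambda-\bar r)$, and $\|q_l\|_{L_v^2(\cM)}=\|\phi_{\eta,l}-\cP(\phi_{\eta,l})\|_{-\beta}$ gives the claimed bound. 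The same identity also shows $\re\mu(\eta)_l\ge 0$, and $\re\mu(\eta)_l>0$ for $\eta>0$: otherwise $q_l=0$, so $\phi_{\eta,l}\in\spa\{1,v,|v|^2\}$ and $L^{*}\phi_{\eta,l}=0$, forcing $(v\cdot\sigma)\phi_{\eta,l}\equiv 0$, which is impossible for $\phi_{\eta,l}\neq 0$. The only delicate point in this scheme is the weighted inversion of the complement equation described above; everything else is the by-now standard finite--dimensional perturbation machinery.
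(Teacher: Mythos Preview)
Your overall architecture (Lyapunov--Schmidt reduction to a $5\times5$ problem, rotational block decomposition into a longitudinal $3$-block and a transversal $2$-block, Rouch\'e-type counting, and the final energy identity for $\|\phi_{\eta,l}-\cP\phi_{\eta,l}\|_{-\beta}$) matches the paper, and the last estimate is exactly as in Section~\ref{ss:scalarestimate}. The genuine gap is in the inversion of the complementary equation. You correctly flag that $\wdot^{\beta}(v\cdot\sigma)p\notin L^2(\cM)$ in the fractional regime and propose to cure the right-hand side by truncating $p$, but the same unbounded multiplication reappears on the \emph{left} in the term $i\eta(\Id-\Pi)\wdot^{\beta}(v\cdot\sigma)q$. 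Your claim that this term ``is then genuinely small and absorbed by a Neumann series'' is not justified: $q$ lies only in $L^2(\cM)$, and $\wdot^{\beta}(v\cdot\sigma)$ is unbounded there, so $\|\eta\,\wdot^{\beta}(v\cdot\sigma)q\|$ is not controlled by $\eta\|q\|$. No decay of $q$ is produced by inverting $\tilde L^{*}$ under the stated hypotheses, so the fixed point does not close.

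The paper resolves this not by Lyapunov--Schmidt on $\tilde L^{*}$ but by exploiting Hypothesis~\ref{hyp:decomp} directly: writing $\tilde L^{*}=\cK^{*}-1$ and moving the identity part together with the drift to the other side, the equation becomes
\[
\psi_\eta=\frac{\wdot^{-\beta/2}(\cP+\cS^{*})(\wdot^{\beta/2}\psi_\eta)}{(1-\mu)-i\eta\,\wdot^{\beta}(v\cdot\sigma)}\,,
\]
so that the unbounded drift appears only in the \emph{denominator} of a multiplication operator that is uniformly bounded (modulus $\ge 1-|\mu|$). The remaining inversion (Lemma~\ref{l:sinv}) involves only the compact piece $\cS^{*}$ composed with this bounded multiplier, and is handled by the Fredholm alternative. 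This is the essential use of Hypothesis~\ref{hyp:decomp}, and it is why the proposition lists Hypotheses~\ref{hyp:equilibira}--\ref{hyp:coercivity}--\ref{hyp:decomp} but \emph{not} Hypothesis~\ref{hyp:amp}; your scheme imports the amplitude estimates, which the paper does not need here. The truncation $\chi_R$ does appear in the paper, but at a later and different stage: after the finite-dimensional reduction, it is applied to the \emph{test functions} in the matrix entries (see \eqref{def:AR}) solely to restore differentiability at $\eta=0$ for the implicit function theorem, and one then passes $R\to\infty$ uniformly. It is not used to tame the drift in the infinite-dimensional inversion.
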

	In the proof of Proposition \ref{p:fluidmodes} (see Section \ref{s:fluidmodes}) it is further shown that three of the eigenvalues are real, which in hydrodynamic theory are usually referred to the \emph{Boussinesq eigenvalue} $\mu(\eta)_0$ and the two \emph{transversal wave eigenvalues} $\mu(\eta)_t$. In general dimensions one always observes the occurence of the first mentioned Boussinesq eigenvalue $\mu(\eta)_0$, while the number of the transversal wave eigenvalues $\mu(\eta)_t$ is usually given by the dimension  -1. Further, one always observes the two complex \emph{acoustic wave eigenvalues} $\mu(\eta)_\pm$, which are complex conjugates to each other. (See Figure \ref{fig:evsdiff} and Figure \ref{fig:evsfrac}.)
	We would also like to notice that the  fluid modes further depend on the frequency $\sigma$, but this Proposition \ref{p:fluidmodes} and in the rest of the paper the dependency in $\sigma$ is kept implicit rather than explicit in order to lighten notation. On the other hand, the eigenvalues are independent of $\sigma$ since we assume that the operator $L$ is invariant by rotations in $v$. To identify the macroscopic limit it is necessary to estimate the leading order of the eigenvalues $\{\mu(\eta)_l\}$, $l \in \{\pm, 0,1,2\}$, which asks for scaling integrability properties of the corresponding fluid modes, which leads to our last hypothesis.
	
	\begin{hypothesis}[Scaling of the fluid modes]
		\label{hyp:fluidm}
		For the fluid modes constructed Proposition \ref{p:fluidmodes} have the following integrability and scaling conditions:
		\begin{itemize}
			\item If $\alpha>6+\beta$, the fluid modes $\{\phi_{\eta,l}\}_l$, $l \in \{\pm,0\}$ fulfil the following integrability condition
			$$	
				\|\phi_{\eta,l}\|_\delta \lesssim 1\,, \quad \text{for all } \, \delta < \alpha\,.
			$$
			\item If $\alpha \in (4-\beta,6+\beta)$, the rescaled fluid mode $\Phi_{\eta,l}(\cdot):=\phi_{\eta,l}\left(\eta^{-\frac{1}{1+\beta}} \cdot \right)$, $l \in \{\pm,0\}$ is pointwise converging to a limit $\Phi_l$ in $L_{loc}^2\left(\R^d\setminus0\right)$ as $\eta \to 0$. Moreover, we assume the pointwise controls
			\begin{align*}
				\left|\re\left(\Phi_{\eta,l}(u)\right)\right|, \left|\im\left(\Phi_{\eta,l}(u)\right)\right| \lesssim \left(\eta^{\frac{2}{1+\beta}}+|u|^2\right)^{\frac{\delta}{2}}\,,
			\end{align*}
			with $\delta > \alpha -3$\,.
			\item  If $\alpha>4+\beta$, the fluid modes $\{\phi_{\eta,t}\}_l$, $t \in \{1,2\}$ fulfil the following integrability condition
			$$	
				\|\phi_{\eta,l}\|_\delta \lesssim 1\,, \quad \text{for all } \, \delta < \alpha\,.
			$$
			\item If $\alpha \in (4-\beta,4+\beta)$, the rescaled fluid mode $\Phi_{\eta,l}(\cdot):=\phi_{\eta,l}\left(\eta^{-\frac{1}{1+\beta}} \cdot \right)$, $t \in \{1,2\}$ is pointwise converging to a limit $\Phi_t$ in $L_{loc}^2\left(\R^d\setminus0\right)$ as $\eta \to 0$. Moreover, we assume the pointwise controls
			\begin{align*}
				\left|\re\left(\Phi_{\eta,t}(u)\right)\right|, \left|\im\left(\Phi_{\eta,t}(u)\right)\right| \lesssim \left(\eta^{\frac{2}{1+\beta}}+|u|^2\right)^{\frac{\delta}{2}}\,,
			\end{align*}
			with $\delta > \alpha -2$\,.
		\end{itemize}
	\end{hypothesis}
	
	With this Hypothesis, we are able to prove Proposition \ref{p:eigenvalues} (see Section \ref{s:scaling}), which gives the precise scaling of the eigenvalues, which is crucial and quantitive information when passing to  $\eta \to 0$ in our equation.
	
	\begin{proposition}[Scaling of the eigenvalues]\label{p:eigenvalues}
	
	We have the following asymptotic behaviour of the eigenvalues $\{\mu(\eta)_l\}$, $l \in \{0,\pm,1,2\}$ constructed in Proposition \ref{p:fluidmodes} as $\eta \to 0$:
	\begin{equation}\label{scalings}
	\begin{split}
		&\mu(\eta)_0 \in \R \quad \text{and} \quad \mu(\eta)_0 \sim \Theta(\eta) := \begin{cases} \eta^2 \quad &\text{if }\alpha>6+\beta\,,\\  \eta^{\frac{-4+\alpha+\beta}{1+\beta}} \quad &\text{if }\alpha <6+\beta\,, \end{cases} \\
		&\text{as well as} \\
		&\re(\mu(\eta)_{\pm}) \sim \Theta(\eta)  \quad \text{and} \quad \im(\mu(\eta)_{\pm}) \sim \eta \,, \\
		&\text{and} \\ 
		&\mu(\eta)_t \in \R \quad \text{and} \quad \mu(\eta)_t \sim \tilde{\Theta}(\eta) := \begin{cases} \eta^2 \quad &\text{if }\alpha>4+\beta\,,\\  \eta^{\frac{-2+\alpha+\beta}{1+\beta}} \quad &\text{if }\alpha <4+\beta\,, \end{cases}\, \quad \text{for all } t \in \{1,2\}\,.	
	\end{split}
	\end{equation}
	\end{proposition}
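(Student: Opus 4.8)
\emph{Plan of proof.} Fix $l\in\{0,\pm,1,2\}$ and abbreviate $\mu:=\mu(\eta)_l$, $\phi:=\phi_{\eta,l}$. Conjugating the eigenvalue equation of Proposition~\ref{p:fluidmodes} by $\wdot^{-\frac\beta2}$, i.e.\ writing $g:=\wv^{-\frac\beta2}\phi$ and multiplying by $\wv^{\frac\beta2}$, turns it into
\begin{equation*}
	\tilde L^{*}g+i\eta\,(v\cdot\sigma)\,\wv^{\beta}\,g=-\mu\,g\,,
\end{equation*}
and we decompose $g=g_\parallel+g_\perp$ with $g_\parallel:=\wv^{-\frac\beta2}\cP(\phi)$, so that $g_\perp$ is orthogonal in $L^2(\cM)$ to $\wdot^{-\frac\beta2}\operatorname{span}\{1,v,|v|^2\}$ and $\|g_\perp\|=\|\phi-\cP(\phi)\|_{-\beta}$. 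Pairing the displayed equation with $g$ in $L^2(\cM)$ and noting $\int_{\R^3}(v\cdot\sigma)\wv^{\beta}|g|^2\cM\dd v\in\R$ (the transport term is purely imaginary), taking real parts and using $\tilde L^{*}g_\parallel=0$ together with Hypotheses~\ref{hyp:coercivity} (the real part of the form of $\tilde L^{*}$ equals that of $\tilde L$) and~\ref{hyp:decomp} gives
\begin{equation*}
	\lambda\,\|g_\perp\|^2\le\re(\mu)\,\|g\|^2=-\re\langle\tilde L g_\perp,g_\perp\rangle\lesssim\|g_\perp\|^2\,,
\end{equation*}
while $\|g\|^2=1+\|g_\perp\|^2$ and Proposition~\ref{p:fluidmodes} force $\|g\|\sim1$ and $g_\perp\neq0$. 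Hence $\re(\mu_l)\sim\|\phi_{\eta,l}-\cP(\phi_{\eta,l})\|_{-\beta}^2$, so the whole statement for the real eigenvalues $\mu_0,\mu_1,\mu_2$, and the size of $\re\mu_\pm$, is reduced to the sharp asymptotics of $\|\phi^\perp\|_{-\beta}^2$ as $\eta\to0$.

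\emph{The cross-over scale.} Since $\tilde L=\mathcal K-1$, the loss term of $L^{*}$ is $-\wv^{-\beta}$, which is balanced against the transport amplitude $\eta|v|$ exactly at the velocity scale $R_\eta:=\eta^{-\frac1{1+\beta}}$ (note $\eta^{\frac2{1+\beta}}=R_\eta^{-2}$). We split $\|\phi-\cP(\phi)\|_{-\beta}^2=I_\eta+J_\eta$ with $I_\eta$ the integral over $\{|v|\le R_\eta\}$ and $J_\eta$ the integral over $\{|v|>R_\eta\}$. For $J_\eta$ one uses the rescaled modes $\Phi_{\eta,l}(u)=\phi_{\eta,l}(R_\eta u)$ and the pointwise control of Hypothesis~\ref{hyp:fluidm}: a change of variables gives $\int_{|v|>R_\eta}|\phi_{\eta,l}|^2\wv^{-\beta}\cM\dd v\lesssim R_\eta^{-(\alpha+\beta)}$, which is $o\big(R_\eta^{2-(\alpha+\beta)}\big)$ and a fortiori $o\big(R_\eta^{4-(\alpha+\beta)}\big)$, so on $\{|v|>R_\eta\}$ the mode is negligible against its polynomial projection and $J_\eta=(1+o(1))\int_{|v|>R_\eta}|\cP(\phi_{\eta,l})|^2\wv^{-\beta}\cM\dd v$. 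Expanding the polynomial $\cP(\phi_{\eta,l})$ and dropping the odd terms, the tail is governed by the highest-degree coefficient present: for $l\in\{0,\pm\}$ this is $\tfrac12|v|^2\theta_l$, giving $J_\eta\sim|\theta_l|^2R_\eta^{4-(\alpha+\beta)}=|\theta_l|^2\,\eta^{\zeta}$; for $l\in\{1,2\}$ one has $\rho_l=\theta_l=0$ and $m_l\perp\sigma$, so $\cP(\phi_{\eta,l})=v\cdot m_l$ has degree one and $J_\eta\sim|m_l|^2R_\eta^{2-(\alpha+\beta)}=|m_l|^2\,\eta^{\frac{-2+\alpha+\beta}{1+\beta}}$. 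This degree-$2$ versus degree-$1$ alternative is precisely the origin of the faster decay $\tilde\Theta(\eta)=o(\Theta(\eta))$ of the transversal eigenvalues in the fractional regime.

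\emph{The bulk term and the classical regime.} For $I_\eta$ one runs a Chapman--Enskog estimate truncated at scale $R_\eta$: from the component of the equation orthogonal (in $L^2(\cM)$) to $\wdot^{-\frac\beta2}\operatorname{span}\{1,v,|v|^2\}$, denoted $\mathsf Q$, one has $(\tilde L^{*}+\mu)g_\perp=-i\eta\,\mathsf Q\big((v\cdot\sigma)\wv^{\beta}g\big)$ with $(\tilde L^{*}+\mu)^{-1}$ bounded on $\operatorname{ran}\mathsf Q$ by the spectral gap (here $\bar r<\lambda$ is used); since the right-hand side need not be in $L^2(\cM)$ when $\alpha+\beta\le6$, one estimates $\|g_\perp\|$ by truncating $(v\cdot\sigma)\cP(\phi_{\eta,l})$ at $|v|=R_\eta$, the truncation error being controlled by the amplitude bounds \eqref{amplitudes1}--\eqref{amplitudes2} of Hypothesis~\ref{hyp:amp}. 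This yields $I_\eta\lesssim\eta^2\int_{|v|\le R_\eta}|v|^{6+\beta}\cM\dd v$ for $l\in\{0,\pm\}$ and $I_\eta\lesssim\eta^2\int_{|v|\le R_\eta}|v|^{4+\beta}\cM\dd v$ for $l\in\{1,2\}$, with a matching lower bound once the relevant coefficient of $\cP(\phi_{\eta,l})$ does not degenerate. If $\alpha>6+\beta$ (resp.\ $\alpha>4+\beta$) these integrals converge as $R_\eta\to\infty$, so $I_\eta\sim\eta^2$ and $J_\eta=o(\eta^2)$ (because $\zeta>2$, resp.\ $\frac{-2+\alpha+\beta}{1+\beta}>2$): this is the classical Ellis--Pinsky regime and gives $\mu_0\sim\Theta(\eta)=\eta^2$, $\mu_t\sim\tilde\Theta(\eta)=\eta^2$. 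If instead $\alpha<6+\beta$ (resp.\ $\alpha<4+\beta$) the integrals are $\sim R_\eta^{6+\beta-\alpha}$ (resp.\ $R_\eta^{4+\beta-\alpha}$), hence $I_\eta\lesssim\eta^{\zeta}$ (resp.\ $\eta^{\frac{-2+\alpha+\beta}{1+\beta}}$), the same order as $J_\eta$, and together with the lower bound $J_\eta\gtrsim\tfrac12\int_{|v|>R_\eta}|\cP(\phi_{\eta,l})|^2\wv^{-\beta}\cM\dd v-\int_{|v|>R_\eta}|\phi_{\eta,l}|^2\wv^{-\beta}\cM\dd v\gtrsim\eta^{\zeta}$ (resp.\ $\eta^{\frac{-2+\alpha+\beta}{1+\beta}}$) one obtains $\mu_0\sim\Theta(\eta)=\eta^{\frac{-4+\alpha+\beta}{1+\beta}}$, $\mu_t\sim\tilde\Theta(\eta)=\eta^{\frac{-2+\alpha+\beta}{1+\beta}}$. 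The non-degeneracy needed above ($\theta_l$ bounded away from $0$ for $l\in\{0,\pm\}$, $|m_l|$ bounded away from $0$ with $m_l\perp\sigma$ for $l\in\{1,2\}$) follows by compactness: $\cP(\phi_{\eta,l})$ is bounded in the five-dimensional fluid space and any accumulation point solves the leading-order reduced problem, namely the spectral problem for $\cP\big((v\cdot\sigma)\,\cdot\,\big)$ on $\cN(L)$ in $L^2(\wdot^{-\beta}\cM)$ — whose matrix entries are finite by \eqref{eq:Mnorm} — with eigenvalue $0$ on the Boussinesq mode ($\propto|v|^2-5$) and the two transversal modes ($v\cdot\sigma'$, $\sigma'\perp\sigma$), and eigenvalues $\pm c_s$ with sound speed $c_s=\sqrt{5/3}$.

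\emph{The imaginary parts and the main obstacle.} For the acoustic pair, the imaginary part of the same energy identity reads $\im(\mu)\|g\|^2=-\im\langle\tilde L g_\perp,g_\perp\rangle-\eta\int_{\R^3}(v\cdot\sigma)\wv^{\beta}|g|^2\cM\dd v$; the first term is $O(\|g_\perp\|^2)=O(\re\mu)=o(\eta)$ (here $\alpha>5$ is used, as it forces $\zeta>1$), and the leading part of the second is $-\eta\int_{\R^3}(v\cdot\sigma)|\cP(\phi_{\eta,\pm})|^2\cM\dd v$, which by \eqref{eq:Mnorm} and the identification of the acoustic eigenvector above is $\mp\,\eta\,c_s$ up to a nonzero factor $(1+o(1))$; equivalently, from the reduced problem $\mu(\eta)_\pm=\pm i\eta c_s+\text{(lower order)}$, so $\im(\mu(\eta)_\pm)\sim\eta$ while $\re(\mu(\eta)_\pm)\sim\Theta(\eta)$ as already shown. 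The core difficulty is the sharp evaluation of $\|\phi^\perp\|_{-\beta}^2$ in the fractional ranges, where the naive Chapman--Enskog integrals diverge: one must (i) isolate the cross-over scale $R_\eta=\eta^{-1/(1+\beta)}$ and prove that beyond it $\phi_{\eta,l}$ is negligible against the polynomial $\cP(\phi_{\eta,l})$ — which is exactly what the pointwise/integrability controls of Hypothesis~\ref{hyp:fluidm} and the amplitude bounds of Hypothesis~\ref{hyp:amp} are designed for — and (ii) track precisely which coefficient of $\cP(\phi_{\eta,l})$ carries the dominant tail, the degree-$2$ coefficient $\theta_l$ for the Boussinesq and acoustic modes versus the degree-$1$ coefficient $m_l$ for the transversal ones, which is what produces the two distinct fractional exponents in \eqref{scalings}.
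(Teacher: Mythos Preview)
Your reduction $\re(\mu_l)\sim\|\phi_{\eta,l}-\cP\phi_{\eta,l}\|_{-\beta}^2$ is correct and elegant, and the tail computation $J_\eta\sim R_\eta^{2k-(\alpha+\beta)}$ gives a clean \emph{lower} bound $\re(\mu_l)\gtrsim\Theta(\eta)$ (resp.\ $\tilde\Theta(\eta)$) in the fractional regime. The paper does not take this route: it never estimates $\|g_\perp\|^2$ sharply but instead tests \eqref{Re}--\eqref{Im} against the \emph{truncated projection} $\cP(\phi_\eta)\chi_R(\cdot\,\eta^{1/(1+\beta)})$ and computes the limit of $\re(\mu)/\Theta(\eta)$ directly; the role of Hypothesis~\ref{hyp:amp} there is to bound $\langle\phi^\perp,L(\cP\phi\cdot\chi_R)\rangle$, and Hypothesis~\ref{hyp:fluidm} enters only through the rescaled transport integral $\int(u\cdot\sigma)\Phi_\eta(u)\cE_{\eta,k}(u)\chi_R(u)\cM_\eta\dd u$.

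The gap in your argument is the \emph{upper} bound in the fractional regime. Your Chapman--Enskog step reads $(\tilde L^{*}+\mu)g_\perp=-i\eta\,\mathsf Q\big((v\cdot\sigma)\wv^{\beta}g\big)$ and you want to invert on $\operatorname{ran}\mathsf Q$, but the right-hand side is \emph{not} in $L^2(\cM)$ when $\alpha<2+2k+\beta$: indeed $(v\cdot\sigma)\wv^{\beta}g_\parallel=(v\cdot\sigma)\wv^{\beta/2}\cP\phi$ has $L^2(\cM)$-norm $\big(\int|v|^{2+2k+\beta}\cM\dd v\big)^{1/2}=\infty$. Truncating at $R_\eta$ does not save this, because the tail $\eta\,\|(v\cdot\sigma)\wv^{\beta/2}\cP\phi\,(1-\chi_{R_\eta})\|$ involves exactly the same divergent integral restricted to $|v|>R_\eta$, which is still infinite. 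Your invocation of Hypothesis~\ref{hyp:amp} is misplaced here: those estimates control $L$ applied to truncated moments, not the transport term; they are designed for the pairing $\langle\phi^\perp,L(\cP\phi\chi_R)\rangle$ that appears when one tests against $\cP\phi\chi_R$ (the paper's choice), not against $g_\perp$ (your choice). A similar issue affects your imaginary-part argument: replacing $\int(v\cdot\sigma)|\phi|^2\cM\dd v$ by $\int(v\cdot\sigma)|\cP\phi|^2\cM\dd v$ produces a cross term bounded by $\big(\int|v|^{2+2k+\beta}\cM\dd v\big)^{1/2}\|\phi^\perp\|_{-\beta}$, again divergent in the fractional window; the paper closes this in Section~\ref{sss:Imfrac} by the same truncation against $\cP\phi\chi_R(\cdot\,\eta^{1/(1+\beta)})$ and the rescaling identity. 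In short, your decomposition $I_\eta+J_\eta$ yields the right heuristic and a valid lower bound, but to obtain the matching upper bound you must test against the truncated fluid projection rather than against $g_\perp$, which is precisely how Hypotheses~\ref{hyp:amp} and~\ref{hyp:fluidm} are meant to be used.
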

	
	Notice that due to our assumption \eqref{c:constants} both $\Theta(\eta)$ and $\tilde{\Theta}(\eta)$ are well defined also in the regimes $\alpha \in (4-\beta, 6+\beta)$ and $\alpha \in (4-\beta, 4+\beta)$, since both $1+\beta>0$ and $-4+\alpha +\beta>0$. We observe that under strong enough decay of the equilibrium, i.e. $\alpha >6+\beta$, the scaling of $\{\mu(\eta)_l\}$, $l \in \{\pm,0,1,2\}$ corresponds  to the known classical scaling, see e.g. \cite{Ellis-Pinsky, GL}. On the other hand, if $\alpha <  6+\beta$ the scaling of the Boussinesq eigenvalues $\mu(\eta)_0$ and the acoustic wave eigenvalues $\mu(\eta)_\pm$ becomes fractional, while the scaling of the transversal wave eigenvalues $\mu(\eta)_t$ stays classical as long as $\alpha >4 +\beta$, and becomes also fractional if $\alpha <4 +\beta$. In particular we want to point out that in the fractional regime $\alpha < 6+\beta$ the eigenvalues $\mu(\eta)_t$ always scale faster that $\mu(\eta)_{0,\pm}$ (see Figure \ref{fig:evsdiff} and Figure \ref{fig:evsfrac}), which is the underlying reason that the equation for the macroscopic momentum $m$ becomes constant (up to a pressure gradient) in the limit. (See Theorem \ref{t:main} and Section \ref{s:convergence}).

\begin{figure}[!h]
 \begin{center}
\begin{tikzpicture}[scale=0.9]

%
\fill[pattern=north west lines, pattern color=black!50!green] (0,0) circle (2.2 cm);



%
\fill[pattern=north west lines, pattern color=blue, opacity=0.5] 
(-7,-5)--(-7,5)--(-5,5)--(-5,-5)
-- cycle;

%
\draw[->,line width=1, color=black](-7,0)--(5,0);
\draw[->,line width=1, color=black](0,-5)--(0,5);
\draw[line width=1, dashed, color=blue](-5,-5)--(-5,5);
%
%
\draw [line width=2, dashed, orange] (0,0) circle (4.5);
\draw [line width=2, dashed, black!50!green] (0,0) circle (2.2);


\node at (-5,0)[line width=6,blue]{$\bullet$};
\node at (-5.2,0)[line width=6, above right, blue]{\scriptsize $-\lambda$};

\node at (-2.2,0)[line width=6, red]{$\bullet$};
\node at (-2.2,2.7)[line width=6, red]{$\bullet$};
\node at (-2.2,-2.7)[line width=6, red]{$\bullet$};

\node at (-2.8,0)[line width=3, below, red]{\scriptsize $-\mu(\eta)_0$};
\node at (-2.2,2.7)[line width=3, below, red]{\scriptsize $-\mu(\eta)_+$};
\node at (-2.2,-1.9)[line width=3, below, red]{\scriptsize $-\mu(\eta)_-$};
\node at (-2.8,0.7)[line width=3, below, red]{\scriptsize $-\mu(\eta)_t$};

\node at (4.5,0)[line width=6,orange]{$\bullet$};

\end{tikzpicture}
   
  \caption{Eigenvalues in the diffusive regime $\alpha \in (4-\beta, 6+\beta)$: \\ 
  \centering$\mu(\eta)_0, \re(\mu(\eta)_\pm), \mu(\eta)_t \sim -\eta^2$ and $\im (\mu(\eta)_\pm) \sim \pm \eta$.}
  \label{fig:evsdiff}
   \end{center}
  \end{figure}
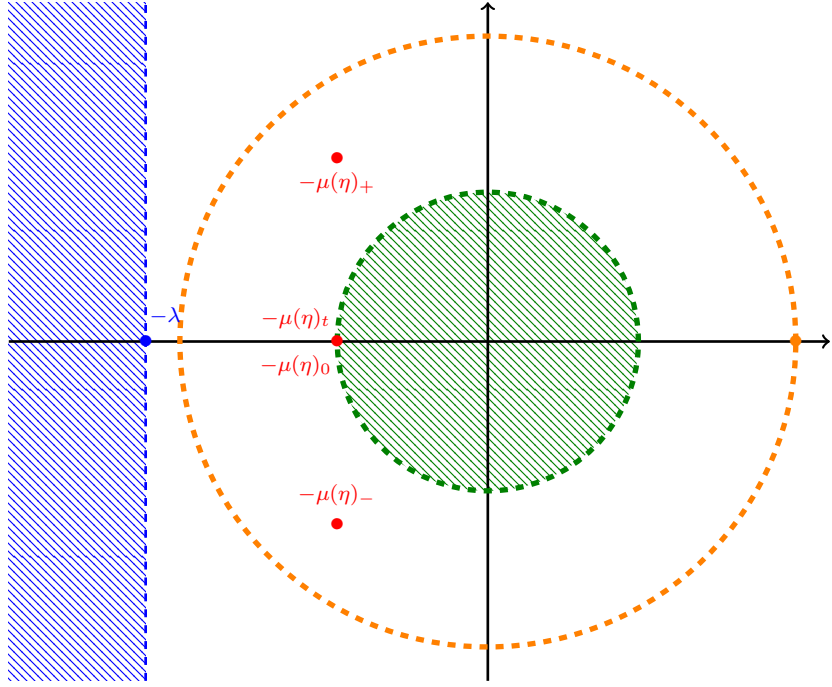
  
   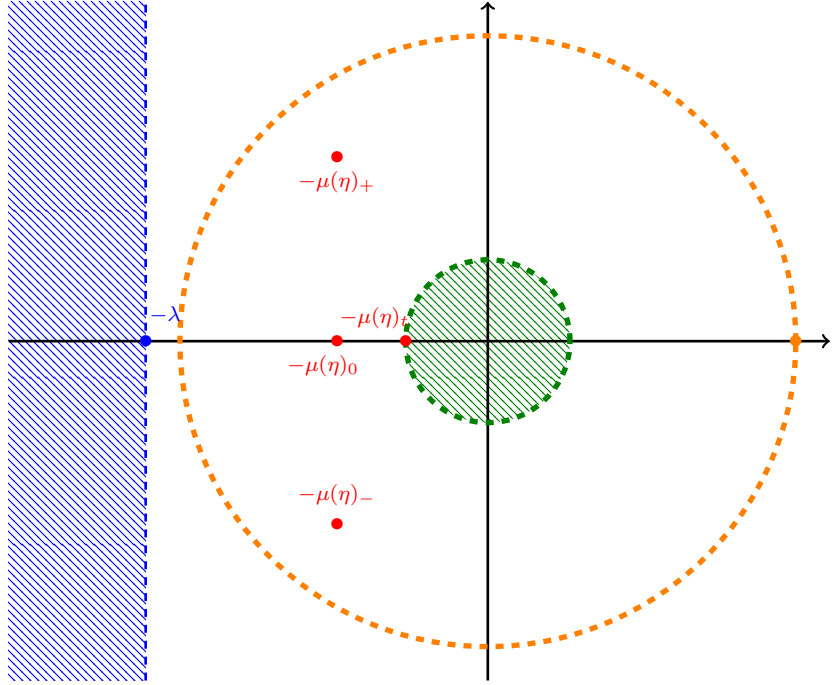
\begin{figure}[!h]
      \begin{center}
	\begin{tikzpicture}[scale=0.9]

%
\fill[pattern=north west lines, pattern color=black!50!green] (0,0) circle (1.2 cm);



%
\fill[pattern=north west lines, pattern color=blue, opacity=0.5] 
(-7,-5)--(-7,5)--(-5,5)--(-5,-5)
-- cycle;

%
\draw[->,line width=1, color=black](-7,0)--(5,0);
\draw[->,line width=1, color=black](0,-5)--(0,5);
\draw[line width=1, dashed, color=blue](-5,-5)--(-5,5);
%
%
\draw [line width=2, dashed, orange] (0,0) circle (4.5);
\draw [line width=2, dashed, black!50!green] (0,0) circle (1.2);


\node at (-5,0)[line width=6,blue]{$\bullet$};
\node at (-5.2,0)[line width=6, above right, blue]{\scriptsize $-\lambda$};

\node at (-2.2,0)[line width=6, red]{$\bullet$};
\node at (-1.2,0)[line width=6, red]{$\bullet$};
\node at (-2.2,2.7)[line width=6, red]{$\bullet$};
\node at (-2.2,-2.7)[line width=6, red]{$\bullet$};

\node at (-2.4,0)[line width=3, below, red]{\scriptsize $-\mu(\eta)_0$};
\node at (-2.2,2.7)[line width=3, below, red]{\scriptsize $-\mu(\eta)_+$};
\node at (-2.2,-1.9)[line width=3, below, red]{\scriptsize $-\mu(\eta)_-$};
\node at (-1.65,0.7)[line width=3, below, red]{\scriptsize $-\mu(\eta)_t$};

\node at (4.5,0)[line width=6,orange]{$\bullet$};

\end{tikzpicture}    
    \caption{Eigenvalues in the fractional regime $\alpha \in (4-\beta, 6+\beta)$: \\
    \centering $\mu(\eta)_0, \re(\mu(\eta)_\pm), \sim -\eta^{\frac{-4+\alpha+\beta}{1+\beta}}$, $\im (\mu(\eta)_\pm) \sim \pm \eta$, while $\mu(\eta)_t \sim \eta^2$, if $\alpha \in (4+\beta, 6+\beta)$ and $\mu(\eta)_t \sim \eta^{\frac{-2+\alpha+\beta}{1+\beta}}$ if $\alpha \in (4-\beta, 4+\beta)$.}
    \label{fig:evsfrac}
\end{center}
\end{figure}
	
%

\section{Construction of the fluid modes (Proof of Proposition \ref{p:fluidmodes})}\label{s:fluidmodes}

We consider the following eigenvalue problem 
\begin{equation}\label{evp_weight}
	L_{\eta}^* \phi_\eta := L^* \phi_{\eta} + i \eta (v \cdot \sigma) \phi_\eta = -\mu(\eta)    \wv^{-\beta} \phi_{\eta}\,,
\end{equation}
and aim to show existence of eigenpairs $(\phi_\eta, \mu(\eta)) \in L^2(\wv^{-\beta} \cM) \times \C$. 

\subsection{Reformulating the eigenvalue problem into a finite dimensional system}\label{ss:reformulate}
By defining $\psi_{\eta}:=\wv^{-\frac{\beta}{2}} \phi_\eta \in L^2(\cM)$ and multiplying \eqref{evp_weight} by $\wv^{\frac{\beta}{2}}$ we can reformulate the equation above into the equivalent eigenvalue problem
\begin{equation}\label{evp}
	\tilde{L}_{\eta}^* \psi_\eta := \tilde{L}^* \psi_\eta + i \eta (v \cdot \sigma) \wv^{\beta} \psi_\eta = -\mu(\eta)    \psi_{\eta}\,.
\end{equation}
where we search for solutions $(\psi_\eta, \mu(\eta)) \in L^2( \cM) \times \mathbb{C}$. For this we make use of Hypothesis \ref{hyp:decomp}, which states that the operator $\tilde{L}$ can be written as
\begin{align}
	\tilde{L} = \mathcal{K} -1\,,
\end{align}
 where $\mathcal{K}$ is compact in $L^2(\cM)$. We further decompose 
 \begin{align}\label{decompK}
 	\mathcal{K} =   \wv^{-\frac{\beta}{2}} \left(\cP + \cS \right) \wv^{\frac{\beta}{2}} \,,
 \end{align}
 by extracting $  \wv^{-\frac{\beta}{2}}$ times the $L^2(\wv^{-\beta} \cM)$-orthogonal projection $\cP$ onto $\cN(\cL)$, which is spanned by $\left\{1, v, \frac{|v|^2-3}{3}\right\}$. This decomposition defines the second term involving the operator $\cS$. Using this decomposition, we are able to reformulate the eigenvalueproben \eqref{evp} as
 \begin{equation}\label{evp2}
 	\frac{ \wv^{-\frac{\beta}{2}} \cP \left( \wv^{\frac{\beta}{2}} \psi_\eta \right)}{ \left(1- \mu(\eta)\right)  - i \eta \wv^{\beta}(v \cdot \sigma)} = \left(\operatorname{Id} - \frac{ \wv^{-\frac{\beta}{2}} \cS^* \left( \wv^{\frac{\beta}{2}} \cdot \right)}{ \left(1- \mu(\eta)\right) - i \eta \wv^{\beta}(v \cdot \sigma)}\right) \psi_{\eta}\,.
 \end{equation}
Using that $\cP$ is the $L^2(\wv^{-\beta} \cM)$-orthogonal projection onto $\cN(\cL)$, we further notice 
 \begin{align}
 	\wv^{-\frac{\beta}{2}}\cP \left(\wv^{\frac{\beta}{2}}\psi_{\eta}\right) = \wv^{-\frac{\beta}{2}}\cP \phi_{\eta} = \wv^{-\frac{\beta}{2}} \cE(v) \cdot \begin{pmatrix} \rho_{\phi_\eta, \wv^{-\beta}} \\ m_{\phi_\eta, \wv^{-\beta}} \\ \theta_{\phi_\eta, \wv^{-\beta}}\end{pmatrix} \,,
 \end{align}
 where $\cE(v) := \left(1,\, v,\, \frac{|v|^2 -3}{2}\right)^t$ and we defined as usual 
 \begin{align}\label{evp_coeff}
 	\begin{cases}
		&\rho_{\phi_\eta, \wv^{-\beta}}: = \int_{\R^3} \phi_\eta \wv^{-\beta} \cM \, \dd v = \langle \phi_\eta,\, 1\rangle_{-\beta}\,, \\
		&m_{\phi_\eta, \wv^{-\beta}}: = \int_{\R^3} v \phi_\eta \wv^{-\beta} \cM \, \dd v = \langle \phi_\eta,\, v \rangle_{-\beta}\,, \\
		&\theta_{\phi_\eta, \wv^{-\beta}}: = \int_{\R^3} \frac{|v|^2-3}{3} \phi_\eta  \wv^{-\beta} \cM \, \dd v = \left\langle  \phi_\eta,\, \frac{|v|^2-3}{3}\right \rangle_{-\beta} \,.
	\end{cases}
 \end{align}
Next, we prove that $\operatorname{Id} (\cdot) + \left((  - \mu(\eta)) -  i \wv^{\beta}\eta (v\cdot \sigma)\right)^{-1} \wv^{-\frac{\beta}{2}} S^* \left(\wv^{\frac{\beta}{2}}\cdot \right)$ is invertible for $\eta \ll1$. Therefore, we take use of the fact that $\wv^{-\frac{\beta}{2}} \cP\left(\wv^{\frac{\beta}{2}} \cdot \right)$ is compact in $L^2(\cM)$ due to its finite rank and, since $\mathcal{K}$ is assumed to be compact, the same holds true for $\wv^{-\frac{\beta}{2}} \cS^* \left(\wv^{\frac{\beta}{2}} \cdot \right)$. This allows us to prove the following Lemma (see also Lemma 3.3 in \cite{Ellis-Pinsky}).
 
 \begin{lemma}\label{l:sinv}
 	There exists a neighbourhood $\mathcal{U}_1 \times \mathcal{U}_2 \subset \R \times \C$ around $(0,0)$ such that for all $(\eta, \mu(\eta)) \in \mathcal{U}_1 \times \mathcal{U}_2$ the operator 
	 \begin{align}\label{d:B}
 		B_{(\eta, \mu(\eta))}[|v|, v \cdot \sigma] (\cdot) : =\operatorname{Id} (\cdot) + \left( \left(1- \mu(\eta)\right) -  i \wv^{\beta}\eta (v\cdot \sigma)\right)^{-1}  \wv^{-\frac{\beta}{2}} S^* \left(\wv^{\frac{\beta}{2}}\cdot \right)
	 \end{align}
	 is invertible in $L^2(\cM)$. 
 \end{lemma}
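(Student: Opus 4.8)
The plan is to treat $B_{(\eta,\mu(\eta))}$ as a compact perturbation of the identity and combine the Fredholm alternative with a continuity argument, following the scheme of Lemma~3.3 in \cite{Ellis-Pinsky}. Write $B_{(\eta,\mu(\eta))} = \operatorname{Id} + C_{(\eta,\mu(\eta))}$ with $C_{(\eta,\mu(\eta))} := \big((1-\mu(\eta)) - i\eta\wv^{\beta}(v\cdot\sigma)\big)^{-1}\,\wv^{-\beta/2}S^{*}(\wv^{\beta/2}\cdot)$. The scalar multiplier $\big((1-\mu(\eta))-i\eta\wv^{\beta}(v\cdot\sigma)\big)^{-1}$ is bounded on $\R^{3}$ by $(1-|\mu(\eta)|)^{-1}\le 2$ as soon as $(\eta,\mu(\eta))$ lies in a fixed small ball around $(0,0)$, while $\wv^{-\beta/2}S^{*}(\wv^{\beta/2}\cdot)$ is compact on $L^{2}(\cM)$ (as noted just before the statement, from Hypothesis~\ref{hyp:decomp} and the finite rank of the $\cP$-part). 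Hence $C_{(\eta,\mu(\eta))}$ is compact, so $B_{(\eta,\mu(\eta))}$ is invertible on $L^{2}(\cM)$ if and only if it is injective. It therefore suffices to: (1)~show $B_{(0,0)}$ is injective; (2)~show $(\eta,\mu(\eta))\mapsto B_{(\eta,\mu(\eta))}$ is norm-continuous at $(0,0)$; (3)~conclude by openness of the invertible operators.

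\emph{Step (1).} At $(\eta,\mu(\eta))=(0,0)$ the multiplier equals $1$; using the decomposition \eqref{decompK} together with $\tilde{L}=\cK-1$, a direct computation (as in the reduction leading to \eqref{evp2}) shows that $\psi\in\ker B_{(0,0)}$ if and only if $\tilde{L}^{*}\psi = \wv^{-\beta/2}\cP(\wv^{\beta/2}\psi)$. For such a $\psi$ I would write $\psi=\psi_{\parallel}+\psi_{\perp}$ with $\psi_{\parallel}:=\wv^{-\beta/2}\cP(\wv^{\beta/2}\psi)$; this is the $L^{2}(\cM)$-orthogonal projection of $\psi$ onto $V:=\wv^{-\beta/2}\operatorname{span}\{1,v,|v|^{2}\}$, since $f\mapsto\wv^{\beta/2}f$ is unitary from $L^{2}(\cM)$ onto $L^{2}(\wv^{-\beta}\cM)$ (on which $\cP$ is the orthogonal projection), so that $\wv^{-\beta/2}\cP(\wv^{\beta/2}\cdot)$ is self-adjoint and idempotent on $L^{2}(\cM)$; and $\psi_{\perp}\in V^{\perp}$. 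Because $L^{*}$ annihilates $\operatorname{span}\{1,v,|v|^{2}\}$ by Hypothesis~\ref{hyp:coercivity}, the operator $\tilde{L}^{*}=\wv^{\beta/2}L^{*}(\wv^{\beta/2}\cdot)$ annihilates $V$, so the kernel identity reduces to $\tilde{L}^{*}\psi_{\perp}=\psi_{\parallel}$; pairing with $\psi_{\perp}$ in $L^{2}(\cM)$ kills the right-hand side, so $\re\langle\tilde{L}^{*}\psi_{\perp},\psi_{\perp}\rangle=0$, and since $\re\langle\tilde{L}^{*}g,g\rangle=\re\langle\tilde{L}g,g\rangle$ the weighted coercivity in Hypothesis~\ref{hyp:coercivity} (applicable as $\psi_{\perp}\in V^{\perp}$) forces $\psi_{\perp}=0$. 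Then $\psi=\psi_{\parallel}\in V$ gives $\tilde{L}^{*}\psi=0$, while the kernel identity gives $\tilde{L}^{*}\psi=\psi_{\parallel}=\psi$; hence $\psi=0$, and with the compactness noted above, $B_{(0,0)}$ is invertible.

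\emph{Steps (2)--(3).} Write $B_{(\eta,\mu(\eta))}-B_{(0,0)}=(m_{(\eta,\mu(\eta))}-1)\,K_{0}$ with $K_{0}:=\wv^{-\beta/2}S^{*}(\wv^{\beta/2}\cdot)$ compact and $m_{(\eta,\mu(\eta))}(v):=\big((1-\mu(\eta))-i\eta\wv^{\beta}(v\cdot\sigma)\big)^{-1}$. The multiplication operators by $m_{(\eta,\mu(\eta))}$ are uniformly bounded on a small ball about $(0,0)$ and converge to $\operatorname{Id}$ strongly — indeed $m_{(\eta,\mu(\eta))}(v)\to1$ pointwise in $v$ as $(\eta,\mu(\eta))\to(0,0)$ — but \emph{not} uniformly in $v$, since for any fixed $\eta>0$ the factor stays far from $1$ at large velocities. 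This non-uniformity is the main obstacle, and I expect it to be resolved by the standard fact that a uniformly bounded, strongly convergent family of operators converges uniformly on relatively compact sets, hence converges in operator norm after composition with the fixed compact operator $K_{0}$; therefore $\|B_{(\eta,\mu(\eta))}-B_{(0,0)}\|\to 0$. Combined with the invertibility of $B_{(0,0)}$ from Step~(1) and the openness of the invertible operators in the bounded operators on $L^{2}(\cM)$, this produces a neighbourhood $\cU_{1}\times\cU_{2}$ of $(0,0)$ in $\R\times\C$ on which $B_{(\eta,\mu(\eta))}$ is invertible, which is the claim; alternatively, since $B_{(\eta,\mu)}$ depends analytically on $(\eta,\mu)$, one could close the argument with the analytic Fredholm theorem. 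The remaining care needed — besides this non-uniformity — is to keep track of the $\wv^{\beta}$-weights so that the boundedness of the multiplier and the compactness statement of Hypothesis~\ref{hyp:decomp} are applied in the weighted space $L^{2}(\cM)$.
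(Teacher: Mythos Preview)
Your proof is correct and follows the same Fredholm-alternative scheme as the paper, but the execution differs in two places worth noting. First, for injectivity at the origin, the paper pairs the identity $\tilde{L}f=\wv^{-\beta/2}\cP(\wv^{\beta/2}f)$ against the basis of $V=\wv^{-\beta/2}\operatorname{span}\{1,v,|v|^2\}$ to force $\cP(\wv^{\beta/2}f)=0$, hence $\tilde{L}f=0$ and $f\in V$, yielding $f=0$; you instead split $\psi=\psi_\parallel+\psi_\perp$ and invoke the weighted spectral gap of Hypothesis~\ref{hyp:coercivity} on $\psi_\perp$. Both work; your route uses a stronger hypothesis but is slightly more direct. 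Second, for the passage to small $(\eta,\mu)$, the paper argues by contradiction with a sequence $(\eta_n,\mu_n)\to(0,0)$ carrying normalized eigenfunctions $g_n$, extracts a limit via compactness of $\wv^{-\beta/2}\cS^*(\wv^{\beta/2}\cdot)$, and uses dominated convergence to show the limit is a nontrivial fixed point, contradicting Step~1. You replace this by the abstract observation that a uniformly bounded, strongly convergent family composed with a fixed compact operator converges in norm, so $B_{(\eta,\mu)}\to B_{(0,0)}$ in operator norm and openness of the invertibles finishes. This is cleaner and avoids the explicit sequence; the paper's hands-on extraction is essentially the same mechanism unpacked. Either way the non-uniform convergence of the multiplier $m_{(\eta,\mu)}$ is the genuine obstacle, and both arguments resolve it through compactness of $K_0$.
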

 
 \begin{proof}
 	We start by observing that the multiplication operator $\left( \left(1- \mu(\eta)\right) + i \eta \wv^{\beta}(v\cdot \sigma)\right)^{-1}$ is bounded and, thus, 
	$$
		\left(  \left(1- \mu(\eta)\right) -  i \wv^{\beta}\eta (v\cdot \sigma)\right)^{-1}   \wv^{-\frac{\beta}{2}} S^* \left(\wv^{\frac{\beta}{2}}\cdot \right)
	$$
	is compact in $L^2(\cM)$, given that it is a product of a compact and a bounded operator, which makes the \emph{Fredholm alternative} valid. Hence, for proving that $B$ is invertible, it suffices to show that $1$ cannot be an eigenvalue of 
	$$
		g \mapsto \left( \left(1- \mu(\eta)\right) -  i \wv^{\beta}\eta (v\cdot \sigma)\right)^{-1}  \wv^{-\frac{\beta}{2}} S^* \left(\wv^{\frac{\beta}{2}} g \right)
	$$
	for $(\eta, \mu(\eta))$ small enough. Let us therefore assume that we have a sequence $\left\{(\eta_n, \mu(\eta_n)) \right\}_n$ such that $\lim_{n \to \infty}(\eta_n, \mu(\eta_n)) = (0,0)$ and for every $n \in \mathbb{N}$ let $g_n \in L^2(\cM)$ be the solution to
	\begin{align}
		g_n = \frac{  \wv^{-\frac{\beta}{2}} S^* \left(\wv^{\frac{\beta}{2}} g_n \right)}{ \left(1- \mu(\eta_n)\right) -  i \eta_n \wv^{\beta}(v\cdot \sigma)}\,.
	\end{align}
	Due to compactness of $\wv^{-\frac{\beta}{2}} S^* \left(\wv^{\frac{\beta}{2}} \cdot \right)$ we know that $\lim_{n \to \infty}\wv^{-\frac{\beta}{2}} S^* \left(\wv^{\frac{\beta}{2}} g_n \right) = \tilde{g}$ for a $\tilde{g} \in L^2(\cM)$ and a subsequence $\{g_n\}_n$ labeled the same out of convenience. We reformulate
	\begin{align}
		&g_n =  \frac{1}{( \left(1- \mu(\eta_n)\right) + i \eta_n \wv^{\beta}(v\cdot \sigma)} \left( \wv^{-\frac{\beta}{2}} S^* \left(\wv^{\frac{\beta}{2}} g_n \right) -\tilde{g} \right) +  \frac{ \tilde{g}}{ \left(1- \mu(\eta_n)\right)+ i \eta_n \wv^{\beta}(v\cdot \sigma)} \,.
	\end{align}
The multiplication operator $\frac{1}{ \left(1- \mu(\eta_n)\right)+ i \eta_n \wv^{\beta}(v\cdot \sigma)}$ is point-wise bounded in $L^2(\cM)$, hence it converges strongly to the identity operator. We obtain 
\begin{align}
	 \| g_n - \tilde{g} \|^2_{L^2(\cM)} \leq& \int_{\R^3} \left(\frac{1}{ \left(1- \mu(\eta_n)\right) + i \eta_n \wv^{\beta}(v\cdot \sigma)}\right)^2 \left(\wv^{-\frac{\beta}{2}} S^* \left(\wv^{\frac{\beta}{2}} g_n \right)- \tilde{g}\right)^2 \cM \, \dd v \\
	+& \int_{\R^3} \left(\operatorname{Id} - \frac{1}{ \left(1- \mu(\eta_n)\right) + i \eta_n \wv^{\beta}(v\cdot \sigma)}\right)^2 \tilde{g}^2 \cM \, \dd v \,.
\end{align}
Since for $n$ sufficiently large one can estimate
\begin{align}
	\left| \frac{1}{ \left(1- \mu(\eta_n)\right) + i \eta_n \wv^{\beta}(v\cdot \sigma)} \right| \leq 2\,, \quad \forall \, v \in \R^3\,,
\end{align}
we can conclude
\begin{align}
	\lim_{n \to \infty}  \| g_n - \tilde{g} \|^2_{L^2(\cM)}  = 0\,.
\end{align}
	Combining this with $\lim_{n \to \infty}\wv^{-\frac{\beta}{2}} S^* \left(\wv^{\frac{\beta}{2}} g_n \right) = \tilde{g}$ leads us to the following conclusion
	\begin{align}
		\tilde{g} = \lim_{n \to \infty} \wv^{-\frac{\beta}{2}} S^* \left(\wv^{\frac{\beta}{2}} g_n \right) = \wv^{-\frac{\beta}{2}} S^* \left(\wv^{\frac{\beta}{2}} \tilde{g} \right)\,.
	\end{align}
	If $\tilde{g} \neq 0$, this would imply that 1 is an eigenvalue of $\wv^{-\frac{\beta}{2}} S^* \left(\wv^{\frac{\beta}{2}} \cdot \right)$. In order to show that this is not possible, we assume that $f \in L^2(\cM)$ in an eigenfunction of $\wv^{-\frac{\beta}{2}} S^* \left(\wv^{\frac{\beta}{2}} \cdot \right)$ to the eigenvalue 1. From Hypothesis \ref{hyp:decomp} and \eqref{decompK} we obtain for the operator $\tilde{L}$
	\begin{align}
		\tilde{L} f =   \wv^{-\frac{\beta}{2}} \cP \left( \wv^{\frac{\beta}{2}} f \right)\,.
	\end{align}
	Taking the inner product of the equation above with elements from $\left\{ \wv^{-\frac{\beta}{2}}, v  \wv^{-\frac{\beta}{2}},  \frac{|v|^2-3}{3}\wv^{-\frac{\beta}{2}} \right\}$ yields that $ \wv^{-\frac{\beta}{2}} f$ is orthogonal to the range of $ \wv^{-\frac{\beta}{2}} \cP$, hence $ \wv^{-\frac{\beta}{2}} \cP\left( \wv^{-\frac{\beta}{2}} f \right) = 0$, which implies $\tilde{L} f =0$ and leaves $f \equiv 0$ as only possibility. Hence, 1 is not an eigenvalue of $\wv^{-\frac{\beta}{2}} S^* \left(\wv^{\frac{\beta}{2}} \cdot \right)$, which implies that $\tilde{g} \equiv 0$ and finally the desired invertibility of $B_{(\eta, \mu(\eta))}[|v|, v \cdot \sigma] (\cdot)$.
 \end{proof}
 
Lemma \ref{l:sinv} allows us to rewrite the eigenvalue problem \eqref{evp2} as
 \begin{equation}
 	B_{(\eta, \mu(\eta))}[|v|, v \cdot \sigma]^{-1} \left( \frac{ \wv^{-\frac{\beta}{2}} \cP \left( \wv^{\frac{\beta}{2}} \psi_\eta \right)}{ \left(1- \mu(\eta)\right)  - i \eta \wv^{\beta}(v \cdot \sigma)} \right) =\psi_\eta \,,
 \end{equation}
 which is equivalent to
  \begin{equation}\label{evp3}
 	G_{(\eta, \mu(\eta))}[|v|, v \cdot \sigma] \left(\cP \phi_\eta  \right) = \phi_\eta \,,
 \end{equation}
 by defining the linear operator
 \begin{align}
 	G_{(\eta, \mu(\eta))}[|v|, v \cdot \sigma] := \wv^{\frac{\beta}{2} } B_{(\eta, \mu(\eta))}[|v|, v \cdot \sigma]^{-1}\frac{ \wv^{-\frac{\beta}{2}}}{ \left(1- \mu(\eta)\right)  - i \eta \wv^{\beta}(v \cdot \sigma)}\,.
 \end{align}
By multiplying \eqref{evp3} by each entry of $\tilde{\cE}(v) :=\left(1,\, v\,, \frac{|v|^2-3}{3}\right)^t$ times $\wv^{-\beta}\cM$ before we integrate it over $v \in \R^3$ and by and using the identities \eqref{evp_coeff}, we are now able to transform the eigenvalue problem \eqref{evp3} into a closed system of algebraic equations for the unknown 5 -3imensional vector $\cC = \left( \cC_0, \, \overrightarrow{\cC}, \, \cC_4\right):= \left(\rho_{\phi_\eta, \wv^{-\beta}},\, m_{\phi_\eta, \wv^{-\beta}},\, \theta_{\phi_\eta, \wv^{-\beta}}\right) \in \C^{5}$. It is given by
\begin{small}
\begin{equation}
	\begin{split}
	  &\cC_0 \left\langle G_{(\eta, \mu(\eta))}[|v|, v \cdot \sigma] \left(1)\right) ,\, 1\right\rangle_{-\beta} +  \left\langle G_{(\eta, \mu(\eta))}[|v|, v \cdot \sigma] \left(\overrightarrow{\cC} \cdot v \right) ,\, 1\right\rangle_{-\beta} \\
	 &+ \cC_4  \left\langle G_{(\eta, \mu(\eta))}[|v|, v \cdot \sigma] \left(\frac{|v|^2-3}{2} \right) ,\, 1\right\rangle_{-\beta}= \cC_0 \,,\label{c0}
	 \end{split}
\end{equation}
\begin{equation}
	\begin{split}
	 &\cC_0 \left\langle G_{(\eta, \mu(\eta))}[|v|, v \cdot \sigma] \left(1\right) ,\, v_i \right\rangle_{-\beta} +  \left\langle G_{(\eta, \mu(\eta))}[|v|, v \cdot \sigma] \left(\overrightarrow{\cC} \cdot v \right) ,\, v_i \right\rangle_{-\beta} \\
	 &+ \cC_4  \left\langle G_{(\eta, \mu(\eta))}[|v|, v \cdot \sigma] \left(\frac{|v|^2-3}{2} \right) ,\, v_i \right\rangle_{-\beta} = \cC_j \,, \quad j \in \{1,2,3\}\,, \label{ci}
	\end{split}
\end{equation}
\begin{equation}
\begin{split}
	& \cC_0 \left\langle G_{(\eta, \mu(\eta))}[|v|, v \cdot \sigma] \left(1)\right) ,\, \frac{|v|^2-3}{3}\right\rangle_{-\beta} +  \left\langle G_{(\eta, \mu(\eta))}[|v|, v \cdot \sigma] \left(\overrightarrow{\cC} \cdot v \right) ,\, \frac{|v|^2-3}{3} \right\rangle_{-\beta} \\
	 &+ \cC_4  \left\langle G_{(\eta, \mu(\eta))}[|v|, v \cdot \sigma] \left(\frac{|v|^2-3}{2} \right) ,\, \frac{|v|^2-3}{3}\right\rangle_{-\beta} =  \cC_4 \,, \label{c4}
	\end{split}
\end{equation}
\end{small}
which can be written in matrix form
\begin{equation}\label{algsyst}
		A_{(\eta, \mu(\eta))} \cC = \cC,
\end{equation}
by defining
	\begin{align}\label{def:A}
		&\Big(A_{(\eta, \mu(\eta))} \Big)_{jk}:= \left\langle G_{(\eta, \mu(\eta))}[|v|, v \cdot \sigma] \left(\cE_k(v)\right) ,\, \tilde{\cE}_j(v) \right\rangle_{-\beta}\,.
	\end{align}
A non-trivial solution to \eqref{algsyst} exists if and only if 
$$
	\det\Big(A_{(\eta, \mu(\eta))} -I_{5}\Big)=0\,,
$$
with $I_{5}$ being the identity matrix in $\C^{5 \times 5}$. But before investigating this implicit relation, we reduce the algebraic system \eqref{algsyst} to a system of just three unknowns $\left(\cC_0,\sigma \cdot \overrightarrow{\cC}, \cC_4 \right)^t$, where $\overrightarrow{\cC}:= \big(\cC_j\big)_{j=1}^3$, and a scalar equation. 
\begin{lemma}\label{l:syst_rot}
	The algebraic system \eqref{algsyst} 
	$$	
		A_{(\eta, \mu(\eta))} \cC = \cC,
	$$
	of size $5 \times 5$ can be transformed into \\
	\begin{align}
		&0=\left(\mathcal{A}_{(\eta, \mu(\eta))}  \right)\left(\cC_0,\sigma \cdot \overrightarrow{\cC}, \cC_d \right)^t \,, \label{A3long}\\
		&0 = \left(\cC_j - \sigma_j \sigma \cdot \overrightarrow{\cC} \right)\left\langle \left(G_{(\eta, \mu(\eta))}[|v|, v_1]-\operatorname{Id}\right)(v_2) \,, v_2 \right\rangle_{-\beta}\,, \quad j \in \{1,2,3\}\,, \label{A3trans}
	\end{align}
	where $\mathcal{A}_{(\eta, \mu(\eta))} \in \C^{3\times3}$ is given by
	\begin{equation}
		\left(\mathcal{A}_{(\eta, \mu(\eta))}\right)_{j,k} := \left \langle G_{(\eta, \mu(\eta))}[|v|, v_1]\left(E(v)_k\right) \,, \tilde{E}(v)_j\right\rangle_{-\beta} - \delta_{j,k} \,, 
	\end{equation}
	with $\delta_{j,k}$ being the Kronecker -delta and
	\begin{align}
		G[\eta, \mu(\eta)](|v|,v_1) \cdot :=  \wv^{\frac{\beta}{2} } B_{(\eta, \mu(\eta))}[|v|, v_1]^{-1}\left(\frac{ \wv^{-\frac{\beta}{2}}\cdot}{ \left( 1- \mu(\eta) \right) - i \eta \wv^{\beta}v_1}\right)\,
	\end{align}
	as well as
	\begin{align}\label{def:E}
		E(v) := \left(1, v_1, \frac{|v|^2-3}{2} \right)^t\,, \quad \quad \tilde{E}(v) := \left(1, v_1, \frac{|v|^2-3}{3}\right)^t\,.
	\end{align}
\end{lemma}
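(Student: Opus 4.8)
The plan is to exploit the rotational invariance of $L$ from Hypothesis~\ref{hyp:coercivity} in order to block-diagonalise the $5\times 5$ matrix $A_{(\eta,\mu(\eta))}$ of \eqref{def:A} according to the $O(2)$-symmetry of the perturbed eigenvalue problem \eqref{evp} about the axis $\sigma$. Since $L$ commutes with rotations in $v$, the matrices attached to two unit vectors $\sigma,\sigma'$ are conjugate through the corresponding rotation of $\R^3$, so I would first fix $\sigma=e_1$ without loss of generality, so that $v\cdot\sigma=v_1$ and $G_{(\eta,\mu(\eta))}=G_{(\eta,\mu(\eta))}[|v|,v_1]$. The structural fact to establish is that $G_{(\eta,\mu(\eta))}[|v|,v_1]$ commutes with the group $O(2)$ acting on the $(v_2,v_3)$-plane, i.e.\ with the rotations fixing $e_1$ together with the reflections $v_2\mapsto -v_2$ and $v_2\leftrightarrow v_3$. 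Reading off its definition, $G$ is a composition of: multiplication by the radial weights $\wv^{\pm\beta/2}$, which commute with all rotations of $\R^3$; multiplication by $\bigl((1-\mu(\eta))-i\eta\wv^{\beta}v_1\bigr)^{-1}$, which depends on $v$ only through $|v|$ and $v_1$ and is hence $O(2)$-invariant; and $B_{(\eta,\mu(\eta))}[|v|,v_1]^{-1}$. The invariance of this last factor follows from that of $B_{(\eta,\mu(\eta))}[|v|,v_1]$ in \eqref{d:B}, which reduces to the invariance of $\wv^{-\beta/2}\cS^{*}(\wv^{\beta/2}\cdot)$; unwinding \eqref{decompK}, the operator $\tilde{L}=\wv^{\beta/2}L(\wv^{\beta/2}\cdot)$ commutes with rotations because $L$ does and $\wv^{\beta/2}$ is radial, hence so does $\cK=\tilde{L}+1$, while $\cP$ commutes with rotations because it is the orthogonal projection, for a rotation-invariant inner product, onto the rotation-invariant subspace $\operatorname{span}\{1,v,|v|^2\}$; therefore $\cS=\wv^{\beta/2}\cK(\wv^{-\beta/2}\cdot)-\cP$ and its adjoint commute with all rotations of $\R^3$, in particular with the $O(2)$ above.

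With equivariance in hand, I would determine the action of $G$ on the basis $\cE(v)=(1,v_1,v_2,v_3,\frac{|v|^2-3}{2})$. The three longitudinal functions $1,v_1,\frac{|v|^2-3}{2}$ are $O(2)$-invariant, so $G(1),G(v_1),G(\frac{|v|^2-3}{2})$ are again invariant, i.e.\ functions of $|v|$ and $v_1$ that in particular are even in $v_2$ and in $v_3$. The span $\{v_2,v_3\}$ carries the standard two-dimensional representation; writing polar coordinates $(v_2,v_3)=r(\cos\theta,\sin\theta)$ with $r^2=|v|^2-v_1^2$, equivariance forces $G(v_2)$ to lie in the $e^{\pm i\theta}$ sector, and oddness in $v_2$ then forces $G(v_2)=a(|v|,v_1)\,v_2$ for some $O(2)$-invariant scalar function $a$, while the swap $v_2\leftrightarrow v_3$ gives $G(v_3)=a(|v|,v_1)\,v_3$ with the same $a$. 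Substituting these forms into $(A_{(\eta,\mu(\eta))})_{jk}=\langle G(\cE_k),\tilde\cE_j\rangle_{-\beta}$ and using that the measure $\wv^{-\beta}\cM\,\dd v$ is rotation invariant, every longitudinal–transversal entry vanishes (the integrand carries an odd power of $v_2$ or $v_3$), the transversal $2\times 2$ block equals $\langle G(v_2),v_2\rangle_{-\beta}\,I_2$ with vanishing off-diagonal, and the longitudinal $3\times 3$ block is precisely $\mathcal{A}_{(\eta,\mu(\eta))}+I_3$ expressed in the bases $E,\tilde E$ of \eqref{def:E}. All these inner products are finite: $\cE_k\in L^2(\wv^{-\beta}\cM)$ under $\alpha+\beta>4$ (cf.\ \eqref{c:constants}), the multiplier $\bigl((1-\mu(\eta))-i\eta\wv^{\beta}v_1\bigr)^{-1}$ is bounded by $2$ for $|\mu(\eta)|$ small, $B_{(\eta,\mu(\eta))}[|v|,v_1]^{-1}$ is bounded on $L^2(\cM)$ by Lemma~\ref{l:sinv}, and $\|\wv^{\beta/2}g\|_{-\beta}=\|g\|_{L^2(\cM)}$.

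It then remains to rewrite $A_{(\eta,\mu(\eta))}\cC=\cC$ in the asserted form. Splitting $\overrightarrow{\cC}=(\sigma\cdot\overrightarrow{\cC})\sigma+\overrightarrow{\cC}^{\perp}$ with $\overrightarrow{\cC}^{\perp}\perp\sigma$, the three longitudinal equations (those obtained by pairing \eqref{evp3} against $1$, $v\cdot\sigma$ and $\frac{|v|^2-3}{3}$ with weight $\wv^{-\beta}\cM$) involve only $(\cC_0,\sigma\cdot\overrightarrow{\cC},\cC_4)$ and, after moving the right-hand side to the left, read $\mathcal{A}_{(\eta,\mu(\eta))}(\cC_0,\sigma\cdot\overrightarrow{\cC},\cC_4)^t=0$, which is \eqref{A3long}; the transversal equations read $\langle G(v_2),v_2\rangle_{-\beta}\,\cC^{\perp}_j=\cC^{\perp}_j$, that is $\langle (G_{(\eta,\mu(\eta))}[|v|,v_1]-\operatorname{Id})(v_2),v_2\rangle_{-\beta}\,(\cC_j-\sigma_j\,\sigma\cdot\overrightarrow{\cC})=0$ for $j\in\{1,2,3\}$, using $\langle v_2,v_2\rangle_{-\beta}=1$ from \eqref{eq:Mnorm}, which is \eqref{A3trans} (in the frame $\sigma=e_1$ the equation for $j=1$ is the trivial identity $0=0$). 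The step I expect to cost the most care is the equivariance bookkeeping of the first paragraph: tracking rotational invariance through the decomposition \eqref{decompK} down to $\cS^{*}$, $B^{-1}$ and $G$, and then pinning down the exact tensorial shape $G(v_i)=a(|v|,v_1)\,v_i$ on the transversal components; once that is settled, the vanishing of the mixed blocks and the reduction to the $3\times 3$ system are routine parity computations.
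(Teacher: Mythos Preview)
Your proposal is correct and follows essentially the same route as the paper: the paper performs the explicit change of integration variable $v\mapsto\mathcal{O}v$ with $\mathcal{O}^{t}\sigma=e_1$ and then analyses each entry of $A_{(\eta,\mu(\eta))}$ case by case via parity, whereas you phrase the same argument as $O(2)$-equivariance of $G_{(\eta,\mu(\eta))}[|v|,v_1]$ in the $(v_2,v_3)$-plane after fixing $\sigma=e_1$. The underlying symmetry and parity considerations are identical; the only cosmetic difference is that the paper keeps $\sigma$ general throughout and rotates only the integration variable, which directly yields the statement with $\sigma$ still appearing in $\cC_j-\sigma_j\,\sigma\cdot\overrightarrow{\cC}$, while in your frame you must conjugate back at the end.
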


\begin{proof}
The proof relies on performing the coordinate change $v \mapsto \mathcal{O} v$ in each integral, where the rotation matrix $\mathcal{O}$ is specially chosen such that
	\begin{align*}
		\mathcal{O}^t \sigma = (1,0,\cdots,0)^t.
	\end{align*}
We notice that the above choice implies that the first column of $\mathcal{O}$ is given by $\sigma$ as well as that $v \mapsto \mathcal{O}v$ is a norm-preserving coordinate change. Moreover, the operator $G_{(\eta, \mu(\eta))}[|v|, v \cdot \sigma]$ acting on components of $\tilde{\cE}(v)$ in the integrand of each matrix entry in \eqref{def:A} depends on $v$ only in terms of $|v|$ and $v \cdot \sigma$, which are transformed into $|v|$ and $v_1$ after this rotation. We will deal with each term in \eqref{c0}, \eqref{ci} and \eqref{c4} separately.
\begin{itemize}
	\item We start with $\Big(A_{(\eta, \mu(\eta))} \Big)_{jk}$ for $j,k \in \{0,4\}$, i.e. the first and the last terms of \eqref{c0} and \eqref{c4}. In that case the integrand just depends on $|v|$ and $v \cdot \sigma$ due to $\cE_0(v)=\tilde{\cE}_0(v) = 1$ and $\cE_4(v)= \frac{|v|^2-3}{2} = \cE_4(|v|)$ and $\tilde{\cE}_4(v) = \frac{|v|^2-3}{3} = \tilde{\cE}_4(|v|)$ and we obtain 
\begin{align}
	\Big(A_{(\eta, \mu(\eta))} \Big)_{jk} \cC_k = \cC_k \left\langle G_{(\eta, \mu(\eta))}[|v|, v_1] \left(\cE_k(v)\right) ,\, \wv^{-\beta}\tilde{\cE}_j(v)\right\rangle\,,
\end{align}
which gives us the alternative formulation for each the first and the last term in \eqref{c0} and \eqref{c4}. 
	\item For the corresponding middle terms in \eqref{c0} and \eqref{c4} ($j=0,4$ and $k \in \{1,2,3\}$) a bit more care is needed. Indeed, recalling that the first column of $\mathcal{O}$ is given by $\sigma$, we have
\begin{align}
	\Big((A_{(\eta, \mu(\eta))})_{jk} \Big)_{k=1}^3\overrightarrow{\cC} =& \left\langle G_{(\eta, \mu(\eta))}[|v|, v_1] \left(\overrightarrow{\cC}  \cdot \mathcal{O} v\right) ,\, \tilde{\cE}_j(v)\right\rangle_{-\beta} \notag \\
	=& \left(\overrightarrow{\cC}\cdot \sigma\right) \left\langle G_{(\eta, \mu(\eta))}[|v|, v_1] \left(v_1\right) ,\, \tilde{\cE}_j(v)\right\rangle_{-\beta}\, \label{alg_long}\\
	+& \sum_{k=2}^3  \left(\mathcal{O}^t \cdot \overrightarrow{\cC} \right)_k \left\langle G_{(\eta, \mu(\eta))}[|v|, v_1] \left(v_k\right) ,\, \tilde{\cE}_j(v)\right\rangle_{-\beta}\,. \label{equ_trans} \notag
\end{align}
We notice that $\left\langle G_{(\eta, \mu(\eta))}[|v|, v_1] \left(v_k\right) ,\, \tilde{\cE}_j(v)\right\rangle_{-\beta} =0$ for $ k\in \{2,3\}$ and $j =0,4$, since $v_k$ is an odd function and the linear operator $G_{(\eta, \mu(\eta))}[|v|, v_1]$ acting on it only depends on the first velocity-component $v_1$, while all the other factors in the integral are even. Thus, we can conclude 
\begin{align}
	\Big((A_{(\eta, \mu(\eta))})_{jk} \Big)_{k=1}^3\overrightarrow{\cC}=\left(\overrightarrow{\cC}\cdot \sigma\right) \left\langle G_{(\eta, \mu(\eta))}[|v|, v_1] \left(v_1\right) ,\, \tilde{\cE}_j(v)\right\rangle_{-\beta} \,,
\end{align}
which makes the transformation of \eqref{c0} to 
\begin{equation}\label{c02}
	\begin{split}
	  &\cC_0 \left\langle G_{(\eta, \mu(\eta))}[|v|, v_1] \left(1)\right) ,\, 1\right\rangle_{-\beta} +  \left(\sigma \cdot \overrightarrow{\cC} \right) \left\langle G_{(\eta, \mu(\eta))}[|v|, v_1] \left(v_1 \right) ,\, 1\right\rangle_{-\beta} \\
	 &+ \cC_4 \left\langle G_{(\eta, \mu(\eta))}[|v|, v_1] \left(\frac{|v|^2-3}{2} \right) ,\, 1\right\rangle_{-\beta}= \cC_0 
	 \end{split}
\end{equation}
and of \eqref{c4} to
\begin{equation} \label{c42}
\begin{split}
	& \cC_0 \left\langle G_{(\eta, \mu(\eta))}[|v|, v_1] \left(1)\right) ,\, \frac{|v|^2-3}{3}\right\rangle_{-\beta} +  \left(\sigma \cdot \overrightarrow{\cC} \right) \left\langle G_{(\eta, \mu(\eta))}[|v|, v_1] \left(v_1 \right) ,\, \frac{|v|^2-3}{3} \right\rangle_{-\beta} \\
	 &+ \cC_4  \left\langle G_{(\eta, \mu(\eta))}[|v|, v_2] \left(\frac{|v|^2-3}{2} \right) ,\, \frac{|v|^2-3}{3}\right\rangle_{-\beta} =  \cC_4 
	\end{split}
\end{equation}

complete.
\end{itemize}
We now turn to equations \eqref{ci}:
\begin{itemize}
	\item  For $k=0,4$, $j \in \{1,2,3\}$ the first and the last term are given by $ (A_{(\eta, \mu(\eta))})_{jk}\cC_k$. We transform
\begin{equation}
\begin{split}
	&(A_{(\eta, \mu(\eta))})_{jk} = \left\langle G_{(\eta, \mu(\eta))}[|v|, v_1] \left(\cE_k(v)\right) ,\, \left(\mathcal{O}v\right)_j \right\rangle_{-\beta} \\
	&=  \sigma_j \left\langle G_{(\eta, \mu(\eta))}[|v|, v_1] \left(\cE_k(v)\right) ,\, v_1 \right\rangle_{-\beta} \,, \notag
\end{split}
\end{equation}
where also here we used the first column of $\mathcal{O}$ is given by $\sigma$ and for the other terms noticed that $ \left\langle G_{(\eta, \mu(\eta))}[|v|, v_1] \left(\cE_k(v)\right) ,\, v_j \right\rangle_{-\beta} = 0$ for $k=0,4$ and $j\in \{1,2,3\}$. 
\item Last, we deal with the terms of the form 
\begin{align}
	&\left\langle G_{(\eta, \mu(\eta))}[|v|, v \cdot \sigma] \left(\overrightarrow{\cC} \cdot v \right) ,\, v_j \right\rangle_{-\beta} \notag \\
	=& \sum_{k=1}^3 \cC_k \left\langle G_{(\eta, \mu(\eta))}[|v|, v_1] \left( \left(\mathcal{O}v\right)_k \right) ,\, \left(\mathcal{O}v\right)_j \right\rangle_{-\beta} \\
	=& \sum_{k=1}^3 \cC_k \sum_{m,l =1}^3 \mathcal{O}_{k,m} \mathcal{O}_{j,l} \left\langle G_{(\eta, \mu(\eta))}[|v|, v_1] \left( v_m \right) ,\, v_l \right\rangle_{-\beta} \notag
\end{align}
Also here we notice that if $m\neq l$ the corresponding integral in the sum gives zero due to oddness of the integrand. We calculate further using orthogonality of $\mathcal{O}$ and, as already before, that that it's first column is given by $\sigma$:
\begin{small}
\begin{align}
	&\left\langle G_{(\eta, \mu(\eta))}[|v|, v_1] \left(\overrightarrow{\cC} \cdot v \right) ,\, v_j \right\rangle_{-\beta} \\
	=& \sum_{k=1}^3 \cC_k  \sigma_k \sigma_j \left\langle G_{(\eta, \mu(\eta))}[|v|, v_1] \left( v_1 \right) ,\, v_1 \right\rangle_{-\beta}  + \sum_{k=1}^3 \cC_k \sum_{l =2}^3 \mathcal{O}_{k,l} \mathcal{O}_{j,l} \left\langle G_{(\eta, \mu(\eta))}[|v|, v_1] \left( v_l \right) ,\, v_l \right\rangle_{-\beta}\notag \\
	 =&\sum_{k=1}^3 \cC_k  \sigma_k \sigma_j \left\langle G_{(\eta, \mu(\eta))}[|v|, v_1] \left( v_1 \right) ,\, v_1 \right\rangle_{-\beta} + \sum_{k=1}^3 \cC_k \sum_{l =2}^3 \mathcal{O}_{k,l} \mathcal{O}_{j,l} \left\langle G_{(\eta, \mu(\eta))}[|v|, v_1] \left( v_2 \right) ,\, v_2 \right\rangle_{-\beta}  \notag \\
	 =& \sigma_j \overrightarrow{\cC} \cdot  \sigma \left\langle G_{(\eta, \mu(\eta))}[|v|, v_1] \left( v_1 \right) ,\, v_1 \right\rangle_{-\beta}  +  \sum_{k=1}^3 \cC_k \left( \sum_{l =1}^3 \mathcal{O}_{k,l} \mathcal{O}^{-1}_{l,j} - \sigma_k\sigma_j\right)  \left\langle G_{(\eta, \mu(\eta))}[|v|, v_1] \left( v_2 \right) ,\, v_2 \right\rangle_{-\beta} \notag \\
	=& \sigma_j \overrightarrow{\cC} \cdot  \sigma \left\langle G_{(\eta, \mu(\eta))}[|v|, v_1] \left( v_1 \right) ,\, v_1 \right\rangle_{-\beta}  + \sum_{k=1}^3 \cC_k \left( \delta_{k,j} - \sigma_k\sigma_j\right) \left\langle G_{(\eta, \mu(\eta))}[|v|, v_1] \left( v_2 \right) ,\, v_2 \right\rangle_{-\beta} \notag \\
	=& \sigma_j \left(\cC \cdot \sigma\right) \left\langle G_{(\eta, \mu(\eta))}[|v|, v_1](v_1)\,, v_1\right\rangle_{-\beta} +   \left(\cC_j - \sigma_j \sigma \cdot \overrightarrow{\cC} \right)\left\langle G_{(\eta, \mu(\eta))}[|v|, v_1](v_2) \,, v_2 \right\rangle_{-\beta} \notag \,,
\end{align}
\end{small}
which concludes the transformation of \eqref{ci}:
\begin{equation}\label{ci2}
	\begin{split}
	 &\cC_0 \sigma_k \left\langle G_{(\eta, \mu(\eta))}[|v|, v_1] \left(1\right) ,\, v_1 \right\rangle_{-\beta} +  \sigma_k \left(\cC \cdot \sigma\right) \left\langle G_{(\eta, \mu(\eta))}[|v|, v_1](v_1)\,, v_1\right\rangle_{-\beta} \\
	 &+  \left(\cC_k - \sigma_k \sigma \cdot \overrightarrow{\cC} \right)\left\langle G_{(\eta, \mu(\eta))}[|v|, v_1](v_2) \,, v_2 \right\rangle_{-\beta} \\
	 &+ \cC_4 \sigma_k \left\langle G_{(\eta, \mu(\eta))}[|v|, v \cdot \sigma] \left(\frac{|v|^2-3}{2} \right) ,\, v_1\right\rangle_{-\beta} = \cC_k \,, \quad k \in \{1,2,3\}\,. 
	\end{split}
\end{equation}
\end{itemize}
Finally, by multiplying \eqref{ci} by $\sigma_k$ before summation over $k \in \{1,2,3\}$ gives
\begin{equation}\label{csigma}
\begin{split}
	&\cC_0 \left\langle G_{(\eta, \mu(\eta))}[|v|, v_1] \left(1\right) ,\, v_1 \right\rangle_{-\beta} +  \left(\cC \cdot \sigma\right) \left\langle G_{(\eta, \mu(\eta))}[|v|, v_1] \left( v_1 \right) ,\, v_1 \right\rangle_{-\beta} \\
	 &+ \cC_4  \left\langle G_{(\eta, \mu(\eta))}[|v|, v_1] \left(\frac{|v|^2-3}{2} \right) ,\, v_1 \right\rangle_{-\beta} = \left(\cC \cdot \sigma\right)  \,.
\end{split}
\end{equation}
Equations \eqref{c02}, \eqref{csigma} and \eqref{c42} now form the reduced algebraic system \eqref{A3long}. The scalar equation \eqref{A3trans} is obtained after multiplying \eqref{csigma} by $\sigma_k$ and substracting it from \eqref{ci2}, which concludes the proof.
\end{proof}
Hence, in order to solve the eigenvalue problem \eqref{evp}, we need to find solutions $\mu=\mu(\eta)$ such that 
\begin{align}
		&0=\operatorname{det} \left(\mathcal{A}_{(\eta, \mu(\eta))} \right)\,, \label{evslong}\\
		&\text{and} \notag \\
		&0 = \left\langle \left(G_{(\eta, \mu(\eta))}[|v|, v_1]-\operatorname{Id}\right)(v_2) \,, v_2 \right\rangle_{-\beta}\,. \label{evstrans}
\end{align}

\subsection{Existence of the eigenvalues}\label{ss:existence}

We aim to apply the implicite function theorem to \eqref{evslong} and \eqref{evstrans} to evolve branches of solutions $\eta \mapsto \mu(\eta)$ around $(0,0) \in \R \times \C$. Due to the infinitness of higher moments of $\cM$, we expect to encounder problems regarding differentiability at $\eta =0$ of the functions due to the lack of integrability in certain parameter-regimes. To overcome this difficulty, we solve an approximation to the problem \eqref{algsyst}, which we obtain by multiplying \eqref{evp3} by each entry of $\tilde{\cE}(v) :=\left(1,\, v\,, \frac{|v|^2-3}{3}\right)^t$ times $\chi_R\left(v\right)\wv^{-\beta}\cM$ before we integrate it over $v \in \R^3$, with $0 \leq \chi_R \leq 1$ being a smooth and radially symmetric test-function such that $\chi_R \equiv 1$ on $B(0,R)$ and $\chi_R \equiv 0$ out of $B(0,2R)$. For $R > 1$ fixed, we obtain the following approximate problem
	\begin{equation*}\label{algsystR}
		A_{(\eta, \mu(\eta))}^R \cC = \operatorname{Er}(R)\,,
	\end{equation*}
where the entries of the matrix $A_{(\eta, \mu(\eta))}^R \in \C^{5} \times \C^{5}$ are defined by
	\begin{align}\label{def:AR}
		&\Big(A^R_{(\eta, \mu(\eta))} \Big)_{jk}:= \left\langle G_{(\eta, \mu(\eta))}[|v|, v \cdot \sigma] \Big(\cE_k(v)\Big)- \cE_k(v),\, \tilde{\cE}_j(v) \chi_R \left(v\right) \right\rangle_{-\beta} \notag \\
		=& \left \langle \wv^{\frac{\beta}{2} } B_{(\eta, \mu(\eta))}[|v|,v \cdot \sigma]^{-1}\left(\frac{\wv^{-\frac{\beta}{2}}\cE(v)_k\left(   \mu(\eta) + i \eta \wv^{\beta} \left(v\cdot \sigma\right) \right)}{ (1- \mu(\eta))  - i \eta \wv^{\beta}\left(v\cdot \sigma\right)}\right),\, \tilde{\cE}(v)_j \chi_R \left(v\right) \right \rangle_{-\beta} \,.
	\end{align}
As before, the equality is given by the reformulation using $S^*(\cE(v)_k)=0$ so pull the elements of $\cN(L)$ into $B_{(\eta, \mu(\eta))}[|v|,v \cdot \sigma]^{-1}$. The vector of errors is given by
\begin{align}\label{def:er}
	 \operatorname{Er}(R):= \begin{pmatrix} \langle \cP^{\perp} \phi_\eta,\, \chi_R \left(v\right) \rangle_{-\beta} \\  \langle \cP^{\perp} \phi_\eta,\, v \chi_R \left(v\right) \rangle_{-\beta} \\  \left\langle \cP^{\perp} \phi_\eta,\, \frac{|v|^2-3}{2} \chi_R \left(v\right)\right\rangle_{-\beta} \end{pmatrix} \sim \begin{pmatrix} R^{-\beta - \alpha} \\ R^{1-\beta - \alpha}\\ R^{2-\beta -\alpha} \end{pmatrix} \longrightarrow 0 \,, \quad \text{as } R \to \infty\,, \eta \to 0\,,
\end{align}
which makes it negligible for $R \gg 1$ and, thus, we aim to find solutions $\left(\mu(\eta)^R, \cC^R \right) \in \C \times \C^{5}$ to 
\begin{equation}\label{algsystR}
		A_{(\eta, \mu(\eta)^R)}^R \cC^R = 0\,.
	\end{equation}
Analogous to Lemma \ref{l:syst_rot}, this system \eqref{def:AR} can be transformed in a $(3 \times 3)$-system together with a scalar equation, which are given by
	\begin{align}
		&0=\mathcal{A}^R_{(\eta, \mu(\eta)^R)} \left(\cC_0^R,\, \sigma \cdot \overrightarrow{\cC^R},\, \cC_4^R\right)^t\,,  \label{evslongR}\\
		&\text{and} \notag \\
		&0 = \left\langle \left(G_{(\eta, \mu(\eta))}[|v|, v_1]-\operatorname{Id}\right)(v_2) \,, v_2 \chi_R \left(v\right) \right\rangle_{-\beta} \,, \label{evstransR}
	\end{align}
and where the entries of the matrix $\mathcal{A}^R_{(\eta, \mu(\eta)^R)} \in \C^3 \times \C^3$ are defined as
\begin{align}\label{detRentries}
	{\mathcal{A}^R_{(\eta, \mu(\eta)^R)}}_{jk}  := \ \left \langle B_{(\eta, \mu(\eta)^R)}[|v|,v_1]^{-1}\left(\frac{\wv^{-\frac{\beta}{2}}E(v)_k\left(   \mu(\eta)^R + i \eta \wv^{\beta}v_1 \right)}{ (1- \mu(\eta)^R)  - i \eta \wv^{\beta}v_1}\right),\, \tilde{E}(v)_j  \chi_R \left(v\right) \wv^{-\frac{\beta}{2} } \right \rangle\,.
\end{align}
Existence of solutions $\mu(\eta)^R$ to \eqref{evslongR} can be constructed implicitly, which is now possible due to the cut-off in the integrals defining the entries of the matrix $\mathcal{A}^R_{(\eta, \mu(\eta)^R)}$.

\begin{proposition}\label{p:evslong_frac}
	For $\eta \in \R$ sufficiently close to zero equation \eqref{evslong}
	\begin{align*}
		\mu(\eta) \mapsto \operatorname{det} \left(\mathcal{A}_{(\eta, \mu(\eta))} \right) =0\,,
	\end{align*}
	has exactly three solutions $\mu_0(\eta), \mu_{+}(\eta), \mu_-(\eta) \in \C$ fulfilling the conditions
	\begin{align*}
		\mu_l(0) = 0\,, \quad \mu_l'(0) = \begin{cases} 0\,, \quad &l=0\,, \\  \pm i D \,, \quad &l = \pm\,. \end{cases}
	\end{align*}
\end{proposition}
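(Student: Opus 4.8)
The plan is to follow the Ellis--Pinsky scheme, adapted to the fat tail of $\cM$, and to study the zeros in $\mu$ of $\mu\mapsto\det\mathcal{A}_{(\eta,\mu)}$ in a small disc about $\mu=0$ for $0<\eta\ll1$. First I would record two structural facts. On the one hand $\mathcal{A}_{(0,0)}=0$: the factor $z:=\mu+i\eta\wv^{\beta}v_1$ appearing in \eqref{detRentries} vanishes at $(0,0)$, and $G_{(0,0)}(E_k)=E_k$ because $\cS$ and $\cS^{*}$ both annihilate the hydrodynamic subspace $\operatorname{span}\{1,v,|v|^2\}$ (so that $B_{(0,0)}^{-1}$ fixes $\wv^{-\beta/2}E_k$ and $(B_{(0,0)}^{*})^{-1}$ fixes $\wv^{-\beta/2}\tilde E_j$, with $E,\tilde E$ as in \eqref{def:E}); together with $\langle E_k,\tilde E_j\rangle_{-\beta}=\delta_{jk}$ — which is exactly what the normalisations \eqref{eq:Mnorm} say — this yields $(\mathcal{A}_{(0,0)})_{jk}=\langle G_{(0,0)}(E_k),\tilde E_j\rangle_{-\beta}-\delta_{jk}=0$. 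Hence $\det\mathcal{A}_{(\eta,\mu)}$ vanishes to third order at $(0,0)$ and cannot be treated by the implicit function theorem directly. On the other hand $\mathcal{A}_{(\eta,\mu)}$ is only finitely differentiable at $\eta=0$, again because of the tail, so I would run the argument on the truncated matrix $\mathcal{A}^{R}_{(\eta,\mu)}$ of \eqref{detRentries}, which for each fixed $R\ge1$ is regular enough in $(\eta,\mu)$ near the origin — this is the purpose of the cut-off $\chi_R$ — construct its three ``fluid'' zeros $\mu^{R}_{0}(\eta),\mu^{R}_{\pm}(\eta)$, and finally let $R\to\infty$ using $\operatorname{Er}(R)\to0$ (which comes from Hypothesis \ref{hyp:amp}) to transfer everything to \eqref{evslong}.

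The key computation is the Taylor expansion of $\mathcal{A}^{R}$ at $(0,0)$. Since $\cS^{*}E_k=0$, only the differentiation of $\tfrac{z}{1-z}$ survives, and since $(B_{(0,0)}^{*})^{-1}$ fixes $\wv^{-\beta/2}\tilde E_j$ the ``$B^{-1}$-distortion'' disappears, leaving
\begin{equation*}
  \mathcal{A}^{R}_{(\eta,\mu)}=\mu\,I_{3}+i\eta\,N^{R}+O\big((|\eta|+|\mu|)^{2}\big),\qquad
  N^{R}_{jk}=\int_{\R^{3}}v_1\,E_k(v)\,\tilde E_j(v)\,\chi_R(v)\,\cM\,\dd v ,
\end{equation*}
and $N^{R}_{jk}\to N_{jk}:=\int_{\R^{3}}v_1\,E_k\,\tilde E_j\,\cM\,\dd v$ as $R\to\infty$, the limit being finite precisely when $\alpha>5$ (the most singular term behaving like $|v|^{5}\cM$) — this is where $\alpha>5$ is used. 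By the $v_1$-parity of $E_k,\tilde E_j$ and the rotation invariance of $L$, $N$ (and each $N^{R}$) couples only the $\sigma$-momentum component to the density and the energy, i.e.\ it is a checkerboard matrix, so $\operatorname{tr}N=0$, $\det N=0$, and $e_{2}(N)=-D^{2}$ with $D^{2}=\big(\int_{\R^{3}}v_1^{2}\,\cM\,\dd v\big)^{2}+\tfrac16\big(\int_{\R^{3}}v_1^{2}(|v|^{2}-3)\,\cM\,\dd v\big)^{2}>0$. Consequently the degree-three part of $\det\mathcal{A}^{R}_{(\eta,\mu)}$ is $\det(\mu I_{3}+i\eta N^{R})=\mu^{3}+D_{R}^{2}\eta^{2}\mu=\mu(\mu^{2}+D_{R}^{2}\eta^{2})$, whence
\begin{equation*}
  \det\mathcal{A}^{R}_{(\eta,\mu)}=\mu\,(\mu^{2}+D_{R}^{2}\eta^{2})+O\big((|\eta|+|\mu|)^{4}\big).
\end{equation*}

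For $\eta$ small the main term $\mu(\mu^{2}+D_{R}^{2}\eta^{2})$ has three simple zeros $0,\pm iD_{R}\eta$ at mutual distance $\sim D_{R}\eta$; a Rouch\'e argument on circles of radius $\sim D_{R}\eta/4$ gives exactly one zero of $\det\mathcal{A}^{R}_{(\eta,\cdot)}$ per circle. Each of the two outer zeros is a simple root of the holomorphic-in-$\mu$ function $\det\mathcal{A}^{R}_{(\eta,\cdot)}$, so the implicit function theorem yields branches $\mu^{R}_{\pm}(\eta)$, and the balance $\mu^{3}+D_{R}^{2}\eta^{2}\mu=O(\eta^{4})$ gives $\mu^{R}_{\pm}(\eta)=\pm iD_{R}\eta+O(\eta^{2})$. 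For the inner zero, $\det\mathcal{A}^{R}_{(\eta,0)}=O(\eta^{4})$ (because $\det N^{R}=0$, so its $\eta^{3}$-coefficient vanishes) while $\partial_{\mu}\det\mathcal{A}^{R}_{(\eta,0)}=D_{R}^{2}\eta^{2}+O(\eta^{3})$, so $\mu^{R}_{0}(\eta)=O(\eta^{2})$. Thus $\mu^{R}_{l}(0)=0$, $(\mu^{R}_{0})'(0)=0$, $(\mu^{R}_{\pm})'(0)=\pm iD_{R}$. Sending $R\to\infty$: $\operatorname{Er}(R)\to0$ uniformly for small $\eta$ (Hypothesis \ref{hyp:amp}) gives $\mathcal{A}^{R}_{(\eta,\mu)}\to\mathcal{A}_{(\eta,\mu)}$ uniformly on $\{|\eta|+|\mu|\le\rho\}$ and $D_{R}\to D$; a second Rouch\'e argument shows the three zeros persist as simple zeros $\mu_{0}(\eta),\mu_{\pm}(\eta)$ of \eqref{evslong}, and by the uniform control the value $0$ and the first derivatives $0,\pm iD$ at $\eta=0$ pass to the limit.

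The hard part is this last passage to the limit. In the fractional regime $\alpha<6+\beta$ the higher $\eta$-moments of $\cM$ are infinite, so the branches $\mu^{R}_{l}(\eta)$ must be controlled uniformly in $R$, and one has to check that the genuinely heavy-tailed contributions — those carried by the energy-coupled matrix entries, which a priori enter at the fractional power $\eta^{(\alpha+\beta-4)/(1+\beta)}$ — are of strictly smaller order than $\eta$ near $\mu_{\pm}$ and than $\eta^{2}$ near $\mu_{0}$, hence do not spoil the first-order asymptotics asserted here. The precise fractional rates beyond first order are the content of Proposition \ref{p:eigenvalues} and require the additional Hypothesis \ref{hyp:fluidm}; for the present statement only $\alpha>5$ is used, which is exactly what makes $N$ finite.
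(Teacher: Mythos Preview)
Your proposal is correct and follows essentially the same Ellis--Pinsky strategy as the paper: truncate to regain regularity, identify the cubic $\mu(\mu^2+D^2\eta^2)$ governing the leading behaviour, locate its three simple zeros, and pass $R\to\infty$. The one notable difference in presentation is that the paper desingularises via the explicit substitution $\mu=\eta\tilde\mu$, which pulls out a factor $\eta^3$ from $\det\mathcal A^R$ and leaves a function of $(\eta,\tilde\mu)$ that is $C^1$ at $\eta=0$ with three \emph{separated} simple roots $\tilde\mu\in\{0,\pm iD_R\}$; the implicit function theorem then applies directly at each root, and the ``exactly three'' count is obtained by a winding-number argument in the $\tilde\mu$-variable on a fixed contour. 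Your Taylor-plus-Rouch\'e packaging achieves the same thing but keeps the three roots collapsing together as $\eta\to0$, which forces the Rouch\'e radii to scale like $\eta$ and makes the remainder bound $O\big((|\eta|+|\mu|)^4\big)$ carry an $R$-dependent constant (the second $\eta$-derivative of $\mathcal A^R$ involves $\int\wv^{\beta}v_1^2\,E_k\tilde E_j\,\chi_R\,\cM$, which diverges like $R^{6+\beta-\alpha}$ in the fractional regime). The paper's substitution sidesteps this by moving the comparison to a fixed-scale neighbourhood of each $\tilde\mu_l$, though the paper is itself brief on why the implicit-function neighbourhood is uniform in $R$; your final paragraph correctly flags this as the delicate point.

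One small correction: your claim that $(B^*_{(0,0)})^{-1}$ fixes $\wv^{-\beta/2}\tilde E_j$ is not quite right as stated---the test function carries the cutoff $\chi_R$, and the clean annihilation is $\cS^*E_k=0$ on the \emph{other} slot---but the conclusion $N^R_{jk}\to\int v_1\,E_k\,\tilde E_j\,\cM\,\dd v$ holds up to errors $I_{j,k}(R)\to0$, exactly as in the paper's matrix \eqref{A30R}, which is all that is needed. Your formula $D^2=\big(\int v_1^2\cM\big)^2+\tfrac16\big(\int v_1^2(|v|^2-3)\cM\big)^2$ is the one that actually falls out of the determinant of \eqref{A30R}; the paper's display \eqref{d:D} appears to contain a minor misprint in the normalising constant.
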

\begin{proof}
	We start from the truncated system \eqref{evslongR} approximating \eqref{evslong} and set $\mu(\eta)^R =\eta\tilde{\mu}^R$, which allows us to extract a factor $\eta^3$ and, thus, we are left with investigating the equivalent problem
	\begin{align}\label{det2R}
		&0=\operatorname{det} \left(\mathcal{A}^R_{(\eta, \tilde{\mu}^R)} \right)\,,
	\end{align}
	where
	\begin{align*}
	{\mathcal{A}^R_{(\eta, \tilde{\mu}^R)}}_{jk}  := \ \left \langle B_{(\eta, \mu(\eta)^R)}[|v|,v_1]^{-1}\left(\frac{\wv^{-\frac{\beta}{2}}E(v)_k\left(   \tilde{\mu}^R + i \wv^{\beta}v_1 \right)}{ (1- \mu(\eta))  - i \eta \wv^{\beta}v_1}\right),\, \tilde{E}(v)_j  \chi_R \left(v\right) \wv^{-\frac{\beta}{2} } \right \rangle\,.
\end{align*}
Continuous differentiability in $(\eta, \tilde{\mu})$ up to $\eta=0$ is given due to the $\eta$-independent truncation of the integral. Next, we notice that the multiplication operator
	\begin{align*}
		v \mapsto \quad \frac{\left(   \tilde{\mu}^R + i  \wv^{\beta}v_1 \right)}{ (1- \mu(\eta)^R)  - i \eta \wv^{\beta}v_1}\,,
	\end{align*}
	occurs in each entry of the matrix $\mathcal{A}^R_{(\eta, \mu(\eta)^R)}$: in the argument of $B_{(\eta, \mu(\eta))}[|v|,v_1]^{-1}$ is is the factor the moment $\wv^{-\frac{\beta}{2}}E_k(v)$ is multiplied with. Separating it into its even part
	\begin{align*}
		v \mapsto \quad \frac{\tilde{\mu}^R (1-\mu(\eta)^R)-\eta \wv^{2\beta}v_1^2}{(1-\mu(\eta))^2+\eta^2\wv^{2\beta}v_1^2}\,,
	\end{align*}
	and its odd part
	\begin{align*}
		v \mapsto \quad \frac{i \wv^{\beta}v_1}{ (1-\mu(\eta))^2+\eta^2\wv^{2\beta}v_1^2}\,,
	\end{align*}
	allows us to split each matrixentry due to the linearity of $B_{(\eta, \mu(\eta))}[|v|,v_1]^{-1}$. With the separation of the multiplication operator into its even- and odd part the entries of the matrix in \eqref{det2R} are
	\begin{align}
		&{\mathcal{A}^R_{(\eta, \mu(\eta)^R)}}_{jk} \notag  \\
		&:=   \left \langle B_{\left(\eta, \mu(\eta)^R\right)}[|v|,v_1]^{-1}\left(\wv^{-\frac{\beta}{2}}E(v)_k\frac{\tilde{\mu}^R (1-\mu(\eta)^R)-\eta \wv^{2\beta}v_1^2}{ (1-\mu(\eta))^2+\eta^2\wv^{2\beta}v_1^2}\right),\,\tilde{E}(v)_j  \chi_R \left(v\right) \wv^{-\frac{\beta}{2} } \right \rangle\, \label{detReven}\\
		&+ i \left \langle B_{\left(\eta, \mu(\eta)^R\right)}[|v|,v_1]^{-1}\left(\wv^{-\frac{\beta}{2}} E(v)_k\frac{ \wv^{\beta}v_1}{ (1-\mu(\eta))^2+\eta^2\wv^{2\beta}v_1^2}\right),\, \tilde{E}(v)_j  \chi_R \left(v\right) \wv^{-\frac{\beta}{2} }  \right \rangle\,. \label{detRodd}
	\end{align}
	 We aim to apply the implicit function theorem at $\eta =0$, which corresponds to $(\eta, \mu(\eta)^R)= (\eta,\eta\tilde{\mu}^R)=(0,0) \in \R \times \C$. Therefore, we investigate the entries of $\mathcal{A}^R_{(\eta, \mu(\eta)^R)}$ and their behaviour as $\eta \to 0$ in more detail. Important to notice is that the multiplication operator $ \left( (1-\mu(\eta)) - \eta \wv^{-\beta}v_1\right)^{-1}$ as well as $ \left( (1-\mu(\eta))^2 +\eta^2 \wv^{-2\beta}v_1^2\right)^{-1}$ are pointwise bounded and converge strongly to the identity, thus 
	\begin{align}
		B_{(\eta, \mu(\eta))}[|v|,v_1](\cdot)=\operatorname{Id} (\cdot) + \frac{  \wv^{-\frac{\beta}{2}} S^* \left(\wv^{\frac{\beta}{2}}\cdot \right)}{ \left(1- \mu(\eta)\right) -  i \wv^{\beta}\eta (v\cdot \sigma)} \quad \longrightarrow \quad  \operatorname{Id}(\cdot) + \wv^{-\frac{\beta}{2}} S^*\left(\wv^{\frac{\beta}{2}} \cdot \right)\,,
	\end{align}
	strongly in $L^2(\cM)$ as $\eta \to 0$ (see also proof of Theroem \ref{l:sinv}). Moreover, by writing
	\begin{small}
	\begin{align*}
		&B_{(\eta, \mu(\eta))}[|v|,v_1](\cdot)=\operatorname{Id} (\cdot) + \frac{  \wv^{-\frac{\beta}{2}} S^* \left(\wv^{\frac{\beta}{2}}\cdot \right)}{ \left(1- \mu(\eta)\right) -  i \wv^{\beta}\eta (v\cdot \sigma)} \\
		=& \left(\operatorname{Id}(\cdot) + \wv^{-\frac{\beta}{2}} S^*\left(\wv^{\frac{\beta}{2}} \cdot \right)\right) - \left(1-\frac{1}{ \left(1- \mu(\eta)\right) -  i \wv^{\beta}\eta (v\cdot \sigma)}\right)\wv^{-\frac{\beta}{2}} S^* \left(\wv^{\frac{\beta}{2}}\cdot \right) \\
		=& \left(\operatorname{Id}(\cdot) + \wv^{-\frac{\beta}{2}} S^*\left(\wv^{\frac{\beta}{2}} \cdot \right)\right) \left[\operatorname{Id}(\cdot) - \left(\frac{ \mu(\eta) +  i \wv^{\beta}\eta (v\cdot \sigma)}{ \left(1- \mu(\eta)\right) -  i \wv^{\beta}\eta (v\cdot \sigma)}\right)  \left(\operatorname{Id}(\cdot) + \wv^{-\frac{\beta}{2}} S^*\left(\wv^{\frac{\beta}{2}} \cdot \right)\right)^{-1}\right. \\
		&\left. \wv^{-\frac{\beta}{2}} S^*\left(\wv^{\frac{\beta}{2}} \cdot \right)\right] \,,		
	\end{align*}
	\end{small}
	we can use the \emph{Neumann expansion} and can say the following about its inverse
	\begin{small}
	\begin{align}\label{c:Bconv}
		&B_{(\eta, \mu(\eta))}[|v|,v_1]^{-1}(\cdot) = \left[\operatorname{Id}(\cdot) - \left(\frac{ \mu(\eta) +  i \wv^{\beta}\eta (v\cdot \sigma)}{ \left(1- \mu(\eta)\right) -  i \wv^{\beta}\eta (v\cdot \sigma)}\right)  \left(\operatorname{Id}(\cdot) + \wv^{-\frac{\beta}{2}} S^*\left(\wv^{\frac{\beta}{2}} \cdot \right)\right)^{-1}\right. \notag\\
		&\left. \wv^{-\frac{\beta}{2}} S^*\left(\wv^{\frac{\beta}{2}} \cdot \right)\right]^{-1} \left(\operatorname{Id}(\cdot) + \wv^{-\frac{\beta}{2}} S^*\left(\wv^{\frac{\beta}{2}} \cdot \right)\right)^{-1}\notag\\
		=&\left[\sum_{k=0}^{\infty}  \left(\frac{ \mu(\eta) +  i \wv^{\beta}\eta (v\cdot \sigma)}{ \left(1- \mu(\eta)\right) -  i \wv^{\beta}\eta (v\cdot \sigma)}\right)^k \left(\left(\operatorname{Id}(\cdot) + \wv^{-\frac{\beta}{2}} S^*\left(\wv^{\frac{\beta}{2}} \cdot \right)\right)^{-1}\wv^{-\frac{\beta}{2}} S^*\left(\wv^{\frac{\beta}{2}}\right) \right)^{k}\right]\notag\\
		& \left(\operatorname{Id}(\cdot) + \wv^{-\frac{\beta}{2}} S^*\left(\wv^{\frac{\beta}{2}} \cdot \right)\right)^{-1} \notag\\
		=& \left(\operatorname{Id}(\cdot) + \wv^{-\frac{\beta}{2}} S^*\left(\wv^{\frac{\beta}{2}} \cdot \right)\right)^{-1} \notag\\
		&+\left[\sum_{k=1}^{\infty}  \left(\frac{ \mu(\eta) +  i \wv^{\beta}\eta (v\cdot \sigma)}{ \left(1- \mu(\eta)\right) -  i \wv^{\beta}\eta (v\cdot \sigma)}\right)^k \left(\left(\operatorname{Id}(\cdot) + \wv^{-\frac{\beta}{2}} S^*\left(\wv^{\frac{\beta}{2}} \cdot \right)\right)^{-1}\wv^{-\frac{\beta}{2}} S^*\left(\wv^{\frac{\beta}{2}}\right) \right)^{k}\right] \notag\\
		& \left(\operatorname{Id}(\cdot) + \wv^{-\frac{\beta}{2}} S^*\left(\wv^{\frac{\beta}{2}} \cdot \right)\right)^{-1} \,,
	\end{align}
	\end{small}
	where the terms in the sum converge pointwise to zero as $\eta \to 0$. Especially, we have for $\eta$ small enough 
	\begin{align*}
		\left|B_{(\eta, \mu(\eta))}[|v|,v_1]^{-1}(\cdot)\right| \leq (1+\delta)\left(\operatorname{Id}(\cdot) + \wv^{-\frac{\beta}{2}} S^*\left(\wv^{\frac{\beta}{2}} \cdot \right)\right)^{-1}\,, 
	\end{align*}
	with $\delta<1$.
	We start with the first term of the even part \eqref{detReven} 
	\begin{align*}
		\int_{\R^3} B_{\left(\eta, \mu(\eta)^R\right)}[|v|,v_1]^{-1}\left(\wv^{-\frac{\beta}{2}}E(v)_k\frac{\tilde{\mu}^R (1-\mu(\eta)^R)}{ (1-\mu(\eta))^2+\eta^2\wv^{2\beta}v_1^2}\right)\tilde{E}(v)_j  \chi_R \left(v\right) \wv^{-\frac{\beta}{2}} \cM \, \dd v\,,
	\end{align*}
	and use \eqref{c:Bconv} as well as the fact that 
	\begin{align*}
		\left|\frac{ (1-\mu(\eta)^R)}{ (1-\mu(\eta))^2+\eta^2\wv^{2\beta}v_1^2}\right|<2\,,
	\end{align*}
	for $\eta$ small enough to dominate the integrand in the following way
	\begin{small}
	\begin{align*}
		&B_{\left(\eta, \mu(\eta)^R\right)}[|v|,v_1]^{-1}\left(\wv^{-\frac{\beta}{2}}E(v)_k\frac{\tilde{\mu}^R (1-\mu(\eta)^R)}{ (1-\mu(\eta))^2+\eta^2\wv^{2\beta}v_1^2}\right)\tilde{E}(v)_j  \chi_R \left(v\right) \wv^{-\frac{\beta}{2}}\\
		& \leq 2\tilde{\mu}^R (1+\delta)  \left(\operatorname{Id}(\cdot) + \wv^{-\frac{\beta}{2}} S^*\left(\wv^{\frac{\beta}{2}} \cdot \right)\right)^{-1}\left(E_k(v)\wv^{-\frac{\beta}{2}}\right) \tilde{E}_j(v) \chi_R \left(v\right) \wv^{-\frac{\beta}{2}} \cM \,,
	\end{align*}
	\end{small}
	which is integrable. By dominated convergence we can deduce 
	\begin{align}\label{lim_even1}
		&\int_{\R^3} B_{\left(\eta, \mu(\eta)^R\right)}[|v|,v_1]^{-1}\left(\wv^{-\frac{\beta}{2}}E(v)_k\frac{\tilde{\mu}^R (1-\mu(\eta)^R)}{ (1-\mu(\eta))^2+\eta^2\wv^{2\beta}v_1^2}\right)\tilde{E}(v)_j  \chi_R \left(v\right) \wv^{-\frac{\beta}{2}} \cM \, \dd v \notag \\
		&\longrightarrow \tilde{\mu}^R \int_{\R^3}  \left(\operatorname{Id}(\cdot) + \wv^{-\frac{\beta}{2}} S^*\left(\wv^{\frac{\beta}{2}} \cdot \right)\right)^{-1}\left(E_k(v)\wv^{-\frac{\beta}{2}}\right) \tilde{E}_j(v) \chi_R \left(v\right) \wv^{-\frac{\beta}{2}} \cM \, \dd v \,,
	\end{align}
	where the integrand in the limit is the pointwise limit of the fromer. Moreover, we can split the limit integral such that
	\begin{align*}
		 &\tilde{\mu}^R \int_{\R^3}  \left(\operatorname{Id}(\cdot) + \wv^{-\frac{\beta}{2}} S^*\left(\wv^{\frac{\beta}{2}} \cdot \right)\right)^{-1}\left(E_k(v)\wv^{-\frac{\beta}{2}}\right) \tilde{E}_j(v) \chi_R \left(v\right) \wv^{-\frac{\beta}{2}} \cM \, \dd v \\
		=&\tilde{\mu}^R \int_{\R^3} E_k(v) \tilde{E}_j(v) \wv^{\beta} \cM \, \dd v \\
		&+ \tilde{\mu}^R \int_{\R^3}  \left(\operatorname{Id}(\cdot) + \wv^{-\frac{\beta}{2}} S^*\left(\wv^{\frac{\beta}{2}} \cdot \right)\right)^{-1}\left(E_k(v)\wv^{-\frac{\beta}{2}}\right) \tilde{E}_j(v) \left(\chi_R \left(v\right) -1\right)\wv^{-\frac{\beta}{2}} \cM \, \dd v \,,
	\end{align*}
	where due to orthogonality we have
	\begin{align*}
		\tilde{\mu}^R \int_{\R^3} E_k(v) \tilde{E}_j(v) \wv^{\beta} \cM \, \dd v =\begin{cases}\tilde{\mu}^R \quad &\text{if } k=j,\\ 0 \quad &\text{else,} \end{cases}
	\end{align*}
	while the second term converges to 0 as $R \to \infty$ du to the intergability of the integrand.
	Similarly by dominated convergence we obtain for the odd part \eqref{detRodd}
	\begin{align}\label{lim_odd}
		&\int_{\R^3} B_{\left(\eta, \mu(\eta)^R\right)}[|v|,v_1]^{-1}\left(\wv^{-\frac{\beta}{2}} E(v)_k\frac{ \wv^{\beta}v_1}{ (1-\mu(\eta))^2+\eta^2\wv^{2\beta}v_1^2}\right) \tilde{E}(v)_j  \chi_R \left(v\right) \wv^{-\frac{\beta}{2} } \cM \, \dd v \notag  \\
		&\longrightarrow \int_{\R^3} \left(\operatorname{Id}(\cdot) + \wv^{-\frac{\beta}{2}} S^*\left(\wv^{\frac{\beta}{2}} \cdot \right)\right)^{-1}\left( E_k(v)\wv^{\frac{\beta}{2}}\right)  \tilde{E}_j(v) \chi_R(v) \wv^{-\frac{\beta}{2} }\cM \, \dd v\,.
	\end{align}
	Also here we split the limit integral into
	\begin{align*}
		&\int_{\R^3} \left(\operatorname{Id}(\cdot) + \wv^{-\frac{\beta}{2}} S^*\left(\wv^{\frac{\beta}{2}} \cdot \right)\right)^{-1}\left(E_k(v)\wv^{\frac{\beta}{2}}\right)  \tilde{E}_j(v) \chi_R(v)\wv^{-\frac{\beta}{2} } \cM \, \dd v \\
		=&\int_{\R^3} E_k(v) \tilde{E}_j(v) \cM \, \dd v \\
		&+\int_{\R^3} \left(\operatorname{Id}(\cdot) + \wv^{-\frac{\beta}{2}} S^*\left(\wv^{\frac{\beta}{2}} \cdot \right)\right)^{-1}\left(E_k(v)\wv^{\frac{\beta}{2}}\right)  \tilde{E}_j(v) \left(\chi_R(v)-1\right)\wv^{-\frac{\beta}{2} } \cM \, \dd v\,,
	\end{align*}
	where for the first term it holds
	\begin{align*}
		\int_{\R^3} E_k(v) \tilde{E}_j(v) \cM \, \dd v = \begin{cases} 0 \quad &\text{if }j+k \text{ even,} \\\neq 0 \quad &\text{else,} \end{cases}
	\end{align*}
	while also the second is negligable for big $R$. What remains is to have a closer look at the second term of the even part \eqref{detReven}
	\begin{align*}
		&- \eta \int_{\R^3} B_{\left(\eta, \mu(\eta)^R\right)}[|v|,v_1]^{-1}\left(\wv^{-\frac{\beta}{2}}E(v)_k\frac{\wv^{2\beta}v_1^2}{ (1-\mu(\eta))^2+\eta^2\wv^{2\beta}v_1^2}\right) \tilde{E}(v)_j  \chi_R \left(v\right) \wv^{-\frac{\beta}{2}} \cM \, \dd v \\
		=&- \eta \int_{\R^3}\wv^{-\frac{\beta}{2}}E(v)_k\frac{\wv^{2\beta}v_1^2}{ (1-\mu(\eta))^2+\eta^2\wv^{2\beta}v_1^2} B_{\left(\eta, \mu(\eta)^R\right)}[|v|,v_1]^{-1,*}\left( \tilde{E}(v)_j  \chi_R \left(v\right) \wv^{-\frac{\beta}{2}}\right) \cM \, \dd v\,,
	\end{align*}
	which converges to 0 as $\eta \to 0$. This can be seen by dominating the integrand by
	\begin{align*}
		\wv^{-\frac{\beta}{2}}E(v)_k\wv^{2\beta}v_1^2 \left(\operatorname{Id}(\cdot) + \wv^{-\frac{\beta}{2}} S\left(\wv^{\frac{\beta}{2}} \cdot \right)\right)^{-1}\left( \tilde{E}(v)_j  \chi_R \left(v\right) \wv^{-\frac{\beta}{2}}\right) \cM\,,
	\end{align*}
	which is integrable for every $R<\infty$. Thus, again by dominated convergence we have
	\begin{align}\label{lim_even2}
		&- \eta \int_{\R^3} B_{\left(\eta, \mu(\eta)^R\right)}[|v|,v_1]^{-1}\left(\wv^{-\frac{\beta}{2}}E(v)_k\frac{\wv^{2\beta}v_1^2}{ (1-\mu(\eta))^2+\eta^2\wv^{2\beta}v_1^2}\right) \tilde{E}(v)_j  \chi_R \left(v\right) \wv^{-\frac{\beta}{2}} \cM \, \dd v \notag \\ 
		&\longrightarrow \quad 0\,, \quad \text{as } \eta \to 0\,, \quad R<\infty \text{ fixed.}
	\end{align}
	The matrix at the limit $\eta=0$ is of the form	
	\begin{align}\label{A30R}
		\mathcal{A}_{(0, \tilde{\mu})}^R = 
		\begin{pmatrix}
			 \tilde{\mu}^R(1+I_{0,0}(R)) & i \left(\langle 1,\,v_1^2 \rangle+I_{0,1}(R)\right ) &I_{0,2}(R)  \\
			 i \left(\langle1 ,\,v_1^2 \rangle+I_{1,0}(R)\right)  & \tilde{\mu}^R (1+I_{1,1}(R))&  i \left(\langle  v_1^2,\,\frac{|v|^2-3}{2} \rangle+I_{1,2}(R)\right) \\
			 I_{2,0}(R) & i \left(\langle  v_1^2,\,\frac{|v|^2-3}{3} \rangle+I_{2,1}(R)\right) & \tilde{\mu}^R(1+I_{2,2}(R))
		\end{pmatrix}\,, 
	\end{align}
	with
	\begin{align*}
		I_{j,k}(R):=\int_{\R^3}  \left(\operatorname{Id}(\cdot) + \wv^{-\frac{\beta}{2}} S^*\left(\wv^{\frac{\beta}{2}} \cdot \right)\right)^{-1}\left(E_k(v)\wv^{-\frac{\beta}{2}}\right) \tilde{E}_j(v) \left(\chi_R \left(v\right) -1\right)\wv^{-\frac{\beta}{2}} \cM \, \dd v\,,
	\end{align*}
	which converges to zero as $R \to \infty$. Computing the determinant of the matrix at $\eta=0$ we obtain
	\begin{align}\label{det0R}
		\operatorname{det}\left(\mathcal{A}_{(0, \tilde{\mu})}^R \right) = p_3(\tilde{\mu}^R, R)\,,
	\end{align}
	which is a polynomial of order 3 in $\tilde{\mu}^R$. At leading order (for large $R$) it has the shape
	\begin{align}\label{det0}
		p_3(\tilde{\mu}, \infty) = \tilde{\mu}^3  + \tilde{\mu}D^2\,,
	\end{align}
	with $D \in \R$ defined as
	\begin{align}\label{d:D}
		D^2: = \left\langle  v_1^2,\,   \right\rangle^2 +  \left\langle  v_1^2,\, \frac{|v|^2-3}{3}  \right\rangle^2  \neq 0\,.
	\end{align}
	The three roots of \eqref{det0} can be calculated as
	\begin{align}\label{roots}
		\tilde{\mu}_0 = 0, \quad \tilde{\mu}_{\pm}= \pm i D\,,
	\end{align}
	which implies that also $\operatorname{det}\left(\mathcal{A}_{(0, \tilde{\mu})}^R \right) = p_3(\tilde{\mu}^R, R)$ has three different roots $\tilde{\mu}^R_0, \tilde{\mu}^R_{\pm}$, which are, for $R$ big, small perturbations of \eqref{roots}. In order to apply the \emph{implicit function theorem} at these points $\left(0, \tilde{\mu}^R_{l}\right) \in \R \times \C$, $l \in \{0,\pm\}$, we must show that $\pa_{\tilde{\mu}} \operatorname{det}\left(\mathcal{A}_{(\eta, \tilde{\mu})}^R \right) \neq 0$ at these points. From the formulations \eqref{detReven}, \eqref{detRodd} we notice that the only terms where the two variables $\eta$ and $\tilde{\mu}$ commonly appear have the shape $\eta \tilde{\mu}$, which gives zero, when derived with respect to $\tilde{\mu}$ and evaluated at $\eta =0$. Thus, it suffices to look at $\pa_{\tilde{\mu}} \operatorname{det}\left(\mathcal{A}_{(\eta, \tilde{\mu})}^R \right)$ and calculate
	$$
		\pa_{\tilde{\mu}}\operatorname{det}\left(\mathcal{A}_{(0, \tilde{\mu}_{0,\pm})}^R \right) = \left(3(\tilde{\mu}^R)^2  + D^2 + p_2(\tilde{\mu}^R,R)\right)_{|_{\tilde{\mu} = \tilde{\mu}_{0,\pm}}}= \begin{cases} D^2+I_0(R)\, \quad &\text{for } \tilde{\mu}^r=\tilde{\mu}_0^R\,, \\ - 2D^2+I_\pm(R) \, \quad &\text{for }  \tilde{\mu}^R=\tilde{\mu}_{\pm}^R \,,\end{cases}  
	$$
	where $I_l(R)$ note error-terms, which vanish as $R \to \infty$, which specially implies that 
	$$
		\pa_{\tilde{\mu}}\operatorname{det}\left(\mathcal{A}_{(0, \tilde{\mu}_{0,\pm})}^R \right)  \neq 0\,, \quad \text{for }R>\bar{R}\,,
	$$ 
	where $\bar{R}$ is big enough, but of order one, such that the determinant stays non-degenerate for all $R>\bar{R}$. Hence, we can conclude from the \emph{implicit function theorem} that there exists neighbourhoods $U_0 \in \R$ and $U_{\pm} \in \R$ of $\eta =0$, uniform for $r > \bar{R}$, as well as unique $C^1$ functions $\eta \mapsto \tilde{\mu}_0^R(\eta)$, $\eta \mapsto \tilde{\mu}_{\pm}^R(\eta)$ such that $\operatorname{det}\left(\mathcal{A}_{(0, \tilde{\mu}_{0,\pm})}^R \right)=0$ for all $\eta \in U^{0,\pm}$. Hence, equation \eqref{evslongR} has three solutions $\mu_{l}^R(\eta)= \eta \tilde{\mu}_{l}^R(\eta)$. Moreover, we can show that these three solutions are the only ones in a neighbourhood around $(0,0) \in \R \times \C$. For this, we observe first that from the implicit function theorem we have that the mapping $\mu \mapsto \operatorname{det}\left(\mathcal{A}_{(\eta, \mu)}^R \right)$ is holomorphic in a certain neighbourhood $U_2$ for each fixed $\eta \in U_1$. Next, we notice that $\mu \mapsto \operatorname{det}\left(\mathcal{A}_{(\eta, \mu)}^R \right)$ is of the form $\mu \mapsto \mu^3 H(\mu)$ at $\eta=0$, with $H$ holomorphic and $H(0)=1 + I(R)$, with $I(R)\ll 1$ for $R \gg1$, which can be seen directly from the explicit formulation of the entries \eqref{detRentries} and remembering the normalisation and orthogonality conditions, which give 1 at the diagonal and zero in all other entries of the matrix under the determinant. Form this we can deduce that if we choose $\mu$ on a path of the complex plane, which is circles around the origin one time, the map $\mu \mapsto \operatorname{det}\left(\mathcal{A}_{(0, \mu)}^R \right)$ encircles it 3 times. Last, we notice that for all $\mu$ in a neighbourhood around the origin, the determinant $\operatorname{det}\left(\mathcal{A}_{(\eta, \mu)}^R \right)$ converges to the determinant $\operatorname{det}\left(\mathcal{A}_{(0, \mu)}^R \right)$. Hence, for $\eta$ sufficiently small also $\mu \mapsto \operatorname{det}\left(\mathcal{A}_{(\eta, \mu)}^R \right)$ encircles the origin exactly three times if $\mu$ encircles the origin once. From which we can deduce that $\operatorname{det}\left(\mathcal{A}_{(\eta, \mu)}^R \right)=0$ also has exactly three solutions.
	These three solutions have the following conditions at $\eta=0$: 
	 \begin{align*}
	 	&\mu_{l}(0)^R =0\,, \quad \mu_{l}^{R'}(0)  = \begin{cases} I_0(R)  \,\,&\text{ if } \, l = 0 \\ \pm i D +I_\pm(R)  \,\,&\text{ if } \, l = \pm  \,.\end{cases} 
	 \end{align*}
	The error-terms $I_{0,\pm}(R)$ vanish uniformly as $R \to \infty$ implying that the limits \newline $\mu_l(\eta):=\lim_{R \to \infty} \mu_{l}(\eta)^R$ exist. Furhter, remembering \eqref{def:er} let us conclude that the limits $\mu_l(\eta)$, $l \in \{0,\pm\}$ solve the original problem \eqref{evslong} without truncation. Their behaviour for $\eta$ close to zero is characterised by the one of $\mu(\eta)_l^R$ at leading order for $R \gg1$: 
	\begin{align*}
		&\mu_{l}(0) = 0\,, \quad \mu_{l}'(0)  = \begin{cases} 0  \,\,&\text{ if } \, l = 0 \\ \pm i D   \,\,&\text{ if } \, l = \pm  \,,\end{cases} 
	\end{align*}
	which concludes the proof.
	\end{proof}

\begin{proposition}\label{p:evstrans_frac}
For $\eta$ sufficiently close to zero, the equation \eqref{evstrans}
$$
	\mu(\eta) \mapsto \left\langle \left(G_{(\eta, \mu(\eta))}[|v|, v_1]-\operatorname{Id}\right)(v_2) \,, v_2 \right\rangle_{-\beta} =0\,,
$$
has a unique solution $\mu_t(\eta) \in \C$ fulfilling the conditions
\begin{align*}
	\mu_t(0) = 0\,, \quad \mu_t'(0) =  0\,. 
\end{align*}
\end{proposition}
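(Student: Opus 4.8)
The argument runs in close parallel to the proof of Proposition~\ref{p:evslong_frac}, the main difference being that one deals here with the single scalar equation \eqref{evstrans} instead of a $3\times3$ determinant. As there, the possibly infinite higher moments of $\cM$ obstruct differentiability in $\eta$ at $\eta=0$ (the naive $\eta$--derivative of the left-hand side of \eqref{evstrans} would involve $\wv^{\frac{\beta}{2}}v_1v_2$, which need not lie in $L^2(\cM)$ unless $\alpha-\beta>4$), so one first solves the $R$--truncated equation \eqref{evstransR}. Exactly as for the entries \eqref{def:AR} of $A^R$, one uses $\cS^*v_2=0$ (valid since $v_2\in\cN(L)$) to pull the kernel element into $B_{(\eta,\mu(\eta))}[|v|,v_1]^{-1}$, turning \eqref{evstransR} into
\begin{equation*}
  \left\langle B_{(\eta,\mu(\eta))}[|v|,v_1]^{-1}\left(\frac{\wv^{-\frac{\beta}{2}}v_2\,(\mu(\eta)+i\eta\wv^{\beta}v_1)}{(1-\mu(\eta))-i\eta\wv^{\beta}v_1}\right),\; v_2\,\chi_R\,\wv^{-\frac{\beta}{2}}\right\rangle=0.
\end{equation*}
Substituting $\mu(\eta)^R=\eta\tilde\mu^R$ (the same rescaling as in Proposition~\ref{p:evslong_frac}) and factoring out the $\eta$ in the numerator, this is, for $\eta\neq0$, equivalent to $F_R(\eta,\tilde\mu^R)=0$ with
\begin{equation*}
  F_R(\eta,\tilde\mu):=\left\langle B_{(\eta,\eta\tilde\mu)}[|v|,v_1]^{-1}\left(\wv^{-\frac{\beta}{2}}v_2\,\frac{\tilde\mu+i\wv^{\beta}v_1}{(1-\eta\tilde\mu)-i\eta\wv^{\beta}v_1}\right),\; v_2\,\chi_R\,\wv^{-\frac{\beta}{2}}\right\rangle ,
\end{equation*}
and the $\chi_R$--cutoff makes $F_R$ real-analytic in a neighbourhood of $\eta=0$ (all integrals are over $\{|v|\le2R\}$, and both the multiplier and the Neumann series \eqref{c:Bconv} for $B^{-1}$ are analytic in $(\eta,\tilde\mu)$ there).

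The crux is the evaluation at $\eta=0$. There $B_{(0,0)}[|v|,v_1]^{-1}$ fixes $\wv^{-\frac{\beta}{2}}v_2$ (again because $\cS^*v_2=0$), and the $\tilde\mu$--independent part $i\wv^{\beta}v_1$ of the multiplier contributes nothing: $\wv^{\beta}v_1v_2$ changes sign under the rotation $(v_1,v_2,v_3)\mapsto(-v_1,v_2,-v_3)$, under which $B_{(0,0)}^{-1}$, $v_2$, $\chi_R$ and $\cM$ are all invariant, so the associated integral vanishes. Hence
\begin{equation*}
  F_R(0,\tilde\mu)=c_R\,\tilde\mu\,,\qquad c_R:=\int_{\R^3}v_2^2\,\chi_R\,\wv^{-\beta}\cM\,\dd v\,,
\end{equation*}
with $c_R\to\int_{\R^3}v_2^2\wv^{-\beta}\cM\,\dd v=1$ as $R\to\infty$ by the normalisation \eqref{eq:Mnorm} (indeed $c_R>0$ for every $R\ge1$). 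Thus the limiting equation $F_R(0,\cdot)=0$ has the unique simple root $\tilde\mu=0$ and $\partial_{\tilde\mu}F_R(0,\cdot)\equiv c_R\neq0$.

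The implicit function theorem then applies at $(\eta,\tilde\mu)=(0,0)$ and yields, on a neighbourhood of $\eta=0$ uniform in $R$, a unique $C^1$ branch $\eta\mapsto\tilde\mu_t^R(\eta)$ with $\tilde\mu_t^R(0)=0$ and $F_R(\eta,\tilde\mu_t^R(\eta))=0$. That this is the only nearby solution follows, as in Proposition~\ref{p:evslong_frac}, from a winding--number argument: at $\eta=0$ the holomorphic-in-$\mu$ map $\mu\mapsto\langle(G_{(0,\mu)}[|v|,v_1]-\operatorname{Id})(v_2),v_2\chi_R\rangle_{-\beta}$ equals $\frac{\mu}{1-\mu}\,c_R$ (using $B_{(0,\mu)}^{-1}\wv^{-\frac{\beta}{2}}v_2=\wv^{-\frac{\beta}{2}}v_2$ once more), hence winds once around the origin when $\mu$ does, and this persists for $\eta$ small. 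Setting $\mu_t^R(\eta):=\eta\,\tilde\mu_t^R(\eta)$ gives $\mu_t^R(0)=0$ and $(\mu_t^R)'(0)=\tilde\mu_t^R(0)=0$. Finally one removes the truncation: since $v_2\in L^2(\wv^{-\beta}\cM)$ and $B^{-1}$ is bounded on $L^2(\cM)$, the quantity $\langle(G_{(\eta,\mu)}[|v|,v_1]-\operatorname{Id})(v_2),v_2\rangle_{-\beta}$ is finite and is the $R\to\infty$ limit of its truncated version; combining this with the vanishing of the truncation errors (as for \eqref{def:er}) shows that $\mu_t(\eta):=\lim_{R\to\infty}\mu_t^R(\eta)$ exists, solves \eqref{evstrans}, and satisfies $\mu_t(0)=0$ and $\mu_t'(0)=0$, the uniqueness near the origin being inherited via the same winding argument.

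The step I expect to be the main obstacle is the evaluation at $\eta=0$ in the second paragraph: it rests on correctly combining the kernel identity $B_{(0,0)}^{-1}\wv^{-\frac{\beta}{2}}v_2=\wv^{-\frac{\beta}{2}}v_2$ (equivalently $\cS^*v_2=0$), the rotational cancellation of the odd term $i\wv^{\beta}v_1v_2$, and the normalisation \eqref{eq:Mnorm}, and in doing the bookkeeping of the polynomial weights and of $B^{-1}$ without error; a secondary point requiring care is the justification, exactly as in Proposition~\ref{p:evslong_frac}, that the $R$--truncation can indeed be removed in the limit.
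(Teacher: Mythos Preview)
Your proof is correct and follows essentially the same approach as the paper: truncate, rescale $\mu=\eta\tilde\mu$, evaluate at $\eta=0$, apply the implicit function theorem, then remove the truncation. Your evaluation $F_R(0,\tilde\mu)=c_R\tilde\mu$ is in fact slightly cleaner than the paper's, which writes $F^R(0,\tilde\mu)=\tilde\mu+I(R)$ with an error $I(R)$ that is treated as $\tilde\mu$--independent even though it is not (computing it out gives $I(R)=\tilde\mu(c_R-1)$, hence the same conclusion $F^R(0,\tilde\mu)=c_R\tilde\mu$ and unique root $\tilde\mu=0$). Your addition of the winding--number argument for uniqueness mirrors what the paper does in Proposition~\ref{p:evslong_frac} but omits in its proof of the present proposition, so your version is if anything more complete on that point.
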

\begin{proof}
 	As before, we start from the truncated equation \eqref{evstransR} approximating \eqref{evstrans} and set $\mu(\eta)^R =\eta\tilde{\mu}^R$, which allows us to extract a factor $\eta$ and, thus, we are left with investigating the equivalent problem
	\begin{align}\label{eq:scalar_transR}
			0 = \left\langle B_{(\eta, \eta\tilde{\mu})}[|v|, v_1]^{-1}\left(\frac{\tilde{\mu}\wv^{-\frac{\beta}{2}}v_2 + i  \wv^{\frac{\beta}{2}}v_1v_2}{\left(1-\eta\tilde{\mu}\right) - i \eta \wv^{\beta}v_1} \right) \,, v_2 \chi_R(v)\wv^{-\frac{\beta}{2}} \right\rangle = : F^R\left(\eta, \tilde{\mu}\right)\,.
	\end{align}
	Continuous differentiability in $(\eta, \tilde{\mu})$ up to $\eta=0$ is given due to the $\eta$-independent truncation of the integral. At $\eta = 0$ is of the form
		\begin{align*}
			&F\left(0,\tilde{\mu}\right)= \left\langle B_{(0, 0)}[|v|, v_1]^{-1}\left(\tilde{\mu}\wv^{-\frac{\beta}{2}}v_2 + i   \wv^{\frac{\beta}{2}}v_1v_2\right),\, v_2 \wv^{-\frac{\beta}{2}} \right\rangle +I(R)\\
				&= \tilde{\mu} \left\langle B_{(0, 0)}[|v|, v_1]^{-1} \left(\wv^{-\frac{\beta}{2}}v_2 \right),\,\wv^{-\frac{\beta}{2}}v_2 \right \rangle + i  \left\langle B_{(0, 0)}[|v|, v_1]^{-1}\left(  \wv^{\frac{\beta}{2}}v_1v_2\right) ,\, v_2 \wv^{-\frac{\beta}{2}} \right\rangle + I(R) \\
				&= \tilde{\mu}^R + I(R) \,,
		\end{align*}
	where 
	\begin{align*}
		I(R):=\left\langle B_{(0, 0)}[|v|, v_1]^{-1}\left(\tilde{\mu}\wv^{-\frac{\beta}{2}}v_2 + i   \wv^{\frac{\beta}{2}}v_1v_2\right),\, v_2 \left(\chi_R(v)-1\right)\wv^{-\frac{\beta}{2}} \right\rangle\,,
	\end{align*}
	which due to integrability of the integrand, tends to zero as $R\to\infty$. That the second term in the equation above vanishes due to oddness of the integrand, while the first term is equal to $\tilde{\mu}$, since $\wv^{-\frac{\beta}{2}}v_2$ is invariant under $B_{(0, 0)}[|v|, v_1]$ and due to the normalisation conditions \eqref{eq:Mnorm}. Hence, $F^R\left(0,\tilde{\mu}^R\right)=0$ has $\tilde{\mu}^R = -I(R)$ as only solution. We now apply here the \emph{implicit function theorem} to $F^R\left(\eta, \tilde{\mu}^R\right)$ at $(0,-I(R)) \in \R \times \C$. It is clear that $F^R(0,-I(R))=0$. Also here, since in $F^R(\eta, \tilde{\mu}^R)$ the only terms where the two variables $\eta$ and $\tilde{\mu}^R$ have the shape $\eta \tilde{\mu}^R$, which gives zero, when derived with respect to $\tilde{\mu}^R$ and evaluated at $\eta =0$, it suffices to check that $\pa_{\tilde{\mu}^R}F^R(0, \tilde{\mu}^R)_{|_{\tilde{\mu}^R=0}} 1\neq 0$ uniformly in $R$. In virtue of the implicit function theorem there exist a neighbourhood $U^t \subset \R$ around 0 (uniformly in $R$), such that there exists a unique $C^1$ function $\eta \mapsto \tilde{\mu}_t^R(\eta)$ such that $F^R(\eta, \tilde{\mu}_t^R(\eta))=0$ for all $\eta \in U^t$. Hence, equation \eqref{evstrans} has a unique solution $\mu_t^R(\eta) = \eta \tilde{\mu}_t^R(\eta)$ fulfilling
		\begin{align*}
			\mu_t(0) = 0\,, \quad \mu_t'(0)=-I(R)\,.
		\end{align*}
The error-term $I_(R)$ vanishes uniformly in $\eta$ as $R \to \infty$ implying that the limits $\mu_t(\eta):=\lim_{R \to \infty} \mu_{l}(\eta)^R$ exist for all $\eta \in U^t$. Further, remembering \eqref{def:er} let us conclude that the limit $\mu_t(\eta)$ solves the original problem \eqref{evstrans} without truncation. Its behaviour for $\eta$ close to zero is characterised by the one of $\mu(\eta)_t^R$ at leading order for $R \gg1$: 
	\begin{align*}
		&\mu_{t}(0) = 0\,, \quad \mu_{t}'(0)  = -I(R)\,,
	\end{align*}
	which concludes the proof.
\end{proof}

\begin{remark}\label{r:symm}
	Exploiting symmetry properties of our problem, we can make even more precise statements about the three eigenvalues, solutions to \eqref{evslong}. Namely,
	$$
		\overline{\det{\left(\mathcal{A}_{(\eta, \mu)} - I_3 \right)}} = \det{\left(\mathcal{A}_{(-\eta, \overline{\mu})} - I_3 \right)} = \det{\left(\mathcal{A}_{(\eta, \overline{\mu})} - I_3 \right)}\,,
	$$
	from which it follows that if $\mu(\eta)$ is a solution, also $\overline{\mu}(\eta)$ and $\overline{\mu}(-\eta)$ have to be solutions to \eqref{evslong}. Thus
	\begin{align}\label{symmdet}
		\mu_0(\eta) \in \R\,, \quad \mu_{\pm}(\eta) = \overline{\mu_{\mp}(\eta)}\,.
	\end{align}
	Moreover, we observe for \eqref{evstrans}
	\begin{align*}
		0=& \overline{\left\langle \left(G_{(\eta, \mu(\eta))}[|v|, v_1]-\operatorname{Id}\right)(v_2) \,, v_2 \right\rangle_{-\beta}} = \left\langle \left(G_{(\eta, \overline{\mu(\eta)})}[|v|, -v_1]-\operatorname{Id}\right)(v_2) \,, v_2 \right\rangle_{-\beta} \\
		=& \left\langle \left(G_{(\eta, \mu(\eta))}[|v|, v_1]-\operatorname{Id}\right)(v_2) \,, v_2 \right\rangle_{-\beta}\,,
	\end{align*}
	and see that if $\mu(\eta)$ is a solution, then also $\overline{\mu(\eta)}$ has to be a solution. Thus, we have 
	\begin{align*}
		\mu(\eta)_t \in \R\,,
	\end{align*}
	since regarding the above Proposition \ref{p:evstrans_frac} equation \eqref{evstrans} has exactly one solution. 
	\end{remark}
	\begin{remark}\label{r:c}
		With Proposition \ref{p:evslong_frac} and Proposition \ref{p:evstrans_frac} we showed that as soon as $\alpha+\beta >4$, $\beta>-1$ and $\alpha>4$ there exist eigenvalues, denoted by $\mu_0(\eta)$, $\mu_+(\eta)$, $\mu_-(\eta)$, $\mu_t(\eta)$, for $\eta$ sufficiently close to zero. Knowing the coefficients $\cC^{l}(\eta) \in \C^{5}$, $ l \in \{0, \pm, t\}$ such that $\cP \phi_{\eta, l} = \cC^l(\eta) \cdot \cE (v)$ defines the corresponding eigenmode by \eqref{evp3}. The coefficients can be computed from \eqref{A3long} and \eqref{A3trans}. Indeed, for $\mu(\eta)_l$, with $l \in \{0,\pm\}$ we know due to Proposition \ref{p:evslong_frac} that a solution $\left(\cC_0^l(\eta), \tilde{\cC}^l(\eta)\cdot \sigma, \cC_4^l(\eta)\right)$ to \eqref{A3long} exists for $\eta$ sufficiently close to zero. Setting $\cC_i^l(\eta) = \sigma_i \tilde{\cC}^l(\eta)\cdot \sigma$ completes the vector in $\C^{5}$ and, thus, gives the corresponding eigenmode $\phi_{\eta}^l$ to the eigenvalue $\mu(\eta)_l$. For the eigenvalue $\mu(\eta)_t$ every $\cC^t(\eta)$ with $\cC_0^t (\eta)= \cC_4^t(\eta) \equiv0$ and $\overrightarrow{\cC}^t(\eta)$ such that $\overrightarrow{\cC}^t(\eta) \cdot \sigma = 0$ gives a solutionvector. Since $\sigma$ is a non-zero 3-imensional vector, this leaves us with 2 possibilities for $\cC^t(\eta)$ (hence for $\phi_{\eta, t}$), which are pairwise orthogonal to each other. Thus, $\mu(\eta)_t$ has algebraic multiplicity 2. In total we showed that there exist exactly 5 eigenpairs $\left(\mu(\eta)_l, \phi_{\eta, l}\right)$.
	\end{remark}
	
	\begin{remark}
		Let us remark at this point an imprecision in the work of Ellis and Pinsky \cite{Ellis-Pinsky}. In their analysis they assumed that the coefficients $\cC(\eta)$ in the projection of the fluid modes $\cP\left(\phi_{\eta,l}\right)= \cC(\eta) \cdot \cE(v)$ are real. Indeed, as our analysis in the upcoming Section \ref{ss:shape} shows, this is in fact not the case. even more, the interplay between their real- and imaginary parts will be of great importance.
	\end{remark}
		
	\subsection{Scalar estimate on the branches}\label{ss:scalarestimate}
	
	We remember equation \eqref{evp_weight}
	\begin{align*}
		L_{\eta}^* \phi_\eta := L^* \phi_{\eta} + i \eta (v \cdot \sigma) \phi_\eta = -\mu(\eta)    \wv^{-\beta} \phi_{\eta}\,,
	\end{align*}
	the eigenvalue problem, fulfilled by each of the $5$ eigenpairs $(\mu(\eta)_l, \phi_{\eta,l})$, which's existence is proved in the section above. We integrate \eqref{evp_weight} against $\bar{\phi_{\eta}}\cM$ and take the real part, which yields
	\begin{align}
		\operatorname{Re}\left(\mu(\eta)\right) \left\|   \phi_\eta \right\|_{-\beta}^2 = -\operatorname{Re} \langle L^* \phi_{\eta},\, \phi_\eta \rangle \geq \lambda \left\| \phi_\eta - \cP \phi_\eta \right\|_{-\beta}^2 \sim \lambda \left\|   \phi_\eta - \cP    \phi_\eta \right\|_{-\beta}^2\,,
	\end{align}
	where we used the weighted coercivity property (Hypothesis \ref{hyp:coercivity}). Further, due to $\left\|  \phi_\eta \right\|_{-\beta}^2 = \left\|   \phi_\eta  - \cP   \phi_\eta \right\|_{-\beta}^2 + 1$ we obtain the following convergence rate of the fluid modes $\phi_\eta$ towards their projections $\cP \phi_\eta$:
	\begin{align}\label{decay}
		\left\|   \phi_\eta -  \cP   \phi_\eta \right\|_{-\beta} \lesssim \operatorname{Re}\left(\mu(\eta)\right)^{\frac{1}{2}}\,.
	\end{align} 
	
	\subsection{Shape of the eigenmodes}\label{ss:shape}
	 We presuppose the following orthogonality and normality conditions of the fluid modes:
	\begin{equation}\label{c:orth}
	\begin{split}
		&0 = \left \langle \operatorname{Re}\left(\phi_{\eta,l}\right),\, \operatorname{Im}\left(\phi_{\eta,l}\right)  \right \rangle_{-\beta} \,, \quad l \in \{0,\pm,t\}\,, \\
		& 0 = \left \langle \phi_{\eta,l},\, \phi_{\eta,k}  \right \rangle_{-\beta} \,, \quad l \neq k \,,
	\end{split}
	\end{equation}
	and
	\begin{equation}\label{c:norm}
		1= \left\|\cP(\phi_{\eta,l}) \right\|_{-\beta} = \left\|\operatorname{Re}\left(\cP(\phi_{\eta,l})\right) \right\|_{-\beta} + \left\|\operatorname{Im}\left(\cP(\phi_{\eta,l})\right) \right\|_{-\beta}\,, \quad l \in \{0,\pm,t\}\,.
	\end{equation}
	Moreover, from the symmetry of the eigenvalue problem \eqref{evp_weight} we can further deduce 
	\begin{align*}
		 L^* \overline{\phi}_{\eta,l}^{\vee} + i \eta (v \cdot \sigma) \overline{\phi}_{\eta,l}^{\vee} = -\overline{\mu(\eta)_l}   \wv^{-\beta} \overline{\phi}_{\eta,l}^{\vee}\,,
	\end{align*}
	where we denoted $\phi_{\eta,l}^{\vee}(v) = \phi_{\eta,l}(-v)$. Hence, if $\mu(\eta)_l$ is an eigenvalue to the fluid mode $\phi_{\eta,l}$, the complex conjugate $\overline{\mu(\eta)_l}$ is also an eigenvalue, corresponding to the fluid mode $\overline{\phi}_{\eta}^{\vee}$. In combination with Remark \ref{r:symm}, this observation yields the following additional information to \eqref{symmdet}:
	\begin{equation}\label{c:symm}
	\begin{split}
		&\mu(\eta)_l \in \R, \quad \overline{\phi}^{\vee}_{\eta,l} = \phi_{\eta,l}\,, \quad l \in \{0,1,2\}\,, \\
		&\mu(\eta)_{\pm} = \overline{\mu(\eta)_{\mp}}, \quad \overline{\phi}^{\vee}_{\eta,\pm}= \phi_{\eta,\mp},
	\end{split}
	\end{equation}
	and further 
	\begin{align*}
		\operatorname{Re}\left( \phi_{\eta,l} \right) = \operatorname{Re}\left( \phi^{\vee}_{\eta,l} \right)\, , \quad \operatorname{Im}\left( \phi_{\eta,l} \right) = -\operatorname{Im}\left( \phi^{\vee}_{\eta,} \right)\,, \quad  l \in \{0,1,2\}\,,
	\end{align*}
	hence, evenness of $\operatorname{Re}\left( \phi_{\eta,l} \right)$ and oddness of $\operatorname{Im}\left( \phi_{\eta,l} \right)$ as well as for the wave-eigenmodes $\phi_{\eta,\pm}$ we can deduce
	\begin{align*}
		\operatorname{Re}\left( \phi_{\eta,+} \right) = \operatorname{Re}\left( \phi^{\vee}_{\eta,-} \right)\, , \quad \operatorname{Im}\left( \phi_{\eta,+} \right) = -\operatorname{Im}\left( \phi^{\vee}_{\eta,-} \right)\,.
	\end{align*}
	From this, one can deduce, respectively for each eigenvalue $\mu(\eta)_l$, the corresponding symmetry properties of the scalar coefficients $\cC^l(\eta) \in \mathbb{C}^{5}$, which are solutions to the algebraic system \eqref{algsyst} and which are fulfilling $\cP(\phi_{\eta, l}) = \cC^l(\eta) \cdot \cE(v)$. We start with the transversal waves $\phi_{\eta,t}$, $t \in \{1,2\}$, where the projection parts $\cP \phi_{\eta,t}=  \cC^t(\eta) \cdot \cE(v)$ do - per construction - not have even terms (see Remark \ref{r:c}). Thus we know
	\begin{align} \label{Ct}
		\cC^t(\eta) \in \C^3 \setminus \R^3\,, \text{for } \eta \geq 0\,.
	\end{align}
	Further, we have the following information about $\phi_{\eta,0}$ and the wave-eigenvalues $\phi_{\eta,\pm}$:
	\begin{align}
		&\cC^0(\eta)\cdot \cE = \overline{\cC^0}(\eta) \cdot \cE^\vee, \quad \text{hence} \quad C_0^0(\eta), C_4^0(\eta) \in \R,\,\,\, \overline{\overrightarrow{C^0}}(\eta) = - \overrightarrow{C^0}(\eta)\,, \label{C0} \\
		&\cC^{\pm}(\eta)\cdot \cE = \overline{\cC^{\mp}} (\eta)\cdot \cE^\vee, \quad \text{hence} \quad C_0^{\pm}(\eta)= \overline{C_0^{\mp}}(\eta),\, \,\,C_4^{\pm}(\eta)= \overline{C_4^{\mp}}(\eta),\,\,\, \overrightarrow{C^{\pm}}(\eta) = - \overline{\overrightarrow{C^{\mp}}}(\eta). \label{Cpm}
	\end{align}
	For the purely imaginary coefficients we use the notation
	$$
		\overrightarrow{C^l}(\eta) = i \tilde{C}^l(\eta) \in \C^3\setminus \R^3\,, \quad l \in \{0,1,2\}\,.
	$$
	In particular we have for $ \phi_{\eta,0}$
	\begin{equation}\label{c:C0}
		\begin{split}
			&\operatorname{Re}(\cP(\phi_{\eta,0})) = \operatorname{Re}\left(\cC^0(\eta)\right) \cdot \cE(v) = C_0^0(\eta) + \frac{|v|^2-3}{2} C_d^0(\eta) \\
			&\operatorname{Im}(\cP(\phi_{\eta,0})) = \operatorname{Im}\left(\cC^0(\eta)\right) \cdot \cE(v) = v \cdot \tilde{C}^0(\eta)\,.
		\end{split}
	\end{equation}	
		
	\subsubsection{Eigenmodes at $\eta=0$}\label{sss:coefat0}
		Having a closer look at the proof of Proposition \ref{p:evslong_frac}, we can gain information about  $\cC(0)^l\cdot \cE(v) = \cP\phi_{0,l}=\lim_{\eta \to 0} \phi_{\eta,l}$, $l \in \{0,\pm\}$, the limit of the fluid modes at $\eta \to 0$. The matrix in \eqref{A30R} in the limit $R \to \infty$ at $\eta=0$ is of the form
		\begin{align}\label{A30}
		\mathcal{A}_{(0, \tilde{\mu})} = 
		\begin{pmatrix}
			 \tilde{\mu} & i \langle 1 ,\,v_1^2 \rangle &0  \\
			 i \langle1 ,\,v_1^2 \rangle  & \tilde{\mu} &  i\langle  v_1^2,\,\frac{|v|^2-3}{2} \rangle \\
			0 & i \langle  v_1^2,\,\frac{|v|^2-3}{3} \rangle & \tilde{\mu}
		\end{pmatrix}\,,
	\end{align}
		and provides further information about the limits $\lim_{\eta \to 0}\cC^l(\eta)=:\cC^l(0)$ in the following way: 
		\begin{itemize}
			\item The coefficients $\cC^{\pm}(0)$ should fulfill
			\begin{align*}
				\mathcal{A}_{(0, \tilde{\mu})}\left(\cC^0_\pm(0),\, (\overrightarrow{\cC_\pm}(0) \cdot \sigma),\, \cC_\pm^4(0)  \left(\sigma\right)\right)^t = 0\,, \quad \text{for } \tilde{\mu}=\tilde{\mu}_{\pm}=\pm i D\,.
			\end{align*}
			First we notice that the matix has purely imaginary entries, which implies that the imaginary and real part of $\cC(0)_{\pm}$ fulfill the same equation. Due to the orthogonality condition \eqref{c:orth} we can choose $\cC(0)_{\pm} \in \R$. Combining this finding with \eqref{Cpm} we know
			\begin{align}\label{Cpm0}
				\cC_+^0(0)=\cC_-^0(0)\,, \quad \cC_+^4(0)=\cC_-^4(0)\,, \quad \overrightarrow{\cC_+}(0)=- \overrightarrow{\cC_-}(0)\,.
			\end{align}
			\item The limit $\cC^0(0)$ corresponds to the coefficients of $\cP(\phi_{\eta,0}) = \cC^0(\eta)\cdot\cE(v)$ as $\eta \to 0$. Hence, $\left(\cC_0^0(0),\, (\overrightarrow{\cC_0}(0) \cdot \sigma),\, \cC_4^0(0)  \left(\sigma\right)\right)$ should fulfill
			\begin{align*}
				\mathcal{A}_{(0, \tilde{\mu})}\left(\cC_0^0(0),\, (\overrightarrow{\cC_0}(0) \cdot \sigma),\, \cC_0^4(0)  \left(\sigma\right)\right)^t = 0\,, \quad \text{for } \tilde{\mu}=\tilde{\mu}_0=0\,,
			\end{align*}
			which implies 
			\begin{align}\label{C00}
				\overrightarrow{\cC_0}(0)=0\,, \quad \text{and} \quad \cC^0_0(0) \langle ,\,v_1^2 \rangle + \cC_0^4(0)\left\langle v_1^2,\,\frac{|v|^2-3}{2} \right\rangle =0\,.
			\end{align}
			Thus, at leading order the fluid mode $\phi_{\eta,0}$ fulfills
			$$
				\phi_{0,0}(v)=\cC^0_0(0) + \frac{|v|^2-3}{2} \cC_0^4(0)\,, \cC^0_0(0), \cC^4_0(0) \in \R\,.
			$$
			\item The limit at $\eta=0$ of the coefficients of the transversal waves is given by $\left(0,\overrightarrow{\cC}_t(0),0\right)$, where $\overrightarrow{\cC}_t(0)$, $t \in \{1,2\}$ corresponds to two vectors in $\C^3$, which are orthogonal to each other and to $\sigma$ and which are normalised.
			\end{itemize}
			In total, by combining the orthogonality and normality conditions \eqref{c:orth}, \eqref{c:norm} with \eqref{Cpm0} and \eqref{C00} we see that the leading order coefficients of $\phi_{\eta,0,\pm}$ have to fulfill the following system
			\begin{align*}
				&1= \left(\cC_0^0(0)\right)^2 + \frac{3}{2}\left(\cC_0^4(0)\right)^2\,, \\
				&1=\left(\cC_\pm^0(0)\right)^2 + \left(\overrightarrow{\cC_\pm}(0)\right)^2 + \frac{3}{2}\left(\cC_\pm^4(0)\right)^2\,, \\
				&0= \left(\cC_\pm^0(0)\right)^2 - \left(\overrightarrow{\cC_\pm}(0)\right)^2 + \frac{3}{2}\left(\cC_\pm^4(0)\right)^2\,, \\
				&0= \cC_0^0(0)\cC_\pm^0(0) + \frac{3}{2} \cC_0^4(0) \cC_\pm^4(0)\,.
			\end{align*}
			Algebraic manipulation of the system above reveals that the fluid modes $\phi_{\eta,l}$, $l \in \{0,\pm\}$ at leading order are of the form
			\begin{align}
				&\phi_{0,0} =  -\sqrt{\frac{2}{5}} + \frac{|v|^2 -3}{\sqrt{10}}\,, \label{phi00}\\
				&\phi_{0,+} = \sqrt{\frac{3}{10} }+ \frac{1}{\sqrt{2}}\left(v \cdot \sigma\right) + \frac{|v|^2 -3}{\sqrt{30}} \,,  \label{phi0p}\\
				&\phi_{0,-} =  \sqrt{\frac{3}{10} } - \frac{1}{\sqrt{2}}\left(v \cdot \sigma\right) + \frac{|v|^2 -3}{\sqrt{30}} \,, \label{phi0m} 
			\end{align}
			while the transversal waves have the form
			\begin{align}
				&\phi_{0,1} =  \overrightarrow{\cC}_1(0) \cdot v \,, \label{phi01}\\
				&\phi_{0,2} =  \overrightarrow{\cC}_2(0) \cdot v \,. \label{phi02}
			\end{align}

\section{Scaling of the eigenvalues (Proof of Proposition \ref{p:eigenvalues})}\label{s:scaling}
	
	As can already be seen in Proposition \ref{p:evslong_frac}, the real and the imaginary parts of the eigenvalues $\{\mu(\eta)\}_l$, $l \in \{\pm, 0, 1,2\}$ are expected to have significantly different convergence behaviour as $\eta \to 0$. In order to follow each of their parts closely, we split the eigenvalue problem \eqref{evp_weight} into its real an imaginary part, where we omit the index $l$, since the following considerations hold for a generic eigenmode and eigenvalue pair $\left(\phi_\eta, \mu(\eta)\right)$:
	\begin{align}
		&L^* \operatorname{Re}(\phi_{\eta}) - \eta (v \cdot \sigma) \operatorname{Im}(\phi_{\eta}) = -\operatorname{Re}(\mu(\eta))   \wv^{-\beta} \operatorname{Re}(\phi_{\eta}) + \operatorname{Im}(\mu(\eta))  \wv^{-\beta}\operatorname{Im}(\phi_{\eta}) \label{Re}\,,\\
		&L^* \operatorname{Im}(\phi_{\eta}) + \eta (v \cdot \sigma) \operatorname{Re}(\phi_{\eta}) = -\operatorname{Re}(\mu(\eta))   \wv^{-\beta} \operatorname{Im}(\phi_{\eta,l}) - \operatorname{Im}(\mu(\eta))  \wv^{-\beta}\operatorname{Re}(\phi_{\eta}) \label{Im}\,.
	\end{align}
	In order to obtain information about $\operatorname{Im}\left(\mu(\eta)\right)$ we multiply \eqref{Re} by $\eta^{-1}\operatorname{Im}\left(\cP(\phi_\eta)\right)\cM$, \eqref{Im} by $\eta^{-1}\operatorname{Re}\left(\cP(\phi_\eta)\right)\cM$ before integrating over the velocity space and adding the two resulting equations. This yields
	\begin{align}\label{Im_limit}
		\frac{\operatorname{Im}(\mu(\eta))}{\eta} = -\int_{\R^3} (v \cdot \sigma) \left[\operatorname{Im}\left(\phi_\eta\right)\operatorname{Im}\left(\cP(\phi_\eta)\right) + \operatorname{Re}\left(\phi_\eta\right)\operatorname{Re}\left(\cP(\phi_\eta)\right) \right]\cM \, \dd v\,.
	\end{align}
	Since due to \eqref{decay} we know that $\phi_\eta \to \cP(\phi_0)=\phi_0$ as $\eta \to 0$, we expect the integrand on the right-hand-side to converge to $(v \cdot \sigma)\left|\phi_0\right|^2 = (v \cdot \sigma)\left|\cC(0) \cdot \cE(v)\right|^2$, where
	\begin{align*}
		\cC(0) = \lim_{\eta \to 0}\cC(\eta)\,, \quad \phi_0 := \lim_{\eta \to 0} \phi_\eta\,,
	\end{align*}
	which also gives the limit, which will be proved in Section \ref{ss:Im}.
	An equation for $\operatorname{Re}\left(\mu(\eta)\right)$ can be obtained by multiplication of \eqref{Re} by $\eta^{-1}\operatorname{Re}\left(\cP(\phi_\eta)\right)\cM$ and of \eqref{Im} by $\eta^{-1}\operatorname{Im}\left(\cP(\phi_\eta)\right)\cM$, which yields after integration over $v \in \R^3$:
	\begin{align}\label{Re_limit}
		\frac{\operatorname{Re}(\mu(\eta))}{\eta} = - \int_{\R^3} (v \cdot \sigma) \left[\operatorname{Im}\left(\phi_\eta\right)\operatorname{Re}\left(\cP(\phi_\eta)\right) - \operatorname{Re}\left(\phi_\eta\right)\operatorname{Im}\left(\cP(\phi_\eta)\right) \right]\cM \, \dd v\,.
	\end{align}
	Here, we expect the right-hand-side to converge to $\operatorname{Im}\left(\phi_0\right)\operatorname{Re}\left(\phi_0\right) - \operatorname{Re}\left(\phi_0\right)\operatorname{Im}\left(\phi_0\right)=0$. Hence, a closer investigation of the behaviour in the regime $\eta \ll 1$ of the integrals defining the $\operatorname{Re}\left(\mu(\eta)\right)$ needs to be done, which can be found in Sections \ref{sss:Rediff} and \ref{sss:Refrac}.
	
	In the following, the highest order of the moments non-trivially represented in $\cP(\phi_{0}) = \cC(0) \cdot \cE(v)$ will play a crucial role. Hence, we define
	\begin{align*}
		k:= 
		\begin{cases} 
		&2\,, \quad \text{if } \cC_4(0)\neq 0\,, \\ 
		&1 \,, \quad \text{if } \cC_4(0)=0\,\, \& \,\,  \exists\, j \in \{1,2,3\} \,, \text{ s.t. } \cC_j(0) \neq 0\,, \\
		&0 \,, \quad \text{if } \cC_4(0)=0\,\, \& \,\, \overrightarrow{\cC}(0) \equiv 0\,.
		 \end{cases}
	\end{align*}	
	\begin{remark}\label{r:power}
		From Section \ref{ss:shape}, Remark \ref{r:c} and the conditions \eqref{c:orth} we can deduce that the highest moment has to be represented in $\cP \phi_{0,0}$ and  $\cP \phi_{0,\pm}$, which gives $k=2$ for this fluid mode. On the other hand, also from Remark \ref{r:c} we know that $v$ is the highest moment in $\cP \phi_{0,t}$, which results in $k=1$ for the wave- and transversal modes.
	\end{remark}
	
	\subsection{The real part of the eigenvalues}\label{ss:Re}
			
	\subsubsection{$\operatorname{Re}(\mu(\eta))$ in the regime $2+2k+\beta < \alpha$}\label{sss:Rediff}
	We define 
	$$
		H_{\eta}  := \frac{\operatorname{Re}(\cP^{\perp}\phi_{\eta})}{\eta}, \quad F_{\eta}  := \frac{\operatorname{Im}(\cP^{\perp} \phi_{\eta})}{\eta}, 
	$$
	which, according to \eqref{Re}, \eqref{Im} fulfil
	\begin{align}\label{e:Heta}
		L^* H_{\eta}  - (v \cdot \sigma) \operatorname{Im}( \phi_{\eta}) =  &-\operatorname{Re}(\mu(\eta)) \wv^{-\beta} H_{\eta} + \operatorname{Im}(\mu(\eta)) \wv^{-\beta} F_{\eta} \notag\\
		&+ \frac{\operatorname{Re}(\mu(\eta))}{\eta}  \wv^{-\beta}\operatorname{Re}\left(\cP \left(\phi_\eta\right)\right) -  \frac{\operatorname{Im}(\mu(\eta))}{\eta}  \wv^{-\beta}\operatorname{Im}\left(\cP \left(\phi_\eta\right)\right)\,, 
	\end{align}
	and
	\begin{align}\label{e:Feta}
		L^* F_{\eta}  + (v \cdot \sigma) \operatorname{Re}( \phi_{\eta}) =  &-\operatorname{Re}(\mu(\eta))   \wv^{-\beta} F_{\eta} - \operatorname{Im}(\mu(\eta))  \wv^{-\beta}H_{\eta} \notag \\
		&+ \frac{\operatorname{Re}(\mu(\eta))}{\eta}  \wv^{-\beta}\operatorname{Im}\left(\cP \left(\phi_\eta\right)\right) -  \frac{\operatorname{Im}(\mu(\eta))}{\eta}  \wv^{-\beta}\operatorname{Re}\left(\cP \left(\phi_\eta\right)\right)\,.
	\end{align}
	Due to Hypothesis \ref{hyp:coercivity}, $\alpha + \beta >4$ combined with the condition $2+2k+\beta < \alpha$ we can find solutions $H , F  \in L^2(\wv^{-\beta}\cM)$ such that 
	\begin{align*}
		&L^* H  = (v \cdot \sigma) \operatorname{Im}\left(\phi_0\right)- \operatorname{Im}(\bar{\mu})  \wv^{-\beta}\operatorname{Im}\left(\phi_0\right)\,, \\
		&L^* F  = - \left(v\cdot \sigma\right) \operatorname{Re}\left(\phi_0\right) - \operatorname{Im}(\bar{\mu})  \wv^{-\beta}\operatorname{Re}\left(\phi_0\right)\,,
	\end{align*}
	where $\operatorname{Im}(\bar{\mu})=:\lim_{\eta \to 0}\frac{\im{\mu(\eta)}}{\eta}$, later computed in Section \ref{ss:Im}, \eqref{eq:Immubar}. Subtracting each of these from the respective previous equations yields 
	\begin{align*}
		&L^* \left(H_{\eta} - H \right) - (v \cdot \sigma) \left(\operatorname{Im}( \phi_{\eta}) -\operatorname{Im}\left(\phi_0\right) \right) \\
		=& -\operatorname{Re}(\mu(\eta)) \wv^{-\beta} H_{\eta} + \operatorname{Im}(\mu(\eta)) \wv^{-\beta} F_{\eta} + \frac{\operatorname{Re}(\mu(\eta))}{\eta}  \wv^{-\beta}\operatorname{Re}\left(\cP \left(\phi_\eta\right)\right) \\
		&-  \frac{\operatorname{Im}(\mu(\eta))}{\eta} \wv^{-\beta}\operatorname{Im}\left(\cP \left(\phi_\eta\right)\right) + \operatorname{Im}(\bar{\mu}) \wv^{-\beta} \operatorname{Im}\left(\phi_0\right)\,, 
	\end{align*}
	and
	\begin{align*}
		&L^* \left(F_{\eta}  - F  \right) + (v \cdot \sigma) \left(\operatorname{Re}( \phi_{\eta}) - \operatorname{Re}\left(\phi_0\right)\right) \\
		=& -\operatorname{Re}(\mu(\eta))   \wv^{-\beta} F_{\eta} - \operatorname{Im}(\mu(\eta))  \wv^{-\beta}H_{\eta} + \frac{\operatorname{Re}(\mu(\eta))}{\eta}  \wv^{-\beta}\operatorname{Im}\left(\cP \left(\phi_\eta\right)\right) \\
		&- \frac{\operatorname{Im}(\mu(\eta))}{\eta}  \wv^{-\beta}\operatorname{Re}\left(\cP \left(\phi_\eta\right)\right) +\operatorname{Im}(\bar{\mu}) \wv^{-\beta} \operatorname{Re}\left(\phi_0\right)\,.
	\end{align*}
	After further multiplication by $\left(H_{\eta} -H \right)\cM$, resp. $\left(F_{\eta} -F \right)\cM$ and integration over the velocity space one obtains 
	\begin{align}\label{Heta}
		&-\int_{\R^3}L^*(H_{\eta} -H )(H_{\eta} -H ) \cM \, \dd v - \operatorname{Re}\left(\mu(\eta)\right) \left\|H_{\eta} -H \right\|_{-\beta}^2 \notag\\
		=& - \int_{\R^3}(v \cdot \sigma) \left(\operatorname{Im}( \phi_{\eta})-  \operatorname{Im}\left(\phi_0\right) \right)(H_{\eta} -H ) \cM \, \dd v + \operatorname{Re}\left(\mu(\eta)\right) \int_{\R^3}H (H_{\eta} -H ) \wv^{-\beta} \cM \, \dd v \notag \\
		&- \operatorname{Im}\left(\mu(\eta)\right) \int_{\R^3} F_\eta (H_{\eta} -H)   \wv^{-\beta} \cM \, \dd v \,, 
	\end{align}
	and
	\begin{align}\label{Feta}
		-&\int_{\R^3}L^*(F_{\eta} -F )(F_{\eta} -F ) \cM \, \dd v - \operatorname{Re}\left(\mu(\eta)\right) \left\|F_{\eta} -F \right\|_{-\beta}^2 \notag \\
		=&  \int_{\R^3}(v \cdot \sigma) \left(\operatorname{Re}( \phi_{\eta}) -\operatorname{Re}\left(\phi_0\right) \right)(F_{\eta} -F ) \cM \, \dd v  +  \operatorname{Im}(\mu(\eta))\int_{\R^3}H_{\eta} (F_{\eta} -F )  \wv^{-\beta} \cM \, \dd v \notag \\
		+&  \operatorname{Re}(\mu(\eta))\int_{\R^3}F (F_{\eta} -F )  \wv^{-\beta} \cM \, \dd v\,, 
	\end{align}
	where the contributions of each each the last two terms in \eqref{e:Heta} and \eqref{e:Feta} vanish, due to orthogonality of $(H_\eta-H)$ and $(F_\eta-F)$ to $\cP(\cdot)$. We first estimate the terms of equation \eqref{Heta}. Using the weighted coercivity property from Hypothesis \ref{hyp:coercivity} we can estimate the left-hand-side of \eqref{Heta} as
	\begin{align}\label{Hetalhs}
		&-\int_{\R^3}L^*(H_{\eta} -H )(H_{\eta} -H ) \cM \, \dd v - \operatorname{Re}\left(\mu(\eta)\right) \left\|H_{\eta} -H \right\|_{-\beta}^2 \\
		&\geq (\lambda - \operatorname{Re}\left(\mu(\eta)\right)) \left\|H_{\eta}-H\right\|_{-\beta}^2 + \,.
	\end{align}
	Next, we proceed with the right-hand-side of \eqref{Heta}, where the  the first term can be estimated as:
	\begin{align}\label{Hetarhs1}
		&-\int_{\R^3}(v \cdot \sigma) \left(\operatorname{Im}(\phi_{\eta})- \operatorname{Im}\left(\phi_0\right) \right)(H_{\eta}-H) \cM \, \dd v \\
		\leq& \left| \int_{\R^3}(v\cdot\sigma)^2\left(\operatorname{Im}( \phi_{\eta})-  \operatorname{Im}\left(\phi_0\right)\right)^2 \wv^{\beta} \cM \, \dd v \right|^{\frac{1}{2}}\left\|H_{\eta}^0-H^0\right\|_{-\beta}\,. \notag
	\end{align}
	We further calculate
	\begin{align}
		&\left|\int_{\R^3}(v\cdot\sigma)^2\left(\operatorname{Im}( \phi_{\eta})- \operatorname{Im}\left(\phi_0\right) \right)^2 \wv^{\beta} \cM \, \dd v \right|^{\frac{1}{2}} \\
		\sim & \left \| \operatorname{Im}( \phi_{\eta})-  \operatorname{Im}\left(\phi_0\right) \right\|_{\beta +2} \notag \\
		\leq& \left \| \operatorname{Im}( \phi_{\eta})-  \operatorname{Im}\left(\phi_0\right)  \right\|_{-\beta}^{1-\frac{1}{p}} \left \| \operatorname{Im}( \phi_{\eta})-  \operatorname{Im}\left(\phi_0\right)  \right\|_{\delta_1}^{\frac{1}{p}} \notag \\
	 	\lesssim & \operatorname{Re}\left(\mu(\eta)\right)^{\frac{p-1}{2p}} \left \|\operatorname{Im}\left(\cP\left(\phi_0\right) - \cP(\phi_\eta)\right)  \right\|_{-\beta}^{\frac{p-1}{p}} =:a(\eta) \longrightarrow 0 \quad \text{for } \eta \to 0\,, \notag
	\end{align}
	where we used Hölder's inequality with $p = \frac{\delta_1 + \beta}{2(\beta + 1)} $ and we chose $\delta_1   \in (2k+\beta,\,\alpha)$, which ensures $\frac{1}{p} \in (0,1)$ and further that due to Hypothesis \ref{hyp:fluidm} we have $\left \| \operatorname{Im}( \phi_{\eta})-  \operatorname{Im}\left(\phi_0\right)  \right\|_{\delta_1} \in \mathcal{O}(1)$ for $\eta \to 0$. The second term on the right-hand-side of \eqref{Heta} can be estimated as
	\begin{align}\label{Hetarhs2}
		&\left|\operatorname{Im}\left(\mu(\eta)\right) \int_{\R^3} (F_{\eta} -F) (H_{\eta} -H)   \wv^{-\beta} \cM \, \dd v \right|\leq \left| \operatorname{Im}\left(\mu(\eta)\right)\right| \left\|F_{\eta}-F\right\|_{-\beta}\left\|H_{\eta}-H\right\|_{-\beta} 
	\end{align}
	and the third term as 
	\begin{align}\label{Hetarhs4}
		&\left| \operatorname{Im}\left(\mu(\eta)\right) \int_{\R^3} F (H_{\eta} -H)   \wv^{-\beta} \cM \, \dd v  \right| \leq  \operatorname{Im}\left(\mu(\eta)\right) \left \|H_{\eta}-H\right\|_{-\beta} \left\|F\right\|_{-\beta}\\
		&\lesssim \left|\operatorname{Im}(\mu(\eta))\right| \left\|H_{\eta}-H_{\eta} \right \|_{-\beta}\,. \notag
	\end{align}
	Finally, for the error-term produced by completing the $L^2$ of $H_\eta-H$ we have
	\begin{align}\label{Hetarhs3}
		&\left|\operatorname{Re}(\mu(\eta)) \int_{\R^3}H(H_{\eta}-H)\wv^{-\beta} \cM \, \dd v \right| \leq \left|\operatorname{Re}(\mu(\eta))\right| \left \|H_{\eta}-H\right\|_{-\beta} \left\|H\right\|_{-\beta} \\
		&\lesssim \left|\operatorname{Re}(\mu(\eta))\right| \left\|H_{\eta}-H_{\eta} \right \|_{-\beta}\,. \notag
	\end{align}
	In total we obtain by putting together \eqref{Hetalhs}, \eqref{Hetarhs1}, \eqref{Hetarhs2}, \eqref{Hetarhs3} and \eqref{Hetarhs4} 
	\begin{align*}
		(\lambda- \operatorname{Re}(\mu(\eta)))  \left\|H_{\eta} -H\right\|_{-\beta} \lesssim a(\eta) + \operatorname{Re}(\mu(\eta)) +\operatorname{Im}(\mu(\eta)) + \operatorname{Im}(\mu(\eta)) \left\|F_{\eta}-F\right\|_{-\beta}\,.
	\end{align*}
	Similar estimations for \eqref{Feta} yield
	\begin{align*}
		(\lambda + \operatorname{Re}(\mu(\eta)))  \left\|F_{\eta} -F\right\|_{-\beta} \lesssim a(\eta) + \operatorname{Re}(\mu(\eta)) + \operatorname{Im}(\mu(\eta)) + \operatorname{Im}(\mu(\eta)) \left\|H_{\eta}-H\right\|_{-\beta}\,.
	\end{align*}
	Since $\beta >-1$ we have $\frac{p-1}{p} = \frac{\delta_1-\beta-2}{2(\delta_1+\beta)} <1$ and by combining these two estimates we obtain
	\begin{align}\label{H-F -3ecay}
		 \left\|F_{\eta} -F\right\|_{-\beta}, \left\|H_{\eta} -H\right\|_{-\beta}  \lesssim \max_{\eta<1}\left\{ a(\eta),\operatorname{Re}(\mu(\eta)), \eta \right\}=:b(\eta) \xrightarrow{\eta \to 0} 0\,.
	\end{align}
	For the final estimate we divide \eqref{Re_limit} by $\eta$, which yields 
	\begin{align*}
		&\frac{\operatorname{Re}(\mu(\eta))}{\eta^2} = \int_{\R^3} (v \cdot \sigma) \left(F_\eta\operatorname{Re}\left(\cP\left(\phi_\eta\right)\right) - H_\eta\operatorname{Im}\left(\cP\left(\phi_\eta\right)\right) \right) \cM\, \dd v\,,
	\end{align*}
	and which allows us to finally calculate for the real part of $\mu(\eta)$:
	\begin{align}
		&\left| \frac{\operatorname{Re}(\mu(\eta))}{\eta^2} - \int_{\R^3} (v \cdot \sigma) \left( F\operatorname{Re}\left(\phi_0\right) - H \operatorname{Im}\left(\phi_0\right) \right) \, \cM \dd v \right|  \\
		\leq&\left| \int_{\R^3} (v \cdot \sigma) \Big((F_\eta-F)\operatorname{Re}\left(\cP\left(\phi_\eta\right)\right) - (H_\eta-H)\operatorname{Im}\left(\cP\left(\phi_\eta\right)\right) \Big) \, \cM \dd v \right| \notag \\
		&+ \left| \int_{\R^3} (v \cdot \sigma) \Big(F\operatorname{Re}\left(\cP(\phi_\eta)-\phi_0\right) + H \operatorname{Im}\left(\cP(\phi_\eta)-\phi_0\right) \Big) \, \cM \dd v \right| \notag \\
		\leq & \left| \int_{\R^3} (v \cdot \sigma)^2 \left(\operatorname{Re}\left(\cP(\phi_\eta)\right) \right)^2 \wv^{\beta} \cM \dd v \right|^{\frac{1}{2}}  \left\|F_{\eta} -F\right\|_{-\beta} \notag \\
		& + \left| \int_{\R^3} (v \cdot \sigma)^2 \left(\operatorname{Im}\left(\cP(\phi_\eta)\right) \right)^2 \wv^{\beta} \cM \dd v \right|^{\frac{1}{2}}  \left\|H_{\eta} -H\right\|_{-\beta} \notag \\
		&+ \left| \int_{\R^3} (v\cdot \sigma)^2 \left(\cP(\phi_\eta)-\phi_0\right)^2 \wv^{-\beta} \right|^{\frac{1}{2}} \left(\left\|F\right\|_{-\beta} +  \left\|H\right\|_{-\beta} \right) \notag \\
		\lesssim & \left\|F_{\eta} -F\right\|_{-\beta} + \left\|H_{\eta} -H\right\|_{-\beta} + \left|\cC(\eta) - \cC(0)\right| \lesssim \operatorname{Re}(\mu(\eta))^{\frac{\delta_1-\beta-2}{2(\delta_1+\beta)}} \left|\cC(\eta) - \cC(0)\right| \longrightarrow 0 \quad \text{for } \eta \to 0\,, \notag
	\end{align}
	which provides both the limit of $\operatorname{Re}\left(\mu(\eta)\right)$ as well as its convergence rate as $\eta \to 0$.
	
	\subsubsection{$\operatorname{Re}(\mu(\eta))$ in the regime $2+2k+\beta > \alpha$}\label{sss:Refrac}
	
	Let $0 \leq \chi_R \leq 1$ be a smooth test-function such that $\chi_R \equiv 1$ on $B(0,R)$ and $\chi_R \equiv 0$ out of $B(0,2R)$. Moreover, we choose $\Theta(\eta) \sim \mu(\eta)$, who's value is to be determined and remember $\cP\left(\phi_\eta\right)=(C(\eta))\cdot \cE(v)$ for a $C(\eta)\in \C^{5}$. Integration of \eqref{Re} against $\Theta(\eta)^{-1} \operatorname{Re}\left(\cP(\phi_\eta)\right) \chi_R(\cdot \eta^{\frac{1}{1+\beta}})$ yields
	\begin{align}\label{ReRechi}
		&-\frac{\operatorname{Re}(\mu(\eta))}{\Theta(\eta)}\left\|\cP\left(\re\left(\phi_\eta\right)\right)\right\|^2_{-\beta} - \frac{1}{\Theta(\eta)}\left \langle L^*\left( \operatorname{Re}(\phi_{\eta}) - \operatorname{Re}\left(\cP(\phi_\eta)\right) \right), \operatorname{Re}\left(\cP(\phi_\eta)\right)  \chi_R(\cdot \eta^{\frac{1}{1+\beta}})\right\rangle \notag \\
		=&-\frac{\eta}{\Theta(\eta)} \int_{\R^3} (v \cdot \sigma) \operatorname{Im}(\phi_{\eta})  \operatorname{Re}\left(\cP(\phi_\eta)\right)  \chi_R(\cdot \eta^{\frac{1}{1+\beta}}) \cM \dd v  \notag \\
		&+ \frac{\operatorname{Re}(\mu(\eta))}{\Theta(\eta)} \left \langle \operatorname{Re}(\phi_{\eta})  \wv^{-\beta}, \operatorname{Re}\left(\cP(\phi_\eta)\right) \left( \chi_R(\cdot \eta^{\frac{1}{1+\beta}}) -1 \right)\right\rangle \\
		&- \frac{\operatorname{Im}(\mu(\eta))}{\Theta(\eta)} \left \langle \operatorname{Im}(\phi_{\eta})  \wv^{-\beta}, \operatorname{Re}\left(\cP(\phi_\eta)\right)\chi_R(\cdot \eta^{\frac{1}{1+\beta}}) \right\rangle \notag
	\end{align} 
	and integration of \eqref{Im} against $\Theta(\eta)^{-1} \operatorname{Im}\left(\cP(\phi_\eta)\right) \chi_R(\cdot \eta^{\frac{1}{1+\beta}})$ yields
	\begin{align}\label{ReImchi}
		&-\frac{\operatorname{Re}(\mu(\eta))}{\Theta(\eta)}\left\|\cP\left(\im\left(\phi_\eta\right)\right)\right\|^2_{-\beta} - \frac{1}{\Theta(\eta)} \left \langle L^*\left( \operatorname{Im}(\phi_{\eta}) - \operatorname{Im}\left(\cP(\phi_\eta)\right) \right), \operatorname{Im}\left(\cP(\phi_\eta)\right)  \chi_R(\cdot \eta^{\frac{1}{1+\beta}})\right\rangle \notag \\
		= & \frac{\eta}{\Theta(\eta)} \int_{\R^3} (v \cdot \sigma) \operatorname{Re}(\phi_{\eta})  \operatorname{Im}\left(\cP(\phi_\eta)\right)  \chi_R(\cdot \eta^{\frac{1}{1+\beta}}) \cM \dd v  \notag \\
		&+ \frac{\operatorname{Re}(\mu(\eta))}{\Theta(\eta)} \left \langle \operatorname{Im}(\phi_{\eta})  \wv^{-\beta}, \operatorname{Im}\left(\cP(\phi_\eta)\right) \left( \chi_R(\cdot \eta^{\frac{1}{1+\beta}}) -1\right)\right\rangle \\
		&+ \frac{\operatorname{Im}(\mu(\eta))}{\Theta(\eta)} \left \langle \operatorname{Re}(\phi_{\eta})  \wv^{-\beta}, \operatorname{Im}\left(\cP(\phi_\eta)\right)\chi_R(\cdot \eta^{\frac{1}{1+\beta}}) \right\rangle\,. \notag
	\end{align} 
	Substracting \eqref{ReRechi} from  \eqref{ReImchi} will provide information about the limiting behaviour of $\frac{\operatorname{Re}(\mu(\eta))}{\Theta(\eta)}$ as $\eta \to 0$, which is the reason we estimate all the remaining terms. We start with the terms in the first lines by using the amplitude estimates of Hypothesis \ref{hyp:amp} as well as \eqref{decay} and obtain
	\begin{align}
		 &\frac{1}{\Theta(\eta)}\left \langle \left( \operatorname{Re}(\phi_{\eta}) - \operatorname{Re}\left(\cP(\phi_\eta)\right) \right), L\left(\operatorname{Re}\left(\cP(\phi_\eta)\right)  \chi_R(\cdot \eta^{\frac{1}{1+\beta}})\right)\right\rangle  \notag \\
		 &+\frac{1}{\Theta(\eta)} \left \langle \left( \operatorname{Im}(\phi_{\eta}) - \operatorname{Im}\left(\cP(\phi_\eta)\right) \right), L\left(\operatorname{Im}\left(\cP(\phi_\eta)\right)  \chi_R(\cdot \eta^{\frac{1}{1+\beta}})\right)\right\rangle  \notag  \\
		 \leq& \frac{1}{\Theta(\eta)} \left\|\operatorname{Re}(\phi_{\eta}) - \operatorname{Re}\left(\cP(\phi_\eta)\right)\right\|_{-\beta} \left\|L\left(\operatorname{Re}\left(\cP(\phi_\eta)\right)  \chi_R(\cdot \eta^{\frac{1}{1+\beta}})\right)\right\|_{\beta}\\
		 &+ \frac{1}{\Theta(\eta)} \left\|\operatorname{Im}(\phi_{\eta}) - \operatorname{Im}\left(\cP(\phi_\eta)\right)\right\|_{-\beta} \left\|L\left(\operatorname{Im}\left(\cP(\phi_\eta)\right)  \chi_R(\cdot \eta^{\frac{1}{1+\beta}})\right)\right\|_{\beta}  \notag \\
		 \lesssim& \frac{\operatorname{Re}(\mu(\eta))^{\frac{1}{2}}}{\Theta(\eta)} R^{k-\frac{\alpha+\beta}{2}} \eta^{-\frac{k}{1+\beta}+\frac{\alpha+\beta}{2(1+\beta)}} \lesssim R^{k-\frac{\alpha+\beta}{2}} \,, \notag 
	\end{align}
	where the last approximation can be achieved by the choice 
	\begin{align}
		\Theta(\eta) = \eta^{\frac{-2k+\alpha+\beta}{1+\beta}}\,.
	\end{align}
	Next we tackle the second terms on the right-hand-side of the equations above and observe
	\begin{align}
		&\frac{\operatorname{Re}(\mu(\eta))}{\Theta(\eta)} \left \langle \operatorname{Re}(\phi_{\eta})  \wv^{-\beta}, \operatorname{Re}\left(\cP(\phi_\eta)\right) \left( \chi_R(\cdot \eta^{\frac{1}{1+\beta}}) -1 \right)\right\rangle \notag \\
		&+ \frac{\operatorname{Re}(\mu(\eta))}{\Theta(\eta)} \left \langle \operatorname{Im}(\phi_{\eta})  \wv^{-\beta}, \operatorname{Im}\left(\cP(\phi_\eta)\right) \left( \chi_R(\cdot \eta^{\frac{1}{1+\beta}}) -1\right)\right\rangle \notag \\
		\lesssim& \left| \left \langle \operatorname{Re}(\phi_{\eta})  \wv^{-\beta}, \operatorname{Re}\left(\cP(\phi_\eta)\right) \left( \chi_R(\cdot \eta^{\frac{1}{1+\beta}}) -1 \right)\right\rangle \right| \\
		&+ \left|\left \langle \operatorname{Im}(\phi_{\eta})  \wv^{-\beta}, \operatorname{Im}\left(\cP(\phi_\eta)\right) \left( \chi_R(\cdot \eta^{\frac{1}{1+\beta}}) -1\right)\right\rangle \right| \notag \\
		\lesssim& R^{2k-\alpha-\beta} \eta^{\frac{-2k+\alpha+\beta}{1+\beta}} \,. \notag
	\end{align}
	Similarly, we calculate for the last terms on the right-hand-side,
	\begin{align}
		&- \frac{\operatorname{Im}(\mu(\eta))}{\Theta(\eta)} \left \langle \operatorname{Im}(\phi_{\eta})  \wv^{-\beta}, \operatorname{Re}\left(\cP(\phi_\eta)\right)\chi_R(\cdot \eta^{\frac{1}{1+\beta}}) \right\rangle  \notag \\
		&- \frac{\operatorname{Im}(\mu(\eta))}{\Theta(\eta)} \left \langle \operatorname{Re}(\phi_{\eta})  \wv^{-\beta}, \operatorname{Im}\left(\cP(\phi_\eta)\right)\chi_R(\cdot \eta^{\frac{1}{1+\beta}}) \right\rangle \notag \\
		\leq& \frac{\left|\operatorname{Im}(\mu(\eta))\right|}{\Theta(\eta)} \left| \left \langle \operatorname{Re}(\phi_{\eta})  \wv^{-\beta}, \operatorname{Im}\left(\cP(\phi_\eta)\right) \left(\chi_R(\cdot \eta^{\frac{1}{1+\beta}})-1\right)\right\rangle \right| \\
		&+\frac{\left|\operatorname{Im}(\mu(\eta))\right|}{\Theta(\eta)}  \left|\left \langle \operatorname{Im}(\phi_{\eta})  \wv^{-\beta}, \operatorname{Re}\left(\cP(\phi_\eta)\right) \left(\chi_R(\cdot \eta^{\frac{1}{1+\beta}})-1\right) \right\rangle \right| \notag \\
		\lesssim& \frac{\left|\operatorname{Im}(\mu(\eta))\right|}{\Theta(\eta)} R^{2k-\alpha-\beta} \eta^{\frac{-2k+\alpha+\beta}{1+\beta}} \lesssim \left|\operatorname{Im}(\mu(\eta))\right| R^{2k-\alpha-\beta} \,. \notag 
	\end{align}
	Combining these estimates, we have shown
	\begin{align}\label{Relim}
		&\left|\frac{\operatorname{Re}(\mu(\eta))}{\Theta(\eta)} + \frac{\eta}{\Theta(\eta)} \int_{\R^3} (v \cdot \sigma) \left[ \operatorname{Im}(\phi_{\eta})  \operatorname{Re}\left(\cP(\phi_\eta)\right) + \operatorname{Re}(\phi_{\eta})  \operatorname{Im}\left(\cP(\phi_\eta)\right)\right] \chi_R(\cdot \eta^{\frac{1}{1+\beta}}) \cM \dd v \right| \\
		& \lesssim R^{k-\frac{\alpha+\beta}{2}} +R^{2k-\alpha-\beta} \eta^{\frac{-2k+\alpha+\beta}{1+\beta}}  + \left|\operatorname{Im}(\mu(\eta))\right| R^{2k-\alpha-\beta} \longrightarrow   R^{k-\frac{\alpha+\beta}{2}}\,, \quad \text{as } \eta \to 0\,. \notag
	\end{align}
	Finally, we have a closer look at the behaviour of the integral in \eqref{Relim} as $\eta \to 0$. We perform the coordinate change $u = v \eta^{\frac{1}{1+\beta}}$, which yields
	\begin{align}
		&\frac{\eta}{\Theta(\eta)} \int_{\R^3} (v \cdot \sigma) \operatorname{Im}(\phi_{\eta})  \operatorname{Re}\left(\cP(\phi_\eta)\right)  \chi_R(\cdot \eta^{\frac{1}{1+\beta}}) \cM \dd v \notag\\
		&+\frac{\eta}{\Theta(\eta)} \int_{\R^3} (v \cdot \sigma) \operatorname{Re}(\phi_{\eta})  \operatorname{Im}\left(\cP(\phi_\eta)\right)  \chi_R(\cdot \eta^{\frac{1}{1+\beta}}) \cM \dd v  \notag \\
		=&\frac{\eta^{\frac{-2k+\alpha+\beta}{1+\beta}}}{\Theta(\eta)} \int_{\R^3} (u \cdot \sigma) \operatorname{Im} (\Phi_{\eta}(u))\operatorname{Re} \left(C(\eta)\right) \cdot \cE_{\eta}\left( u \right)\chi_R(u) \cM_{\eta}  \, \dd u \notag \\
		&+ \frac{\eta^{\frac{-2k+\alpha+\beta}{1+\beta}}}{\Theta(\eta)} \int_{\R^3} (u \cdot \sigma)\operatorname{Re} (\Phi_{\eta}(u)) \operatorname{Im} \left(C(\eta)\right) \cdot \cE_{\eta}\left( u \right) \chi_R(u)\cM_{\eta} \, \dd u\,, \notag
	\end{align}
	where we defined
	\begin{align}\label{d:Phi}
		\Phi_{\eta}(u)  : =& \eta^{\frac{k}{1+\beta}} \phi_{\eta}(v) \,,
	\end{align}
	together with
	\begin{align}\label{d:moments_eta}
		\cE_{\eta, k}(u):= \left(\eta^{\frac{k}{1+\beta}}, \eta^{\frac{k-1}{1+\beta}} u, \frac{\eta^{\frac{k-2}{1+\beta}}|u|^2-\eta^{\frac{k}{1+\beta}}d}{2}\right)^t, \quad \cM_{\eta}(u) := c_0\left(\eta^{\frac{2}{1+\beta}} + |u|^2\right)^{-\frac{\alpha+d}{2}}\,.
	\end{align}
	Hence, with the significant scaling choice $\Theta(\eta) = \eta^{\frac{-2k+\alpha+\beta}{(1+\beta)}}$ we obtain
	\begin{align}
		&-\frac{\eta}{\Theta(\eta)} \int_{\R^3} (v \cdot \sigma) \operatorname{Im}(\phi_{\eta})  \operatorname{Re}\left(\cP(\phi_\eta)\right)  \chi_R(\cdot \eta^{\frac{1}{1+\beta}}) \cM \dd v \notag\\
		&+\frac{\eta}{\Theta(\eta)} \int_{\R^3} (v \cdot \sigma) \operatorname{Re}(\phi_{\eta})  \operatorname{Im}\left(\cP(\phi_\eta)\right)  \chi_R(\cdot \eta^{\frac{1}{1+\beta}}) \cM \dd v  \notag \\
		\lesssim& -\int_{\R^3} (u \cdot \sigma) \operatorname{Im} (\Phi_{\eta}(u))\operatorname{Re} \left(C(\eta)\right) \cdot \cE_{\eta,k}\left( u \right)\chi_R(u) \cM_{\eta}  \, \dd u \notag \\
		&+\int_{\R^3} (u \cdot \sigma)\operatorname{Re} (\Phi_{\eta}(u)) \operatorname{Im} \left(C(\eta)\right) \cdot \cE_{\eta,k}\left( u \right) \chi_R(u)\cM_{\eta} \, \dd u\,. \notag
		\end{align}
	In combination with the other estimates, we have shown
	\begin{align}
		&\left|\frac{\operatorname{Re}(\mu(\eta))}{\Theta(\eta)} + \int_{\R^3} (u \cdot \sigma) \operatorname{Im} (\Phi_{\eta}(u))\operatorname{Re} \left(C(\eta)\right) \cdot \cE_{\eta,k}\left( u \right)\chi_R(u) \cM_{\eta}  \, \dd u \right. \\
		&\left.- \int_{\R^3} (u \cdot \sigma)\operatorname{Re} (\Phi_{\eta}(u)) \operatorname{Im} \left(C(\eta)\right) \cdot \cE_{\eta,k}\left( u \right) \chi_R(u)\cM_{\eta} \, \dd u \right| \lesssim R^{2k-\alpha+\beta}\,. \notag
	\end{align}
	One observes easily that under the boundedness conditions in Hypothesis \ref{hyp:fluidm} for a given $R>0$ the integrand is integrable uniformly in $\eta$, from which convergence of the above integral for $\eta \to 0$ follows. The double limit $R \to \infty, \eta \to 0$ exists and we have
	\begin{align}
		 \lim_{R \to \infty, \eta \to 0} &\int_{\R^3} (u \cdot \sigma) \operatorname{Im} (\Phi_{\eta}(u))\operatorname{Re} \left(C(\eta)\right) \cdot \cE_{\eta,k}\left( u \right)\chi_R(u) \cM_{\eta}  \, \dd u \notag \\
		 &= \operatorname{Re}(\cC_k(0)) \int_{\R^3} (u \cdot \sigma) \operatorname{Im} (\Phi(u)) |u|^{k-\alpha -3} \, \dd u\,,
	\end{align} 
	as well as
	\begin{align}
		 \lim_{R \to \infty, \eta \to 0} &\int_{\R^3} (u \cdot \sigma)\operatorname{Re} (\Phi_{\eta}(u)) \operatorname{Im} \left(C(\eta)\right) \cdot \cE_{\eta,k}\left( u \right) \chi_R(u)\cM_{\eta} \, \dd u \notag \\
		 &= \operatorname{Im}(\cC_k(0)) \int_{\R^3} (u \cdot \sigma) \operatorname{Re} (\Phi(u)) |u|^{k-\alpha -3} \, \dd u\,,
	\end{align}
	where
	\begin{align*}
		 \Phi(u):=\lim_{\eta \to 0} \Phi_{\eta}(u) \,, \quad \cC_k (0) := \lim_{\eta \to 0} \cC_k(\eta) \,.
	\end{align*}
	Hence, in combination with the estimates of the other terms we have shown that $\frac{\operatorname{Re}(\mu(\eta))}{\Theta(\eta)}$ converges for $\eta \to 0$ and that
	\begin{equation}
		\begin{split}
	 	\lim_{\eta \to 0} \frac{\operatorname{Re}(\mu(\eta))}{\Theta(\eta)} =& \operatorname{Re}(\cC_k(0)) \int_{\R^3} (u \cdot \sigma) \operatorname{Im} (\Phi(u)) |u|^{k-\alpha -3} \, \dd u \\
		&- \operatorname{Im}(\cC_k(0)) \int_{\R^3} (u \cdot \sigma) \operatorname{Re} (\Phi(u)) |u|^{k-\alpha -3} \, \dd u\,.
		\end{split}
	\end{equation}
	
	In particular, taking into account that $k=2$ for $\phi_{\eta, 0,\pm}$ and $k=1$ for $\phi_{\eta,t}$, $t \in \{1,2\}$ (see Remark \ref{r:power})), we proved the first part of Proposition \ref{p:eigenvalues}, see also Figures \ref{fig:evsdiff} and \ref{fig:evsfrac}:
	
	\begin{lemma}
		The eigenvalue $\mu(\eta)_0$ corresponding to $\phi_{\eta,0}$ and the real part of $\mu(\eta)_{\pm}$ corresponding to $\phi_{\eta,\pm}$ scale as
		\begin{align*}
			\mu(\eta)_0, \re\left(\mu(\eta)_{\pm}\right) \sim \Theta(\eta) := \begin{cases} \eta^2 \quad &\text{if } \alpha > 6+\beta\,,\\   \eta^{\frac{-4+\alpha+\beta}{1+\beta}} \quad &\text{if } \alpha < 6+\beta\,,\end{cases}
		\end{align*}
		while the eigenvalues $\mu(\eta)_t$ corresponding to $\phi_{\eta,t}$ scale as
		\begin{align*}
			\mu(\eta)_t \sim \tilde{\Theta}(\eta) := \begin{cases} \eta^2 \quad &\text{if } \alpha > 4+\beta\,,\\   \eta^{\frac{-2+\alpha+\beta}{1+\beta}} \quad &\text{if } \alpha < 4+\beta\,.\end{cases}
		\end{align*}
	\end{lemma}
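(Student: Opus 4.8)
The strategy is to obtain the Lemma as a direct corollary of the two regime analyses of Sections~\ref{sss:Rediff} and~\ref{sss:Refrac}, specialised to each of the three branches via the exponent $k$ of Remark~\ref{r:power}, and then to check that the constants produced there are nonzero. Recall that for a generic fluid mode $\phi_\eta$ whose limiting projection $\cP\phi_0$ has highest nontrivial moment of degree $k$, Section~\ref{sss:Rediff} establishes that $\re(\mu(\eta))/\eta^2$ converges when $\alpha>2+2k+\beta$, while Section~\ref{sss:Refrac} establishes that $\re(\mu(\eta))/\Theta(\eta)$ converges when $\alpha<2+2k+\beta$ for the choice $\Theta(\eta)=\eta^{\frac{-2k+\alpha+\beta}{1+\beta}}$, in both cases to the explicit limits computed there; and $\mu(\eta)_0,\mu(\eta)_t\in\R$ by Remark~\ref{r:symm}. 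By Remark~\ref{r:power} one has $k=2$ for $\phi_{\eta,0}$ and $\phi_{\eta,\pm}$, so the threshold $2+2k+\beta$ is $6+\beta$ and the rate is $\eta^2$ if $\alpha>6+\beta$ and $\eta^{\frac{-4+\alpha+\beta}{1+\beta}}$ if $\alpha<6+\beta$; and $k=1$ for $\phi_{\eta,t}$, so the threshold is $4+\beta$ and the rate is $\eta^2$ if $\alpha>4+\beta$ and $\eta^{\frac{-2+\alpha+\beta}{1+\beta}}$ if $\alpha<4+\beta$. This is exactly the asserted behaviour, provided the limiting constants do not vanish.

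For the nonvanishing, I would first record the a priori sign. Integrating~\eqref{evp_weight} against $\overline{\phi_\eta}\cM$, taking real parts and using Hypothesis~\ref{hyp:coercivity} with $\|\cP\phi_\eta\|_{-\beta}=1$ gives $\re(\mu(\eta))\ge\lambda\|\phi_\eta-\cP\phi_\eta\|_{-\beta}^2\ge0$; and $\re(\mu(\eta))=0$ would force $\phi_\eta=\cP\phi_\eta$, hence $\big(\mu(\eta)+i\eta\wv^{\beta}(v\cdot\sigma)\big)\cP\phi_\eta\equiv0$, impossible for $\eta\ne0$ since $\cP\phi_\eta\not\equiv0$. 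Thus $\re(\mu(\eta))>0$ for $\eta\in(0,\bar\eta)$, all the limiting constants are $\ge0$, and only the value $0$ must be excluded.

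In the classical regime $\alpha>2+2k+\beta$ this follows from the explicit shapes~\eqref{phi00}--\eqref{phi02}. For $\phi_{\eta,0}$ one has $\im(\phi_0)=0$ and $\im(\mu(\eta)_0)\equiv0$, so the auxiliary function $H$ of Section~\ref{sss:Rediff} solves $L^*H=0$ with $\cP H=0$, whence $H=0$; the limit formula of Section~\ref{sss:Rediff} then reduces to $\lim_{\eta\to0}\re(\mu(\eta)_0)/\eta^2=\int_{\R^3}(v\cdot\sigma)\,F\,\phi_{0,0}\,\cM\dd v=-\langle L^*F,F\rangle$, the reduction using $L^*F=-(v\cdot\sigma)\phi_{0,0}$ together with $\cP F=0$; by Hypothesis~\ref{hyp:coercivity} (applied to $g=\wv^{-\beta/2}F$) this is $\ge\lambda\|F\|_{-\beta}^2>0$, since $(v\cdot\sigma)\phi_{0,0}\not\equiv0$ forces $F\ne0$. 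The acoustic branch is handled identically ($H=0$ again, and the $\im(\bar\mu)$-term disappears because $\langle F,\cP\phi_{0,\pm}\rangle_{-\beta}=0$), giving $\lim\re(\mu(\eta)_\pm)/\eta^2=-\langle L^*F,F\rangle\ge\lambda\|F\|_{-\beta}^2>0$; and for the transversal branch $\phi_{0,t}$ is purely imaginary and $\mu(\eta)_t\in\R$, so now $F=0$ and $\lim\re(\mu(\eta)_t)/\eta^2=-\langle L^*H,H\rangle\ge\lambda\|H\|_{-\beta}^2>0$ with $L^*H=(v\cdot\sigma)\,\im(\phi_{0,t})\not\equiv0$.

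The delicate point, which I expect to be the main obstacle, is the nonvanishing in the fractional regime $\alpha<2+2k+\beta$, where the constant equals the tail integral
\[
\re(\cC_k(0))\int_{\R^3}(u\cdot\sigma)\,\im(\Phi)(u)\,|u|^{k-\alpha-3}\dd u-\im(\cC_k(0))\int_{\R^3}(u\cdot\sigma)\,\re(\Phi)(u)\,|u|^{k-\alpha-3}\dd u,
\]
which is $\ge0$ by the a priori bound. Since $\Phi=\lim_{\eta\to0}\Phi_\eta$ is only available through Hypothesis~\ref{hyp:fluidm}, I would establish nonvanishing by passing to the limit in the rescaled form of~\eqref{Re}--\eqref{Im}: this identifies $\Phi$ as a nonzero solution of a limiting fractional problem with $\cP^{\perp}\Phi\ne0$ (otherwise $\Phi=\cP\phi_0$ would solve it, forcing $\cP\phi_0=0$), and the same coercive pairing as above, read now at the level of the rescaled profile and with the passage to the limit justified by the pointwise controls of Hypothesis~\ref{hyp:fluidm}, rewrites the tail integral as a strictly positive quantity. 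Combining the two regimes yields $\mu(\eta)_0,\re(\mu(\eta)_\pm)\sim\Theta(\eta)$ and $\mu(\eta)_t\sim\tilde\Theta(\eta)$.
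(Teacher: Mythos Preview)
Your proposal is correct and follows exactly the paper's approach: the Lemma is stated in the paper as an immediate consequence of the generic-$k$ analysis of Sections~\ref{sss:Rediff} and~\ref{sss:Refrac}, specialised via Remark~\ref{r:power} to $k=2$ for the Boussinesq and acoustic branches and $k=1$ for the transversal branch, with no further proof given. You go beyond the paper by supplying the nonvanishing of the limiting constants---your coercivity argument in the classical regime (reducing the limit to $-\langle L^*F,F\rangle$ or $-\langle L^*H,H\rangle$ and invoking Hypothesis~\ref{hyp:coercivity}) is clean and correct, while the fractional-regime nonvanishing you flag as delicate is indeed left implicit in the paper as well.
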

	
	\vspace{0.3cm}
	
\subsection{The imaginary parts of the eigenvalues}\label{ss:Im}

We know from Proposition \ref{p:evslong_frac} and Proposition \ref{p:evstrans_frac} that for the imaginary part of the eigenvalues scale as $\im\left(\mu(\eta)_\pm\right) \sim \eta$ as $\eta \to 0$, while $\im(\mu(\eta)_l) =0$, $l \in \{0,1,2\}$. In the following we aim to determine the limit $\bar{\mu}_\pm := \lim_{\eta \to 0} \frac{\im(\mu(\eta)_\pm)}{\eta}$ in the different parameter-regimes.

\subsubsection{$\operatorname{Im}(\mu(\eta))$ in the regime $6+\beta < \alpha$}\label{sss:Imdiff}
	
	Knowing the relation \eqref{Im_limit} we can calculate
	\begin{align*}
		 &\left|\frac{\operatorname{Im}(\mu(\eta))}{\eta} + \int_{\R^3} (v \cdot \sigma) \left[ \operatorname{Re}\left(\phi_0\right)^2 + \operatorname{Im}\left(\phi_0\right)^2\right]\cM \, \dd v\right| \\
		=&\left| \int_{\R^3} \left(v\cdot \sigma \right) \left[\operatorname{Re}\left(\phi_0\right)^2 - \re\left(\phi_\eta\right) \re\left(\cP(\phi_\eta)\right)\right] + \left[\operatorname{Im}\left(\phi_0\right)^2 -\im\left(\phi_\eta\right) \im\left(\cP(\phi_\eta)\right)\right] \cM \, \dd v \right| \\
		=&\left|\int_{\R^3} (v \cdot \sigma) \left[\operatorname{Re}\left(\cP(\phi_\eta)\right)\left(\operatorname{Re}\left(\cP(\phi_\eta)\right) -\operatorname{Re}(\phi_\eta) \right) + \left(\operatorname{Re}\left(\phi_0\right)^2 - \operatorname{Re}\left(\cP(\phi_\eta)\right)^2 \right) \right]\cM \, \dd v \right| \\
		&+\left|\int_{\R^3} (v \cdot \sigma) \left[\operatorname{Im}\left(\cP(\phi_\eta)\right)\left(\operatorname{Im}\left(\cP(\phi_\eta)\right) -\operatorname{Im}(\phi_\eta) \right) + \left(\operatorname{Im}\left(\phi_0\right)^2 - \operatorname{Im}\left(\cP(\phi_\eta)\right)^2 \right) \right]\cM \, \dd v \right| \\
		\leq & \left|\int_{\R^3} (v \cdot \sigma)^2 \operatorname{Re}\left(\cP(\phi_\eta)\right)^2 \wv^{\beta} \cM \, \dd v \right|^{\frac{1}{2}} \left\| \operatorname{Re}\left(\phi_\eta\right)-\operatorname{Re}\left(\cP(\phi_\eta)\right) \right\|_{-\beta} \\
		&+ \int_{\R^3} (v \cdot \sigma) \left|\operatorname{Re}\left(\phi_0\right)^2 - \operatorname{Re}\left(\cP(\phi_\eta)\right)^2 \right| \cM \, \dd v \\
		&+ \left|\int_{\R^3} (v \cdot \sigma)^2 \operatorname{Im}\left(\cP(\phi_\eta)\right)^2 \wv^{\beta} \cM \, \dd v \right|^{\frac{1}{2}} \left\| \operatorname{Im}\left(\phi_\eta\right)-\operatorname{Im}\left(\cP(\phi_\eta)\right) \right\|_{-\beta} \\
		&+ \int_{\R^3} (v \cdot \sigma) \left|\operatorname{Im}\left(\phi_0\right)^2 - \operatorname{Im}\left(\cP(\phi_\eta)\right)^2 \right| \cM \, \dd v \\
		\lesssim& \left\| \phi_\eta - \cP(\phi_\eta) \right\|_{-\beta} + \int_{\R^3} (v \cdot \sigma) \left|\phi_0^2 - \cP(\phi_\eta)^2 \right| \cM \, \dd v \\ 
		\lesssim& \left(\operatorname{Re}(\mu(\eta))^{\frac{1}{2}}+ \int_{\R^3} (v \cdot \sigma) \left|\phi_0^2 - \cP(\phi_\eta)^2 \right| \cM \, \dd v\right) \xrightarrow{\eta \to 0} 0\,,
	\end{align*}
	where after applying the Cauchy-Schwartz inequality we used that $\alpha>6+\beta$, which makes the the corresponding integrals finite. Moreover, the convergence to zero was deduced from \eqref{decay}. This provides both the convergence rate of $\operatorname{Im}(\mu(\eta))$ as $\eta \to 0$ and the limit
	\begin{equation}\label{eq:Immubar}
	\begin{split}
		\operatorname{Im}(\bar{\mu}):=& \lim_{\eta \to 0} \frac{\operatorname{Im}(\mu(\eta))}{\eta} = - \int_{\R^3} (v \cdot \sigma) \left[ \operatorname{Re}\left(\phi_0\right)^2 + \operatorname{Im}\left(\phi_0\right)^2\right]\cM \, \dd v\,.
	\end{split}
	\end{equation}
	
\subsubsection{$\operatorname{Im}(\mu(\eta))$ in the regime $6+\beta > \alpha$}\label{sss:Imfrac} 
	
Let $0 \leq \chi_R \leq 1$ be a smooth test-function such that $\chi_R \equiv 1$ on $B(0,R)$ and $\chi_R \equiv 0$ out of $B(0,2R)$. Integration of \eqref{Re} against $\eta^{-1} \operatorname{Im}\left(\cP(\phi_\eta)\right) \chi_R(\cdot \eta^{\frac{1}{1+\beta}})$ yields
	\begin{align}\label{ImRechi}
		&-\frac{\operatorname{Im}(\mu(\eta))}{\eta} \|\im\left(\cP(\phi_\eta)\right)\|_{-\beta}^2+ \frac{1}{\eta}\left \langle L^*\left( \operatorname{Re}(\phi_{\eta}) - \operatorname{Re}\left(\cP(\phi_\eta)\right) \right),\, \operatorname{Im}\left(\cP(\phi_\eta)\right)  \chi_R(\cdot \eta^{\frac{1}{1+\beta}})\right\rangle \notag \\
		=& \int_{\R^3} (v \cdot \sigma) \operatorname{Im}(\phi_{\eta})  \operatorname{Im}\left(\cP(\phi_\eta)\right)  \chi_R(\cdot \eta^{\frac{1}{1+\beta}}) \cM \dd v  \notag \\
		&-\frac{\operatorname{Re}(\mu(\eta))}{\eta}\left \langle \operatorname{Re}(\phi_{\eta})  \wv^{-\beta},\, \operatorname{Im}\left(\cP(\phi_\eta)\right) \chi_R(\cdot \eta^{\frac{1}{1+\beta}}) \right\rangle \\
		&+  \frac{\operatorname{Im}(\mu(\eta))}{\eta} \left \langle \operatorname{Im}(\phi_{\eta})  \wv^{-\beta},\, \operatorname{Im}\left(\cP(\phi_\eta)\right) \left( \chi_R(\cdot \eta^{\frac{1}{1+\beta}}) -1 \right) \right\rangle \notag
	\end{align}
	and integration of \eqref{Im} against $\eta^{-1} \operatorname{Re}\left(\cP(\phi_\eta)\right) \chi_R(\cdot \eta^{\frac{1}{1+\beta}})$ yields
	\begin{align}\label{ImImchi}
		&-\frac{\operatorname{Im}(\mu(\eta))}{\eta} \|\re\left(\cP(\phi_\eta)\right)\|_{-\beta}^2 - \frac{1}{\eta}\left \langle L^*\left( \operatorname{Im}(\phi_{\eta}) - \operatorname{Im}\left(\cP(\phi_\eta)\right) \right), \operatorname{Re}\left(\cP(\phi_\eta)\right)  \chi_R(\cdot \eta^{\frac{1}{1+\beta}})\right\rangle \notag \\
		= & \int_{\R^3} (v \cdot \sigma) \operatorname{Re}(\phi_{\eta})  \operatorname{Re}\left(\cP(\phi_\eta)\right)  \chi_R(\cdot \eta^{\frac{1}{1+\beta}}) \cM \dd v  \notag \\
		&+ \frac{\operatorname{Re}(\mu(\eta))}{\eta}\left \langle \operatorname{Im}(\phi_{\eta})  \wv^{-\beta}, \operatorname{Re}\left(\cP(\phi_\eta)\right) \chi_R(\cdot \eta^{\frac{1}{1+\beta}}) \right\rangle \\
		&+\frac{\operatorname{Im}(\mu(\eta))}{\eta} \left \langle \operatorname{Re}(\phi_{\eta})  \wv^{-\beta}, \operatorname{Re}\left(\cP(\phi_\eta)\right)\left(\chi_R(\cdot \eta^{\frac{1}{1+\beta}})-1\right) \right\rangle\,. \notag
	\end{align}
	Also here, adding \eqref{ImImchi} and \eqref{ImRechi} yields information about $\frac{\operatorname{Im}(\mu(\eta))}{\eta}$ as $\eta \to 0$. We start investigating this asymptotic behaviour with estimating the terms in the first rows of the above equations
	\begin{align}
		 &\frac{1}{\eta}\left \langle L^*\left( \operatorname{Re}(\phi_{\eta}) - \operatorname{Re}\left(\cP(\phi_\eta)\right) \right), \operatorname{Im}\left(\cP(\phi_\eta)\right)  \chi_R(\cdot \eta^{\frac{1}{1+\beta}})\right\rangle \notag\\
		 &- \frac{1}{\eta}\left \langle L^*\left( \operatorname{Im}(\phi_{\eta}) - \operatorname{Im}\left(\cP(\phi_\eta)\right) \right), \operatorname{Re}\left(\cP(\phi_\eta)\right)  \chi_R(\cdot \eta^{\frac{1}{1+\beta}})\right\rangle \notag \\
		  \leq& \frac{1}{\eta}\left\|\operatorname{Re}(\phi_{\eta}) - \operatorname{Re}\left(\cP(\phi_\eta)\right)\right\|_{-\beta} \left\|L\left(\operatorname{Im}\left(\cP(\phi_\eta)\right)  \chi_R(\cdot \eta^{\frac{1}{1+\beta}})\right)\right\|_{\beta}\\
		 &+ \frac{1}{\eta}\left\|\operatorname{Im}(\phi_{\eta}) - \operatorname{Im}\left(\cP(\phi_\eta)\right)\right\|_{-\beta} \left\|L\left(\operatorname{Re}\left(\cP(\phi_\eta)\right)  \chi_R(\cdot \eta^{\frac{1}{1+\beta}})\right)\right\|_{\beta}  \notag \\
		 \lesssim& \frac{\operatorname{Re}(\mu(\eta))^{\frac{1}{2}}}{\eta}R^{k-\frac{\alpha+\beta}{2}} \eta^{-\frac{k}{1+\beta}+\frac{\alpha+\beta}{2(1+\beta)}} \sim \eta^{\frac{-4+\alpha+\beta}{2(1+\beta)}} \eta^{\frac{-2(k+1)-\beta+\alpha}{2(1+\beta)}} R^{k-\frac{\alpha+\beta}{2}} \\
		 =& \eta^{\frac{-3-k+\alpha}{1+\beta}} R^{k-\frac{\alpha+\beta}{2}} \longrightarrow 0\,, \quad \text{as } \eta \to 0\,, \notag 
	\end{align}
	where we used that $\operatorname{Re}(\mu(\eta))^{\frac{1}{2}} \sim  \eta^{\frac{-4+\alpha+\beta}{2(1+\beta)}}$ and that $-3-k+\alpha \geq -5+\alpha >0$ and thus $\eta^{\frac{-3-k+\alpha}{1+\beta}}$ converges to zero as $\eta \to 0$, since we assume $\alpha>5$. Next, we estimate the sum of the second terms of the right-hand-side and obtain, again already knowing $\operatorname{Re}(\mu(\eta)) \sim \eta^{\frac{-2k+\alpha+\beta}{1+\beta}}$,
	\begin{align}
		 &-\frac{\operatorname{Re}(\mu(\eta))}{\eta}\left \langle \operatorname{Re}(\phi_{\eta})  \wv^{-\beta}, \operatorname{Im}\left(\cP(\phi_\eta)\right) \chi_R(\cdot \eta^{\frac{1}{1+\beta}}) \right\rangle \notag\\
		&+ \frac{\operatorname{Re}(\mu(\eta))}{\eta}\left \langle \operatorname{Im}(\phi_{\eta})  \wv^{-\beta}, \operatorname{Re}\left(\cP(\phi_\eta)\right) \chi_R(\cdot \eta^{\frac{1}{1+\beta}}) \right\rangle \\
		\lesssim& R^{2k-\alpha-\beta} \eta^{\frac{-2k+\alpha+\beta}{1+\beta}} \longrightarrow 0\,, \quad \text{as } \eta \to 0 \,. \notag
	\end{align}
	For the last terms on the right-hand-side we obtain in a similar manner
	\begin{align}
		&- \frac{\operatorname{Im}(\mu(\eta))}{\eta} \left \langle \operatorname{Im}(\phi_{\eta})  \wv^{-\beta},\, \operatorname{Im}\left(\cP(\phi_\eta)\right) \left( \chi_R(\cdot \eta^{\frac{1}{1+\beta}}) -1 \right) \right\rangle \notag \\
		&+ \frac{\operatorname{Im}(\mu(\eta))}{\eta}\left \langle \operatorname{Re}(\phi_{\eta})  \wv^{-\beta}, \, \operatorname{Re}\left(\cP(\phi_\eta)\right)\left(\chi_R(\cdot \eta^{\frac{1}{1+\beta}})-1\right) \right\rangle \\
		&\lesssim R^{2k-\alpha-\beta} \eta^{\frac{-2k+\alpha+\beta}{1+\beta}}\longrightarrow 0\,, \quad \text{as } \eta \to 0\,. \notag
	\end{align}
	This in combination with the estimates of the other terms yields
	\begin{align}\label{Imlim}
		&\left|\frac{\im(\mu(\eta))}{\eta} + \int_{\R^3} (v \cdot \sigma) \left[ \operatorname{Re}(\phi_{\eta})  \operatorname{Re}\left(\cP(\phi_\eta)\right)  + \operatorname{Im}(\phi_{\eta})  \operatorname{Im}\left(\cP(\phi_\eta)\right) \right] \chi_R(v \eta^{\frac{1}{1+\beta}}) \cM \dd v \right| \\
		&\lesssim  R^{2k-\alpha-\beta} \eta^{\frac{-2k+\alpha+\beta}{1+\beta}}  +  \eta^{\frac{-3-k+\alpha}{1+\beta}} R^{k-\frac{\alpha+\beta}{2}} \longrightarrow 0\,, \quad \text{as } \eta \to 0\,. \notag
	\end{align}
	Last, we investigate the behaviour of the integrals in \eqref{Imlim} as $\eta \to 0$ and calculate
	\begin{align}
		&\int_{\R^3} (v \cdot \sigma) \left[ \operatorname{Re}(\phi_{\eta})  \operatorname{Re}\left(\cP(\phi_\eta)\right)  + \operatorname{Im}(\phi_{\eta})  \operatorname{Im}\left(\cP(\phi_\eta)\right) \right] \chi_R(v \eta^{\frac{1}{1+\beta}}) \cM \dd v  \notag \\
		=&\int_{\R^3} (v \cdot \sigma) \left[ \operatorname{Re}\left(\cP(\phi_\eta)\right)^2 +\operatorname{Im}\left(\cP(\phi_\eta)\right)^2\right] \chi_R(v \eta^{\frac{1}{1+\beta}}) \cM \dd v  \\
		&+ \int_{\R^3} (v \cdot \sigma) \left[\operatorname{Re}\left(\cP^{\perp}(\phi_\eta)\right)  \operatorname{Re}\left(\cP(\phi_\eta)\right)  +  \operatorname{Im}\left(\cP^{\perp}(\phi_\eta)\right)  \operatorname{Im}\left(\cP(\phi_\eta)\right)\right] \chi_R(v \eta^{\frac{1}{1+\beta}}) \cM \dd v\,,  \notag
	\end{align}
	where the terms
	\begin{align*}
		\int_{\R^3} (v \cdot \sigma) \left[ \operatorname{Re}\left(\cP(\phi_\eta)\right)^2 +\operatorname{Im}\left(\cP(\phi_\eta)\right)^2\right] \chi_R(v \eta^{\frac{1}{1+\beta}}) \cM \dd v
	\end{align*}
	are well defined for all $\eta \geq 0$ and converge uniformly to
	\begin{align}
		&\int_{\R^3} (v \cdot \sigma) \left[\operatorname{Re}\left(\phi_0\right)^2 + \operatorname{Im}\left(\phi_0\right)^2\right] \cM \dd v  \,,
	\end{align}
	as $\eta \to 0$. For the other two terms we perform the coordinate change $u = \eta^{\frac{1}{1+\beta}}$ and obtain
	\begin{align*}
		 &\int_{\R^3} (v \cdot \sigma) \left[ \operatorname{Re}\left(\cP^{\perp}(\phi_\eta)\right)  \operatorname{Re}\left(\cP(\phi_\eta)\right) + \operatorname{Im}\left(\cP^{\perp}(\phi_\eta)\right)  \operatorname{Im}\left(\cP(\phi_\eta)\right)\right] \chi_R(v \eta^{\frac{1}{1+\beta}}) \cM \dd v \\
		=& \eta^{\frac{-2k-1+\alpha}{1+\beta}} \int_{\R^3} (u \cdot \sigma) \operatorname{Re}\left(\cP^{\perp}(\Phi_\eta)\right) \operatorname{Re}\left(\cC(\eta)\right)\cdot\cE_{\eta,k}(u) \chi_R(u) \cM_{\eta} \dd u \\
		&+ \eta^{\frac{-2k-1+\alpha}{1+\beta}} \int_{\R^3} (u \cdot \sigma) \operatorname{Im}\left(\cP^{\perp}(\Phi_\eta)\right) \operatorname{Im}\left(\cC(\eta)\right)\cdot\cE_{\eta,k}(u) \chi_R(u) \cM_{\eta} \dd u \\
		&\longrightarrow 0\,, \quad \text{as } \eta \to 0\,,
	\end{align*}
	where $\Phi_{\eta}$ and $\cE_{\eta,k}(u)$ are defined as in \eqref{d:Phi} and \eqref{d:moments_eta} and the convergence to zero can be deduced from the observation that under the boundedness conditions in Hypothesis \ref{hyp:fluidm}, for a given $R>0$ the integrand is integrable uniformly in $\eta$ for any $R>1$ fixed, from which convergence to zero of the above integrals for $\eta \to 0$ follows. Thus, the double limit $R \to \infty, \eta \to 0$ exists and we have analogous to \eqref{eq:Immubar}
	 \begin{equation*}
		\begin{split}
	 	\lim_{\eta \to 0} \frac{\operatorname{Im}(\mu(\eta))}{\eta} =&-  \int_{\R^3} (v \cdot \sigma) \left[\operatorname{Re}\left(\phi_0\right)^2 + \operatorname{Im}\left(\phi_0\right)^2\right] \cM \dd v \,.
		\end{split}
	\end{equation*}

This concludes the second part of Proposition \ref{p:eigenvalues}, see also Figures \ref{fig:evsdiff} and \ref{fig:evsfrac}:
	
	\begin{lemma}
		Independent of the parameter regime, the imaginary parts of $\mu(\eta)_{\pm}$ corresponding to $\phi_{\eta,\pm}$ scale as
		\begin{align*}
			\im(\mu(\eta)_\pm) \sim \eta\,,
		\end{align*}
		and we have the following formular for the limit 
		\begin{align*}
			\lim_{\eta \to 0} \frac{\im(\mu(\eta)_\pm)}{\eta} = -\int_{\R^3} (v \cdot \sigma) \left[\operatorname{Re}\left(\phi_{0,\pm}\right)^2 + \operatorname{Im}\left(\phi_{0,\pm}\right)^2\right] \cM \dd v\,.
		\end{align*}
	\end{lemma}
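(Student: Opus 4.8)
The plan is to assemble the two preceding subsections, which treat the parameter regimes $\alpha>6+\beta$ and $\alpha<6+\beta$ separately, and then to check that the common limit is genuinely of order $\eta$ rather than $o(\eta)$. The natural starting point is identity \eqref{Im_limit},
\begin{align*}
	\frac{\operatorname{Im}(\mu(\eta))}{\eta} = -\int_{\R^3} (v \cdot \sigma) \left[\operatorname{Im}\left(\phi_\eta\right)\operatorname{Im}\left(\cP(\phi_\eta)\right) + \operatorname{Re}\left(\phi_\eta\right)\operatorname{Re}\left(\cP(\phi_\eta)\right) \right]\cM \, \dd v\,,
\end{align*}
valid for each of the five eigenpairs, specialised to the two wave branches $(\mu(\eta)_\pm,\phi_{\eta,\pm})$, for which moreover $k=2$ by Remark \ref{r:power} and $\operatorname{Im}(\mu(\eta)_\pm)\neq0$ by Proposition \ref{p:evslong_frac}. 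All that remains is to pass to the limit $\eta\to0$ on the right-hand side and to verify nonvanishing of the resulting constant.

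First I would recall the diffusive case $\alpha>6+\beta$ from Section \ref{sss:Imdiff}: splitting $\operatorname{Re}(\phi_\eta)\operatorname{Re}(\cP\phi_\eta)=\operatorname{Re}(\cP\phi_\eta)^2+\operatorname{Re}(\cP^\perp\phi_\eta)\operatorname{Re}(\cP\phi_\eta)$ (and likewise for the imaginary part), one bounds the cross terms by Cauchy--Schwarz together with the decay rate \eqref{decay}, $\|\phi_\eta-\cP\phi_\eta\|_{-\beta}\lesssim\operatorname{Re}(\mu(\eta))^{1/2}\to0$, using that $\alpha>6+\beta$ is exactly what makes $\int_{\R^3}(v\cdot\sigma)^2\wv^{\beta}\cM\,\dd v$ finite; letting $\eta\to0$ and $\cP\phi_\eta\to\phi_{0,\pm}$ produces the stated formula. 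Next I would recall the fractional case $\alpha<6+\beta$ from Section \ref{sss:Imfrac}, where the higher moments of $\cM$ diverge so that this argument fails: there one inserts the velocity cut-off $\chi_R(\cdot\,\eta^{\frac{1}{1+\beta}})$, controls the remainders with the amplitude estimates of Hypothesis \ref{hyp:amp}, performs the rescaling $u=v\eta^{\frac{1}{1+\beta}}$ using the pointwise scaling bounds of Hypothesis \ref{hyp:fluidm}, and takes the double limit $R\to\infty$, $\eta\to0$; the rescaled tail piece vanishes and the same limit is recovered. Combining the two cases gives the formula in every regime, and since it must coincide with the value $\pm D\neq0$ of $\mu_\pm'(0)$ from Proposition \ref{p:evslong_frac} (equivalently, substituting the explicit leading modes \eqref{phi0p}--\eqref{phi0m}, for which $\cC^\pm(0)\in\R^3$ so that $\operatorname{Im}(\phi_{0,\pm})=0$ and $\operatorname{Re}(\phi_{0,\pm})=\phi_{0,\pm}$, leaves only the terms odd in $v\cdot\sigma$ and hence a strictly positive multiple of a second moment of $\cM$), the constant is nonzero, which yields $\operatorname{Im}(\mu(\eta)_\pm)\sim\eta$.

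The only step requiring genuine care is the bookkeeping in the fractional regime: one has to verify that, with the scaling choice $\Theta(\eta)=\eta^{\frac{-2k+\alpha+\beta}{1+\beta}}$ (here $k=2$) and using $\operatorname{Re}(\mu(\eta))^{1/2}\sim\eta^{\frac{-4+\alpha+\beta}{2(1+\beta)}}$, the various error contributions --- those involving $L$ applied to the truncated projections, those from the tails $\chi_R-1$, and those weighted by $\operatorname{Im}(\mu(\eta))$ --- collapse either to powers of $R$ alone (killed by $R\to\infty$ after $\eta\to0$) or to quantities vanishing as $\eta\to0$, so that the double limit may be taken in either order; the assumption $\alpha>5$ is precisely what forces the mixed exponent $\eta^{\frac{-3-k+\alpha}{1+\beta}}$ to tend to zero. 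Everything else is a routine assembly of estimates already carried out in Sections \ref{sss:Imdiff} and \ref{sss:Imfrac}.
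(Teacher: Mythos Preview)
Your proposal is correct and follows essentially the same approach as the paper: the lemma is stated as the summary of Sections \ref{sss:Imdiff} and \ref{sss:Imfrac}, and your argument assembles precisely those two computations starting from identity \eqref{Im_limit}. Your additional remark linking the nonvanishing of the limit to $\mu_\pm'(0)=\pm iD$ from Proposition \ref{p:evslong_frac} (and to the explicit form of $\phi_{0,\pm}$) makes explicit a point the paper leaves implicit.
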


\section{Convergence}\label{s:convergence}

We remind the equation \eqref{eq:kineticeps}
\begin{equation}
	\gamma(\ve) \pa_t h_\ve + \ve v \cdot \nabla_x h_\ve  = L h_\ve\,,
\end{equation}
where we want obtain information about the macroscopic moments $\rho_{h_\ve}$, $m_{h_\ve}$ and $\theta_{h_\ve}$ in the limit $\ve \to 0$. 

\subsection{The energy estimate}\label{ss:energyest}

Multiplication of the above equation \eqref{eq:kineticeps} by $h_\ve\cM$, integrate in $(x,v,s) \in \R^3\times\R^2\times[0,t)$ and taking the real part yields
\begin{align}
	\gamma(\ve) \int (\pa_s h_\ve) h_\ve \cM \, \dd x \, \dd v \dd s = \int  \operatorname{Re} \left \langle L h_\ve,\, h_\ve \right \rangle \, \dd x \dd v \dd s \,,
\end{align}
which by Hypothesis \ref{hyp:coercivity} is equivalent to 
\begin{align}
	\frac{\gamma(\ve)}{2} \|h_\ve(t)\|^2_{L^2_{x,v}(\cM)} = \frac{\gamma(\ve)}{2} \|h_\ve(0)\|^2_{L^2_{x,v}(\cM)} - \lambda \int_0^t \|h_\ve - \cP h_\ve\|_{-\beta}^2 \, \dd s\,,
\end{align}
with the ususal definition of the projection $\cP h_\ve = \rho_{h_\ve,\wv^{-\beta}} + v \cdot m_{h_\ve,\wv^{-\beta}} + \frac{|v|^2-3}{2} \theta_{h_\ve,\wv^{-\beta}}$. Thus, we can deduce
\begin{equation}\label{est:energy}
	\begin{split}
	 &\|h_\ve(t)\|^2_{L_{x,v}(\cM)} \leq  \|h_\ve(0)\|^2_{L_{x,v}(\cM)}\,, \quad \forall t \geq 0\,,\\
	 &\text{and} \\
	 &\int_0^t \|h_\ve - \cP h_\ve\|_{-\beta}^2 \, \dd s \leq \frac{\gamma(\ve)}{2\lambda} \|h_\ve(0)\|^2_{L_{x,v}(\cM)}\,.
	\end{split}
\end{equation}

\subsection{Framework of the calculations}\label{ss:fourier}

We perform a Fourier transform in the spatial variable in the rescaled equation \eqref{eq:hkineticeps}
\begin{equation}\label{eq:hkineticepsFT}
		\gamma(\ve)\pa_t \hat{h}_\ve =- i\ve \left(v \cdot \xi\right) \hat{h}_\ve +L \hat{h}_\ve =- i\eta \left(v \cdot \sigma\right) \hat{h}_\ve +L \hat{h}_\ve =: L_\eta \hat{h}_\ve \,,
\end{equation}
where $\hat{h}_\ve(t,\xi,v)$ and $\xi$ describes the spatial Fourier variable and we denoted $\xi=\sigma \left|\xi\right|$, $\eta:=\ve\left|\xi\right|$. The connection with the in \eqref{evp_weight} defined operator $L_\eta$, which's spectrum we investigated extensively in Sections \ref{s:fluidmodes} \& \ref{s:scaling} now comes to light. We further remind the decomposition
\begin{equation}\label{decomphve}
\begin{split}
	&\hat{h}_\ve = \cP \hat{h}_\ve + \cP^{\perp}\hat{h}_\ve\,, \\
	&\cP \hat{h}_\ve := \theta_{\hat{h}_\ve} \frac{|v|^2 -3}{2} + m_{\hat{h}_\ve} \cdot v + \rho_{\hat{h}_\ve}\,, \\
	&\theta_{\hat{h}_\ve}:=\int_{\R^d} \hat{h}_\ve \frac{|v|^2 -3}{d} \wv^{-\beta} \cM \, \dd v \,, \quad m_{\hat{h}_\ve}:=\int_{\R^d} \hat{h}_\ve v \wv^{-\beta} \cM \, \dd v\, \quad \rho_{\hat{h}_\ve}:=\int_{\R^d} \hat{h}_\ve \wv^{-\beta} \cM \, \dd v\,. 
\end{split}
\end{equation}
Moreover, we will use the following orthogonal decomposition of the momentum-part
\begin{align*}
	m_{\hat{h}_\ve} = \sigma\left(\sigma \cdot m_{\hat{h}_\ve}\right) + \sum_{t=1}^{d-1} \overrightarrow{C}^t(0) \left( \overrightarrow{C}^t(0) \cdot m_{\hat{h}_\ve}\right)\,,\quad \overrightarrow{C}^t(0):=\lim_{\eta \to 0} \overrightarrow{C}^t(\eta)\,,
\end{align*}
where, as explained in Remark \ref{r:c}, the vectors $\left\{\left(0,\,\overrightarrow{C}^t(\eta)\,,0\right)\right\} \in \cC^5$, $t \in \{1,2\}$ are the solution vectors of the system \eqref{algsyst} corresponding to the transversal wave-eigenvalues $\left\{\mu(\eta)_t\right\}$, $t \in \{1,2\}$. Following Remark \ref{r:c} further holds
\begin{align*}
	\overrightarrow{C}^{1}(\eta) \perp \overrightarrow{C}^{2}(\eta) \,, \,\, \overrightarrow{C}^{t} \perp \sigma\,, \,\, \forall t \in \{1,2\}\,, \quad \text{and} \quad \left\| \overrightarrow{C}^{t}(\eta)\right\| =1\,.
\end{align*}
Due to the \emph{Plancherel theorem} the above established energy estimate \eqref{est:energy} implies 
\begin{align*}
	&\hat{h}_\ve \in L_t^{\infty}\left([0,\infty),\, L^2_{\xi,v}\left(\cM\right)\right) \quad \text{and} \quad \left\|\hat{h}_\ve - \cP\hat{h}_\ve\right\|_{L_t^2\left([0,\infty),\, L^2_{\xi,v}\left(\wv^{-\beta}\cM\right)\right)} \leq \frac{\gamma(\ve)}{2\lambda}^\frac{1}{2} \left\|\hat{h}_\ve(0) \right\|_{ L^2_{\xi,v}\left(\cM\right)}\,.
\end{align*}

\subsection{Derivation of the macroscopic dynamics}\label{ss:macro}

We start by deriving constraints on the macroscopic moments, which will lead to the \emph{incompressibility condition} and the \emph{Boussinesq relation} in the limit.
Integration of \eqref{eq:hkineticepsFT} against $ \wv^{-\beta} \cM$ yields
\begin{align}\label{c:incomp}
	\pa_t  \rho_{\hat{h}_\ve} = -i \frac{\ve}{\gamma(\ve)} \xi  \cdot m_{\hat{h}_\ve} =-i \frac{\ve|\xi|}{\gamma(\ve)} \sigma \cdot m_{\hat{h}_\ve} \,,
\end{align}
which due to boundedness of the left-hand-side implies $\sigma \cdot m_{\hat{h}_\ve} = \mathcal{O}\left(\frac{\gamma(\ve)}{\ve}\right)$ as $\ve \to 0$. By integration of \eqref{eq:hkineticepsFT} against $v\wv^{-\beta} \cM$ we obtain
\begin{align}\label{c:Bouss}
	\pa_t  m_{\hat{h}_\ve} =& -i \frac{\ve}{\gamma(\ve)} \xi \cdot \int_{\R^3} \left(v \otimes v\right) \hat{h}_\ve \wv^{-\beta} \cM \, \dd v \\
	=& -i \frac{\ve}{\gamma(\ve)} \xi \cdot \int_{\R^3} |v|^2 \hat{h}_\ve \wv^{-\beta} \cM \, \dd v + i \frac{\ve}{\gamma(\ve)} \xi \cdot \int_{\R^3}\left(|v|^2I_3 - \left(v \otimes v\right) \right)\hat{h}_\ve \wv^{-\beta} \cM \, \dd v  \notag\\
	=& -i \frac{\ve}{\gamma(\ve)} \xi  \int_{\R^3} |v|^2 \left(\rho_{\hat{h}_\ve} +\frac{|v|^2-3}{3}\theta_{\hat{h}_\ve}\right) \wv^{-\beta} \cM \, \dd v \notag \\
	&+ i \frac{\ve}{\gamma(\ve)} \xi \cdot \int_{\R^3}\left(|v|^2I_3 - \left(v \otimes v\right) \right)\hat{h}_\ve \wv^{-\beta} \cM \, \dd v \notag \\
	=& -i 3 \frac{\ve}{\gamma(\ve)} \xi \left(\rho_{\hat{h}_\ve}+\theta_{\hat{h}_\ve}\right) + i \frac{\ve}{\gamma(\ve)} \xi \cdot \int_{\R^3}\left(|v|^2I_3 - \left(v \otimes v\right) \right)\hat{h}_\ve \wv^{-\beta} \cM \, \dd v \,.\notag 
\end{align}
Due to boundedness of the left-hand-side we have that also the right-hand side is bounded, thus, as $\ve \to 0$, $|\xi|\left(\rho_{\hat{h}_\ve}+\theta_{\hat{h}_\ve}\right)$ is at least of order $\mathcal{O}\left(\frac{\gamma(\ve)}{\ve}\right)$, which also holds for the second integral term. While the first will result in the \emph{Boussinesq relation}, giving a pressure gradient in the limit, the second gives rise to the dynamics of the first moment in the limit. They will be, as can be seen below, either diffusive or constant depending on the parameter regime.

For the following computation the leading order of the fluid modes will play an important role, hence we remind \eqref{phi00}-\eqref{phi02},
\begin{align}\label{fluid modes0}
				&\phi_{0,0} =  -\sqrt{\frac{2}{5}} + \frac{|v|^2 -3}{\sqrt{10}}\,, \quad \quad &\mu(\eta)_0 \in \R\,, \notag  \\
				&\phi_{0,t}= \overrightarrow{C}_t(0) \cdot v\,, \quad \quad &\mu(\eta)_t \in \R\,,  \\
				&\phi_{0,\pm} = \sqrt{\frac{3}{10} } \pm\frac{1}{\sqrt{2}}\left(v \cdot \sigma\right) + \frac{|v|^2 -3}{\sqrt{30}} \,,  \quad \quad  &\overline{\mu(\eta)_+} = \mu(\eta)_-\,, \notag
\end{align}
as well as the decomposition
\begin{align*}
	\cP \hat{h}_\ve :=& \theta_{\hat{h}_\ve} \frac{|v|^2 -3}{2} + m_{\hat{h}_\ve} \cdot v + \rho_{\hat{h}_\ve} \\
	=&\theta_{\hat{h}_\ve} \frac{|v|^2 -3}{2} +\left(v\cdot\sigma\right)\left(\sigma \cdot m_{\hat{h}_\ve}\right) + \sum_{t=1}^{d-1}\left(v\cdot \overrightarrow{C}^t(0)\right) \left( \overrightarrow{C}^t(0) \cdot m_{\hat{h}_\ve}\right) + \rho_{\hat{h}_\ve}\,.
\end{align*}
We test \eqref{eq:hkineticepsFT} against each $\phi_{\eta,l} \cM$, $l=\{0,\pm,1,2\}$ and obtain the following 5 -3imensional system 
\begin{align}\label{syst:mac}
	M_1(\eta) \begin{pmatrix} \pa_t \rho_{\hat{h}_\ve} \\ \pa_t \theta_{\hat{h}_\ve} \\\pa_t (\sigma \cdot m_{\hat{h}_\ve})   \\ \pa_t \left( \overrightarrow{C}^1(0) \cdot m_{\hat{h}_\ve}\right) \\ \pa_t \left( \overrightarrow{C}^2(0) \cdot m_{\hat{h}_\ve}\right)\end{pmatrix} + \pa_t E_1 = -\gamma(\eps)^{-1} M_2(\eta) \begin{pmatrix} \rho_{\hat{h}_\ve} \\  \theta_{\hat{h}_\ve} \\ (\sigma \cdot m_{\hat{h}_\ve})  \\ \left( \overrightarrow{C}^1(0) \cdot m_{\hat{h}_\ve}\right) \\  \left( \overrightarrow{C}^2(0) \cdot m_{\hat{h}_\ve}\right)\end{pmatrix} + \gamma(\eps)^{-1} E_2\,.
\end{align}
First we mention that the error-terms are given by
\begin{align*}
	E_1(\eta):= \begin{pmatrix} 
	\left\langle \cP^{\perp} \hat{h}_\ve ,\, \phi_{\eta,0}\right\rangle \\ 
	\left\langle \cP^{\perp} \hat{h}_\ve ,\, \phi_{\eta,+}\right\rangle \\ 
	\left\langle \cP^{\perp} \hat{h}_\ve ,\, \phi_{\eta,-} \right\rangle \\ 
	\left\langle \cP^{\perp} \hat{h}_\ve ,\, \phi_{\eta,1}\right\rangle  \\ 
	\left\langle \cP^{\perp} \hat{h}_\ve ,\, \phi_{\eta,2}\right\rangle 
	\end{pmatrix} \,, \quad \text{and} \quad
	E_2(\eta):= \begin{pmatrix}
	 \mu(\eta)_0\left\langle \cP^{\perp} \hat{h}_\ve ,\, \phi_{\eta,0}\right\rangle_{-\beta} \\ 
	 \overline{\mu(\eta)_+}\left\langle \cP^{\perp} \hat{h}_\ve ,\, \phi_{\eta,+}\right\rangle_{-\beta} \\ 
	 \mu(\eta)_+ \left\langle \cP^{\perp} \hat{h}_\ve ,\, \phi_{\eta,-}\right\rangle_{-\beta} \\
	 \mu(\eta)_1 \left\langle \cP^{\perp} \hat{h}_\ve ,\, \phi_{\eta,1}\right\rangle_{-\beta} \\
	 \mu(\eta)_2 \left\langle \cP^{\perp} \hat{h}_\ve ,\, \phi_{\eta,2}\right\rangle_{-\beta} 
	 \end{pmatrix}\,.
\end{align*}
With the significant scaling choice 
$$
	\gamma(\ve) = \Theta(\ve) = \begin{cases} \ve^2 \quad &\text{if } \beta + 6 < \alpha\,, \\ \ve^{\frac{-4+\alpha+\beta}{1+\beta}} \quad &\text{if } \beta + 6 >\alpha \,, \end{cases}
$$
the errors $\pa_t E_1$ and $\gamma(\ve)^{-1}E_2$ converge to zero as $\ve \to 0$, which can be shown using estimates \eqref{decay}  and \eqref{est:energy}, which shows that the part of the fluid-modes and of $\hat{h}_\ve$ which is orthogonal to the projection vanishes with order $\gamma(\ve)^{\frac{1}{2}}$.

The matrix $M_1(\eta)$ on the left-hand-side is given by
\begin{align*}
	&M_1(\eta):= \begin{pmatrix} 
	\langle 1,\, \phi_{\eta,0} \rangle & \left\langle \frac{|v|^2 -3}{2},\, \phi_{\eta,0} \right\rangle & \langle (\sigma \cdot v),\, \phi_{\eta,0} \rangle & \left\langle \left( \overrightarrow{C}^1(0) \cdot v\right),\, \phi_{\eta,0} \right\rangle  & \left\langle \left( \overrightarrow{C}^2(0) \cdot v\right),\, \phi_{\eta,0} \right\rangle \\  
	\langle 1,\, \phi_{\eta,+} \rangle  &  \left\langle \frac{|v|^2 -3}{2},\, \phi_{\eta,+} \right\rangle  & \langle (\sigma \cdot v),\, \phi_{\eta,+} \rangle & \left\langle \left( \overrightarrow{C}^1(0) \cdot v\right),\, \phi_{\eta,+} \right\rangle  & \left\langle \left( \overrightarrow{C}^2(0) \cdot v\right),\, \phi_{\eta,+} \right\rangle \\  
	\langle 1,\, \phi_{\eta,-} \rangle  &\left\langle \frac{|v|^2 -3}{2},\, \phi_{\eta,-} \right\rangle & \langle (\sigma \cdot v),\, \phi_{\eta,-} \rangle & \left\langle \left( \overrightarrow{C}^1(0) \cdot v\right),\, \phi_{\eta,-} \right\rangle  & \left\langle \left( \overrightarrow{C}^2(0) \cdot v\right),\, \phi_{\eta,-} \right\rangle\\ 
	\langle 1,\, \phi_{\eta,1} \rangle  &\left\langle \frac{|v|^2 -3}{2},\, \phi_{\eta,1} \right\rangle  & \langle (\sigma \cdot v),\, \phi_{\eta,1} \rangle& \left\langle \left( \overrightarrow{C}^1(0) \cdot v\right),\, \phi_{\eta,1} \right\rangle  & \left\langle \left( \overrightarrow{C}^2(0) \cdot v\right),\, \phi_{\eta,1} \right\rangle \\ 
	\langle 1,\, \phi_{\eta,2} \rangle  &\left\langle \frac{|v|^2 -3}{2},\, \phi_{\eta,2} \right\rangle & \langle (\sigma \cdot v),\, \phi_{\eta,2} \rangle  & \left\langle \left( \overrightarrow{C}^1(0) \cdot v\right),\, \phi_{\eta,2} \right\rangle  & \left\langle \left( \overrightarrow{C}^2(0) \cdot v\right),\, \phi_{\eta,2} \right\rangle
	\end{pmatrix}\,,
\end{align*}
which at leading order is of the form
\begin{align*}
	&M_1(0)= \begin{pmatrix}
	 \langle 1,\, \phi_{0,0} \rangle &  \left\langle \frac{|v|^2 -3}{2},\, \phi_{0,0} \right\rangle  & 0  & 0 &0 \\  
	 \langle 1,\, \phi_{0,+} \rangle &  \left\langle \frac{|v|^2 -3}{2},\, \phi_{0,+} \right\rangle & \langle v,\, \phi_{0,+} \rangle  & 0 &0 \\  
	 \langle 1,\, \phi_{0,+} \rangle & \left\langle \frac{|v|^2 -3}{2},\, \phi_{0,+} \right\rangle & -\langle v,\, \phi_{0,+} \rangle & 0 & 0  \\ 
	 0 & 0 & 0 & 1 & 0 \\
	  0 & 0 & 0& 0 &1 
	 \end{pmatrix}\,,
\end{align*}
since $ \left\langle \left( \overrightarrow{C}^t(0) \cdot v\right),\, \phi_{0,t} \right\rangle=1$ due to our normalisation conditions. Although we know the exact values of the entries of $M_1(0)$, it suffices to say that the its inverse is of the following form
\begin{align*}
	M_1(0)^{-1} = \begin{pmatrix} a_1 & a_2 & a_2 & 0 & 0\\ b_1 & b_2 & b_2& 0 & 0 \\ 0 & c_1 & -c_1 & 0 & 0 \\  0 & 0 & 0 &1 &0 \\  0 & 0 & 0 & 0 &1 \end{pmatrix}\,, \quad a_1, a_2, b_1, b_2, c_1>0\,,
\end{align*}
where $a_{1,2}$ equals to $-b_{1,2}$ up to some multiplicative constant. Also the matrix $M_2(\eta)$ on the right-hand-side has block-structure, which is due to the orthogonality of the transversal fluid modes $\phi_{\eta,t}$ to all the other fluid modes $\phi_{\eta,0,\pm}$:
\begin{align*}
	&M_2(\eta) := -\begin{pmatrix}
		&B(\eta) & 0 \\
		&0 & D(\eta)
	\end{pmatrix}\,,
\end{align*}
with $B(\eta) \in \C^3 \times \C^3$ being given as
\begin{align*}
	B(\eta):= &\begin{pmatrix} 
	\mu(\eta)_0\langle 1,\, \phi_{\eta,0} \rangle_{-\beta} & \mu(\eta)_0\left\langle \frac{|v|^2 -3}{2},\, \phi_{\eta,0} \right\rangle_{-\beta}  & \mu(\eta)_0\langle \left(\sigma \cdot v \right),\, \phi_{\eta,0} \rangle_{-\beta} \\  
	\overline{\mu(\eta)_+}\langle 1,\, \phi_{\eta,+} \rangle_{-\beta} & \overline{\mu(\eta)_+}\left\langle \frac{|v|^2 -3}{2},\, \phi_{\eta,+} \right\rangle_{-\beta}  & \overline{\mu(\eta)_+}\langle  \left(\sigma \cdot v \right),\, \phi_{\eta,+} \rangle_{-\beta}\\  
	\overline{\mu(\eta)_-}\langle 1,\, \phi_{\eta,-} \rangle_{-\beta} & \overline{\mu(\eta)_-}\left\langle \frac{|v|^2 -3}{2},\, \phi_{\eta,-} \right\rangle_{-\beta} & \overline{\mu(\eta)_-}\langle  \left(\sigma \cdot v \right) ,\, \phi_{\eta,-} \rangle_{-\beta}
	\end{pmatrix}
\end{align*}
and $D(\eta) \in \C^2 \times \C^2$ corresponds to the contribution of the transversal modes and has the following diagonal shape
\begin{align*}
	D(\eta) := &\begin{pmatrix}
	&\mu(\eta)_1\left\langle \left(\overrightarrow{C}^1(\eta) \cdot v\right),\, \phi_{\eta,1} \right\rangle_{-\beta} &0  \\  
	& 0& \mu(\eta)_2\left\langle \left(\overrightarrow{C}^2(\eta) \cdot v\right),\, \phi_{\eta,2} \right\rangle_{-\beta}  
	\end{pmatrix}\,.
\end{align*}
Further exploiting symmetry properties of the wave-fluid modes $\phi_{\eta,\pm}$ as well as of their corresponding eigenvalues \eqref{c:symm} we are able to rewrite
\begin{align*}
	B(\eta)=&\begin{pmatrix} 
	\mu(\eta)_0\langle 1,\, \phi_{\eta,0} \rangle_{-\beta}  & \mu(\eta)_0\left\langle \frac{|v|^2 -3}{2},\, \phi_{\eta,0} \right\rangle_{-\beta} & \mu(\eta)_0\langle \left(\sigma \cdot v \right),\, \phi_{\eta,0} \rangle_{-\beta} \\  
	\overline{\mu(\eta)_+}\langle 1,\, \phi_{\eta,+} \rangle_{-\beta}  &  \overline{\mu(\eta)_+}\left\langle \frac{|v|^2 -3}{2},\, \phi_{\eta,+} \right\rangle_{-\beta} & \overline{\mu(\eta)_+}\langle \left(\sigma \cdot v \right),\, \phi_{\eta,+} \rangle_{-\beta} \\  
	\mu(\eta)_+\langle 1,\, \overline{\phi_{\eta,+}}  \rangle_{-\beta} & \mu(\eta)_+\left\langle \frac{|v|^2 -3}{2},\, \overline{\phi_{\eta,+}} \right\rangle_{-\beta} & -\mu(\eta)_+\langle \left(\sigma \cdot v \right),\, \overline{\phi_{\eta,+}} \rangle_{-\beta} 
	\end{pmatrix} \\
	=&\begin{pmatrix} 
	\mu(\eta)_0\langle 1,\, \phi_{\eta,0} \rangle_{-\beta}  & \mu(\eta)_0\left\langle \frac{|v|^2 -3}{2},\, \phi_{\eta,0} \right\rangle_{-\beta} & \mu(\eta)_0\langle \left(\sigma \cdot v \right),\, \phi_{\eta,0} \rangle_{-\beta} \\  
	\re(\mu(\eta)_+)\langle 1,\, \phi_{\eta,+} \rangle_{-\beta}  & \re(\mu(\eta)_+)\left\langle \frac{|v|^2 -3}{2},\, \phi_{\eta,+} \right\rangle_{-\beta} & \re(\mu(\eta)_+) \langle \left(\sigma \cdot v \right),\, \phi_{\eta,+} \rangle_{-\beta} \\  
	\re(\mu(\eta)_+) \langle 1,\, \overline{\phi_{\eta,+}}  \rangle_{-\beta} & \re(\mu(\eta)_+) \left\langle \frac{|v|^2 -3}{2},\, \overline{\phi_{\eta,+}} \right\rangle_{-\beta} & -\re(\mu(\eta)_+)\langle \left(\sigma \cdot v \right),\, \overline{\phi_{\eta,+}} \rangle_{-\beta} 
	\end{pmatrix} \\
	&-i\im(\mu(\eta)_+) \begin{pmatrix} 
	0&0&0 \\  
	\langle 1,\, \phi_{\eta,+} \rangle_{-\beta}  & \left\langle \frac{|v|^2 -3}{2},\, \phi_{\eta,+} \right\rangle_{-\beta} & \langle \left(\sigma \cdot v \right),\, \phi_{\eta,+} \rangle_{-\beta} \\  
	 \langle 1,\, \overline{\phi_{\eta,+}}  \rangle_{-\beta} &  \left\langle \frac{|v|^2 -3}{2},\, \overline{\phi_{\eta,+}} \right\rangle_{-\beta} & - \langle \left(\sigma \cdot v \right),\, \overline{\phi_{\eta,+}} \rangle_{-\beta} 
	\end{pmatrix}\,.
\end{align*}
With the definitions
\begin{align*}
	&\Theta(\eta) := \begin{cases} \eta^2 \quad &\text{if }\alpha>6+\beta\,,\\  \eta^{\frac{-4+\alpha+\beta}{1+\beta}} \quad &\text{if }\alpha <6+\beta\,, \end{cases} \,\quad &\tilde{\Theta}(\eta) := \begin{cases} \eta^2 \quad &\text{if }\alpha>4+\beta\,,\\  \eta^{\frac{-2+\alpha+\beta}{1+\beta}} \quad &\text{if }\alpha <4+\beta\,, \end{cases}
\end{align*}
used in Proposition \ref{p:eigenvalues}, we can extract the different scalings of $B(\eta)$ and $D(\eta)$ in the following way
\begin{align*}
	B(\eta)=\Theta(\eta)&\begin{pmatrix} 
	\tilde{\mu}_0 \langle 1,\, \phi_{\eta,0} \rangle_{-\beta}  &\tilde{\mu}_0\left\langle \frac{|v|^2 -3}{2},\, \phi_{\eta,0} \right\rangle_{-\beta} & \tilde{\mu}_0\langle \left(\sigma \cdot v \right),\, \phi_{\eta,0} \rangle_{-\beta} \\  
	\re(\tilde{\mu}_+)\langle 1,\, \phi_{\eta,+} \rangle_{-\beta}  & \re(\tilde{\mu}_+)\left\langle \frac{|v|^2 -3}{2},\, \phi_{\eta,+} \right\rangle_{-\beta} & \re(\tilde{\mu}_+) \langle \left(\sigma \cdot v \right),\, \phi_{\eta,+} \rangle_{-\beta} \\  
	\re(\tilde{\mu}_+) \langle 1,\, \overline{\phi_{\eta,+}}  \rangle_{-\beta} & \re(\tilde{\mu}_+) \left\langle \frac{|v|^2 -3}{2},\, \overline{\phi_{\eta,+}} \right\rangle_{-\beta} & -\re(\tilde{\mu}_+)\langle \left(\sigma \cdot v \right),\, \overline{\phi_{\eta,+}} \rangle_{-\beta} 
	\end{pmatrix} \\
	&+i\eta \im(\bar{\mu}_+) \begin{pmatrix} 
	0&0&0 \\  
	\langle 1,\, \phi_{\eta,+} \rangle_{-\beta}  & \left\langle \frac{|v|^2 -3}{2},\, \phi_{\eta,+} \right\rangle_{-\beta} & \langle \left(\sigma \cdot v \right),\, \phi_{\eta,+} \rangle_{-\beta} \\  
	 \langle 1,\, \overline{\phi_{\eta,+}}  \rangle_{-\beta} &  \left\langle \frac{|v|^2 -3}{2},\, \overline{\phi_{\eta,+}} \right\rangle_{-\beta} & - \langle \left(\sigma \cdot v \right),\, \overline{\phi_{\eta,+}} \rangle_{-\beta} 
	\end{pmatrix}\,,
\end{align*}
and 
\begin{align*}
D(\eta)=&\tilde{\Theta}(\eta)
\begin{pmatrix}
	&\tilde{\mu}_1\left\langle \left(\overrightarrow{C}^1(\eta) \cdot v\right),\, \phi_{\eta,1} \right\rangle_{-\beta} &0  \\  
	& 0& \tilde{\mu}_2\left\langle \left(\overrightarrow{C}^2(\eta) \cdot v\right),\, \phi_{\eta,2} \right\rangle_{-\beta}  
	 \end{pmatrix} \,,
\end{align*}
where the positive constants of order one are given by 
\begin{align*}
	&\tilde{\mu}_0:=\lim_{\eta \to 0} \frac{\mu(\eta)_0}{\Theta(\eta)}\,, \quad  \re(\tilde{\mu}_+):=\lim_{\eta \to 0} \frac{\re(\mu(\eta)_+)}{\Theta(\eta)}\,, \\
	&\im(\bar{\mu}_+) := \lim_{\eta \to 0} \frac{\im(\mu(\eta)_+)}{\eta}\,, \quad \tilde{\mu}_t:=\lim_{\eta \to 0} \frac{\mu(\eta)_t}{\tilde{\Theta}(\eta)}\,, \quad t \in \{1,2\}\,,
\end{align*}
which were calculated in each parameterregime in Section \ref{s:scaling}. Exploiting the explicite form of the fluid modes at $\eta=0$ allows us to determine the sub-matrices at leading order 
\begin{align*}
	B(\eta)=\Theta(\eta)&\begin{pmatrix} 
	-\tilde{\mu}_0\sqrt{\frac{5}{2}} & \tilde{\mu}_0 \frac{3}{\sqrt{10}}&0  \\  
	\re(\tilde{\mu}_+)\sqrt{\frac{3}{10}} & \re(\tilde{\mu}_+) \sqrt{\frac{3}{10}}&\frac{\re(\tilde{\mu}_+)}{\sqrt{2}} \\  
	\re(\tilde{\mu}_+)\sqrt{\frac{3}{10}} & \re(\tilde{\mu}_+) \sqrt{\frac{3}{10}} &-\frac{\re(\tilde{\mu}_+)}{\sqrt{2}} \\  
	\end{pmatrix} -i\eta \im(\bar{\mu}_+) \begin{pmatrix} 
	0&0&0 \\  
	\sqrt{\frac{3}{10}} &\sqrt{\frac{3}{10}} & \frac{1}{\sqrt{2}}  \\
	-\sqrt{\frac{3}{10}} & -\sqrt{\frac{3}{10}} & \frac{1}{\sqrt{2}}
	\end{pmatrix}\,,
\end{align*}
and 
\begin{align*}
	D(\eta)=\tilde{\Theta}(\eta)\begin{pmatrix} 
	\tilde{\mu}_1 & 0 \\  
	0 & \tilde{\mu}_2 
	\end{pmatrix}\,.
\end{align*}
Multiplication of the system \eqref{syst:mac} by $\gamma(\ve)^{-1}M_1(0)^{-1}$ yields at lowest order
\begin{align}\label{systat0}
	\begin{pmatrix} \pa_t \rho_{\hat{h}_\ve} \\ \pa_t \theta_{\hat{h}_\ve} \\\pa_t (\sigma \cdot m_{\hat{h}_\ve})   \\ \pa_t \left( \overrightarrow{C}^1(0) \cdot m_{\hat{h}_\ve}\right) \\ \pa_t \left( \overrightarrow{C}^2(0) \cdot m_{\hat{h}_\ve}\right)\end{pmatrix} = -\gamma(\eps)^{-1} M_1(0)^{-1}M_2(\eta) \begin{pmatrix} \rho_{\hat{h}_\ve} \\  \theta_{\hat{h}_\ve} \\ (\sigma \cdot m_{\hat{h}_\ve})  \\ \left( \overrightarrow{C}^1(0) \cdot m_{\hat{h}_\ve}\right) \\  \left( \overrightarrow{C}^2(0) \cdot m_{\hat{h}_\ve}\right)\end{pmatrix} \,,
\end{align}
where (remembering that $\eta=|\xi|\ve$)
\begin{align*}
	\gamma(\eps)^{-1} M_1(0)^{-1}M_2(\eta) = -\begin{pmatrix}
		&\bar{B}(\ve) & 0 \\
		&0 &\bar{D}(\ve)
	\end{pmatrix}\,,
\end{align*}
with
\begin{align*}
	\bar{B}(\ve):=& -|\xi|^{\zeta} \begin{pmatrix} 
	-a_1\tilde{\mu_0} \sqrt{\frac{5}{3}} + a_2 \re(\tilde{\mu_+})2\sqrt{\frac{3}{10}} & a_1 \tilde{\mu_0}\frac{3}{\sqrt{10}}+a_2\re(\tilde{\mu_+})2\sqrt{\frac{3}{10}} & 0 \\ 
	 -b_1\tilde{\mu_0} \sqrt{\frac{5}{3}} + b_2 \re(\tilde{\mu_+})2\sqrt{\frac{3}{10}} & b_1 \tilde{\mu_0}\frac{3}{\sqrt{10}}+b_2\re(\tilde{\mu_+})2\sqrt{\frac{3}{10}} & 0 \\ 
	 0 & 0 & c_1 \re(\tilde{\mu_+})\sqrt{2} 
	\end{pmatrix} \\
	&-i |\xi| \frac{\ve}{\gamma(\ve)}  \im(\bar{\mu_+}) \begin{pmatrix}
		0 & 0 & \sqrt{2}a_2 \\
		0 & 0 & \sqrt{2}b_2 \\
		2\sqrt{\frac{3}{10}}c_1 & 2\sqrt{\frac{3}{10}} c_1 &0
	\end{pmatrix}\,,
\end{align*}
where we want to remind that $a_{1,2}$ is equal to $b_{1,2}$ up to multiplicative constant. Moreover, we have
\begin{align*}
	\bar{D}(\ve):= -|\xi|^{\tilde{\zeta}}\ve^{\zeta-\tilde{\zeta}} \begin{pmatrix} 
	1 & 0 \\ 
	 0 &1
	\end{pmatrix}
\end{align*}
where we defined
\begin{equation}\label{d:zeta}
\begin{split}
	&\zeta:= \begin{cases}2 \quad &\text{if } \alpha > 6+\beta\,, \\ \frac{-4+\alpha+\beta}{1+\beta} \quad &\text{if } \alpha < 6+\beta\,,\end{cases} \\
	&\tilde{\zeta}:= \begin{cases}2 \quad &\text{if } \alpha > 4+\beta\,, \\ \frac{-2+\alpha+\beta}{1+\beta} \quad &\text{if } \alpha < 6+\beta\,. \end{cases}
\end{split}
\end{equation}
In total, we obtain in the limit $\ve \to 0$ from the system \eqref{systat0}, denoting $(\hat{\rho},\hat{m},\hat{\theta}):=\lim_{\ve \to 0}(\rho_{\hat{h}_\ve}, m_{\hat{h}_\ve}, \theta_{\hat{h}_\ve})$:
\begin{itemize}
\item For the \emph{mass} and \emph{energy} of the system
\begin{align*}
	&\pa_t  \hat{\rho} = - k_{1,1} |\xi|^{\zeta}  \hat{\rho} - k_ {1,2} |\xi|^{\zeta}  \hat{\theta} -i |\xi| a_2\sqrt{2} \im(\bar{\mu_+}) \lim_{\ve \to 0}\left(\frac{\ve}{\gamma(\ve)} (\sigma \cdot m_{\hat{h}_\ve} )\right)\,,\\
	&\pa_t  \hat{\theta} = - k_{2,1} |\xi|^{\zeta}  \hat{\rho} - k_ {2,2} |\xi|^{\zeta}  \hat{\theta} -i |\xi| b_2\sqrt{2} \im(\bar{\mu_+}) \lim_{\ve \to 0}\left(\frac{\ve}{\gamma(\ve)} (\sigma \cdot m_{\hat{h}_\ve} )\right)\,,
\end{align*}
where we want to point out that the limit $\lim_{\ve \to 0}\left(\frac{\ve}{\gamma(\ve)}(\sigma \cdot m_{\hat{h}_\ve} )\right)$ is of order 1 due to the incompressibility condition \eqref{c:incomp}. We further observe that since $a_2=-b_2$ (up to a multiplicative constant) we have also have $k_{1,1}=-k_{1,2}$  and $k_{1,2}=-k_{2,2}$. Remembering the Boussinesq - relation \eqref{c:Bouss} we obtain after taking a proper linear combinations of the above two equations
\begin{align*}
	&\pa_t  \hat{\rho} = -i |\xi|\sqrt{2} \im(\bar{\mu_+}) \lim_{\ve \to 0}\left(\frac{\ve}{\gamma(\ve)} (\sigma \cdot m_{\hat{h}_\ve} )\right)\,,\\
	&\pa_t  \hat{\theta} = \kappa |\xi|^{\zeta}  \hat{\theta}\,.
\end{align*}
After inverse Fouriertransformation the first equation yields the incompressibility condition in the limit, while the second equation is of the form
\begin{equation*}
\begin{split}
	&\pa_t  \theta= \Delta \theta\,, \quad \text{if } \alpha > 6 +\beta \,, \\
	&\pa_t \theta = \Delta^{\frac{\zeta}{2}} \theta\,, \quad \text{if } \alpha < 6 +\beta \,,
\end{split}
\end{equation*}
where the fraction $\zeta$ is given by the fraction \eqref{d:zeta}.
\item For the \emph{momentum} we obtain
	\begin{align*}
		\pa_t (\sigma \cdot \hat{m})  = -|\xi|^{\zeta} c_1 \re(\tilde{\mu_+})\sqrt{2} \lim_{\ve \to 0} (\sigma \cdot m_{\hat{h}_\ve} ) +i\im(\bar{\mu_+}) c_1 2\sqrt{\frac{3}{10}} \lim_{\ve \to 0}  \frac{\ve}{\gamma(\ve)} |\xi|\left(\rho_{\hat{h}_\ve}  + \theta_{\hat{h}_\ve} \right)\,,
	\end{align*}
	where the first term is 0 due to the incompressibility condition \eqref{c:incomp}, and the second term is of order 1 due to the Boussinesq- condition \eqref{c:Bouss}, which will result in a pressure gradient in the limit. For the directions orthogonal to $\sigma$ we have each
	\begin{align*}
		\pa_t \left( \overrightarrow{C}^t(0) \cdot \hat{m}\right) =  \begin{cases} -|\xi|^2 \left( \overrightarrow{C}^t(0) \cdot \hat{m}\right)\quad &\text{if } \alpha >6 +\beta\,,  \\ 0  \quad &\text{if } \alpha < 6 +\beta\,,\end{cases} \quad t \in \{1,2\}\,,
	\end{align*}
	which is due to the fact that the transversal waves scale slower in the fractional regimen, hence in the regime $\alpha<6+\beta$ the $\ve^{\zeta-\tilde{\zeta}} \longrightarrow 0$, as $\ve \to 0$. In total we obtain for the momentum after inverse Fouriertransform
	\begin{align*}
		&\pa_t m= \nabla \cdot (A \nabla m) + \nabla p\, \quad &\text{if } \alpha > 6 +\beta \,, \\
		&\pa_tm =  \nabla p \, \quad &\text{if } \alpha < 6 +\beta \,,
	\end{align*}
	where the diffusion matrix $A$ can be computed from the two directions $\overrightarrow{C}^t(0)$, $t \in \{1,2\}$ and the pressure term occurs in the limit $\im(\bar{\mu_+}) c_12\sqrt{\frac{3}{10}} \hat{p}:=\lim_{\ve \to 0}\frac{\ve}{\gamma(\ve)}\left(\rho_{\hat{h}_\ve}  + \theta_{\hat{h}_\ve} \right)$.
\end{itemize}




\bibliographystyle{abbrv}
\bibliography{biblio}

\vspace{0.3cm}
\end{document}